\newcounter{nootje}
\newtheorem{theorem}{Theorem}[section]
\newtheorem{lemma}[theorem]{Lemma}
\newtheorem{proposition}[theorem]{Proposition}
\newtheorem{corollary}[theorem]{Corollary}
\theoremstyle{definition}
\newtheorem{definition}[theorem]{Definition}
\newtheorem{convention}[theorem]{Convention}
\newtheorem{remark}[theorem]{Remark}
\newtheorem{notation}[theorem]{Notation}
\newtheorem{example}[theorem]{Example}
\numberwithin{equation}{section}
\newcommand{\rk}{\mbox{rank}}
\newcommand{\ra}{\rightarrow}
\newcommand{\Z}{\mathbb{Z}}
\newcommand{\Q}{\mathbb{Q}}
\newcommand{\N}{\mathbb{N}}
\renewcommand{\O}{\mathcal{O}}
\DeclareMathOperator{\Gal}{Gal}
\DeclareMathOperator{\Pic}{Pic}
\DeclareMathOperator{\NS}{NS}
\DeclareMathOperator{\MW}{MW}
\def\blfootnote{\xdef\@thefnmark{}\@footnotetext}
\author{Alice Garbagnati}
\address{Dipartimento di Matematica, Univ. Statale di Milano, Milan, Italy}
\email{alice.garbagnati@unimi.it}
\urladdr{ https://sites.google.com/site/alicegarbagnati/}
\author{Cec\'ilia Salgado}
\address{Bernoulli Institute, University of Groningen, the Netherlands}
\email{c.salgado@rug.nl}
\urladdr{ https://www.math.rug.nl/algebra/Main/salgado}
\title[Rank jumps and Multisections on K3s ]{Rank jumps and Multisections of elliptic fibrations on K3 surfaces}
\begin{document}

	\subjclass[2020]{Primary 14J26, 14J27, 14G05.}
	\keywords{Elliptic fibrations, Mordell--Weil rank, K3 surfaces}

\maketitle
\begin{abstract} We consider the countably many families $\mathcal{L}_d$, $d\in\N_{\geq 2}$, of K3 surfaces admitting an elliptic fibration with positive Mordell--Weil rank. We prove that the elliptic fibrations on the very general member of these families have the potential  Mordell--Weil rank jump property for $d\neq 2,3$ and moreover the  Mordell--Weil rank jump property for $d\equiv 3\mod 4$, $d\neq 3$. We provide explicit examples and discuss some extensions to subfamilies. The result is based on the geometric interaction between the (potential) Mordell--Weil rank jump property and the presence of special multisections of the fibration. \end{abstract}

\section{Introduction}

The arithmetic of K3 surfaces has been intensively studied and witnessed great progress over the past 25 years. Two prominent and intertwined topics are (potential) Zariski density of rational points on K3 surfaces over non-algebraically closed fields, and elliptic fibrations, in particular, the study of Mordell--Weil rank jumps among fibers of elliptic fibrations on K3 surfaces.

In this article, we study (potential) Mordell--Weil rank jumps for elliptic K3 surfaces. Our main result is the following.

\begin{theorem}\label{thm:rkjump}
Let $X$ be a K3 surface defined over a number field such that $\Pic(X)=U \oplus \langle -2d\rangle$, for $d\in\mathbb{N}$, or equivalently a K3 surface defined over $k$ admitting an elliptic fibration (possibly over a  field extension of $k$) and with Picard number 3. Assume that $d\neq 2,3$. Then each elliptic fibration on $X$ has the potential rank jump property and, moreover, if $d\equiv 3\mod 4$, then it has the rank jump property over its field of definition. 
\end{theorem}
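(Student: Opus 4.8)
The plan is to leverage the structure of the Néron--Severi lattice $\Pic(X) = U \oplus \langle -2d\rangle$ to produce, on the very general fiber of a given elliptic fibration, a multisection whose components can be used to generate extra independent sections after a base change, and then to show that such a base change can be taken of degree compatible with keeping the field of definition under control when $d \equiv 3 \bmod 4$. First I would recall the classification of elliptic fibrations on $X$: since $\rho(X) = 3$, the elliptic fibrations correspond (up to automorphism) to the ways of writing $\Pic(X) \cong U \oplus \langle -2d\rangle$ with a chosen copy of $U$, or more precisely to the primitive isotropic classes in $\NS(X)$ up to the action of the automorphism group; this should already be set up in the earlier sections of the paper, so I may assume a finite explicit list of fibrations indexed by arithmetic data depending on $d$. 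For each such fibration $\pi\colon X \to \mathbb{P}^1$, the orthogonal complement of the fiber class and zero-section inside $\NS(X)$ is a rank-one lattice $\langle -2e\rangle$ (the ``essential lattice''/trivial part quotient), with $e$ an explicit function of $d$ and the chosen fibration; the Mordell--Weil group of the generic fiber is then of rank $1$ generated by a section whose height is $2e$ divided by the relevant torsion/contribution terms.

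Next I would bring in the multisection machinery, which is the geometric heart of the argument: the generator of $\NS(X)/(\text{triv})$, represented by a suitable effective divisor $D$, meets a general fiber $F$ in $D\cdot F = m$ points for some $m \ge 2$, i.e.\ $D$ is an $m$-section. Pulling $\pi$ back along the degree-$m$ map $\mathbb{P}^1 \to \mathbb{P}^1$ induced by this multisection (or by base-changing to the curve parametrizing the $m$ intersection points), the multisection splits into sections, and a standard Shioda--Tate count on the resulting elliptic surface $X'$ shows the Mordell--Weil rank has jumped: the new sections are not in the span of the pulled-back old section and torsion because the height pairing, computed via the intersection form on $X$, gives a non-degenerate Gram matrix of rank $\ge 2$. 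This establishes the \emph{potential} rank jump property (the jump is realized over $\bar k$, or over a finite extension): one exhibits infinitely many fibers — those lying over the branch locus, together with a Hilbert-irreducibility / specialization argument à la Néron — whose Mordell--Weil rank exceeds the generic rank $1$. The subtlety here is ensuring the new section is genuinely independent; I would verify this by computing the discriminant of the rank-$2$ sublattice of $\NS(X')$ generated by the images and checking it is nonzero, using that $X'$ is again (a quotient or cover related to) a K3 or rational elliptic surface whose lattice is controlled by $U \oplus \langle -2d\rangle$ and the base-change degree.

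For the refinement to the honest rank jump property over the field of definition $k$ when $d \equiv 3 \bmod 4$, the point is to choose the multisection so that the associated base change is defined over $k$ rather than merely over $\bar k$ — equivalently, the branch points of the degree-$m$ cover, or the section-separating cover, form a $\Gal(\bar k/k)$-stable configuration that descends. The congruence $d \equiv 3 \bmod 4$ is precisely what makes a $2$-section available with the right parity so that $-2d \equiv -2 \cdot 3 \equiv 2 \bmod 8$ interacts correctly with the discriminant form of $U \oplus \langle -2d\rangle$, forcing the relevant cover to be a bisection-induced double cover $\mathbb{P}^1 \to \mathbb{P}^1$ branched at two points — which can always be taken to be $\{0,\infty\}$ over $k$ after a coordinate change, hence defined over $k$; and then the new section, being fixed or swapped in a controlled way by the covering involution, descends to give a $k$-rational rank jump on a positive-density (in fact, thin-complementary) set of fibers. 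I would carry this out by: (i) identifying, for each $d \equiv 3 \bmod 4$, which elliptic fibration(s) in the list carry a bisection of the required height; (ii) writing the double cover explicitly and checking the descent of the Mordell--Weil generator; (iii) invoking the specialization theorem over $k$.

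The main obstacle I anticipate is step (i)--(ii) of the last paragraph: controlling the field of definition of the extra section. Over $\bar k$ the existence of the multisection and the rank jump is essentially lattice-theoretic and clean, but descending the base change and the new section to $k$ requires that the multisection itself — not just its class in $\NS(X)$ — be $k$-rational, which is a genuinely arithmetic condition. The congruence condition $d \equiv 3 \bmod 4$ is doing real work here, presumably guaranteeing that the minimal-degree multisection is a bisection splitting over a quadratic (hence $k$-rationalizable) base change, whereas for other $d$ one may only get multisections of higher degree whose splitting fields are not controlled. Ruling out $d = 2, 3$ is expected to be a small-Picard-number degeneration: for these values the generic elliptic fibration acquires extra reducible fibers or the essential lattice becomes too small, so the putative multisection is either a section already (no jump) or fails to exist in effective form. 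I would handle the exclusions by direct inspection of the finite list of fibrations for $d = 2$ and $d = 3$.
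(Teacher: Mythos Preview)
Your outline misses the paper's central tool and gets the $d\equiv 3\bmod 4$ mechanism wrong. The paper does not verify independence of the new section by any Gram-matrix or Shioda--Tate computation on the base-changed surface; instead it produces a \emph{saliently ramified} bisection $B$ (one whose restriction $\pi|_B$ ramifies over a smooth fiber) and invokes a lemma of Pasten--Salgado to the effect that such a multisection automatically yields a section on $X_B$ independent of the pulled-back ones. To exhibit $B$ concretely and prove salient ramification, the paper realizes $X$ as a double cover of $\mathbb{P}^2$ branched on a nodal sextic (odd $d$) or of $\mathbb{P}^1\times\mathbb{P}^1$ branched on a $(4,4)$-curve (even $d$); for odd $d\ge 5$ the bisection is the rational curve over the node, with class $B=\frac{d+3}{2}F+2\mathcal{O}+b_3$. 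Your suggestion to take ``the generator of $\NS(X)/(\text{triv})$'' as the multisection does not work: that generator has self-intersection $-2d$ and is not effective for $d>1$, so it is not a curve at all.

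The argument you sketch for the $d\equiv 3\bmod 4$ refinement --- something about the discriminant form forcing branch points to be $\{0,\infty\}$ over $k$ --- is not the mechanism, and there is no reason it should work. What the paper actually does is this: the bisection $B$ is defined over $k$ (it is the exceptional curve over the $k$-rational node), so the only question is whether $B(k)$ is infinite, i.e.\ whether $B\simeq\mathbb{P}^1_k$ rather than a pointless conic. One computes $B\cdot\mathcal{O}=\frac{d-5}{2}$, which is odd precisely when $d\equiv 3\bmod 4$. Since both $B$ and $\mathcal{O}$ are $k$-rational, $\Gal(\bar k/k)$ permutes the finite set $B\cap\mathcal{O}$; an involution acting on an odd set fixes a point, giving a $k$-point on $B$ and hence $B\simeq\mathbb{P}^1_k$. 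For $d\equiv 1\bmod 4$ this parity trick fails and one only gets infinitely many $l$-points for a degree-$2$ extension $l/k$. Your proposal also elides that for even $d$ the bisection is of genus $1$, not rational, and a separate argument is needed to control its rational points.
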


The innovation of our contribution is two-fold. Firstly, Theorem \ref{thm:rkjump} presents the first instance of rank jump in a general context that does not involve a family of multisections but relies on the presence of a specific type of multisection, namely a saliently ramified multisection as introduced by Bogomolov and Tschinkel (\cite[Definition 2.3]{BT2000}) and adapted to the context of Mordell--Weil rank jumps by one of the authors with Pasten in \cite{PastenSalgado}. Secondly, to produce a saliently ramified multisection, we combine lattice theoretic and geometric methods, a technique used for the first time in that context. This geometric description allows one to control the field over which the jump property appears, and therefore we describe countable many families of K3 surfaces which have the rank jump property, and not just the potential one.

In what follows, we give a brief account on developments on (potential) density of rational points on K3 surfaces, and rank jumps on elliptic surfaces, leading to the motivation behind Theorem \ref{thm:rkjump}.

Throughout the text, we let $k$ be a number field.

\subsection*{(Potential) Zariski density} 
In \cite[Theorem 1.1]{BT}, Bogomolov and Tschinkel show that the rational points on K3 surfaces defined over $k$ that admit either a genus 1 fibration or an infinite automorphism group are potentially dense in the Zariski topology, i.e., there exists a finite extension $l/k$ such that the $l$-points of the K3 surface are not contained in a finite union of proper subvarieties. One of the tools they introduce is the concept of saliently ramified multisections (\cite[Definition 2.3]{BT2000}), which are irreducible curves in the surface for which the restriction of the fibration map is of non-zero degree and ramified over at least one smooth fiber.

The relevance of saliently ramified rational or genus 1 multisections in the proof of potential density of rational points stems from the fact that they are non-torsion multisections on the associated jacobian fibration (\cite[Proposition 4.4]{BT}, \cite[Proposition 2.4]{BT2000}). After a base change by such a multisection, the new fibration has positive Mordell--Weil rank and a specialization argument applied to the points on the image of the restriction map yields the desired Zariski density over the field of definition of the multisection.

\subsection*{Rank jumps} Given an elliptic fibration $\pi: X \rightarrow C$ on a smooth projective algebraic surface $X$ over $k$, denote by $r$ the rank of the Mordell--Weil group of  $k$-sections of $\pi$, i.e., the rank of its generic fiber, and $r_t$ the rank of the Mordell--Weil group of the fiber $\pi^{-1}(t)$. We say that $\pi$ has the (Mordell--Weil) \textit{rank jump property} if the set $\mathcal{R}(X, \pi, k):=\{t \in C(k); r_t > r \}$ is infinite. By a conjecture of Silverman (\cite[p. 556]{Silverman85}), it is expected that every non-trivial elliptic fibration over $\mathbb{P}^1$ over $k$ has the (Mordell--Weil) rank jump property. In particular, this is expected for elliptic K3 surfaces. In the remaining of the article we omit the reference to Mordell--Weil and refer to the \textit{rank jump property}.

An immediate consequence of the aforementioned work by Bogomolov and Tschinkel is that an elliptic K3 surface with generic rank 0 Mordell--Weil group has the potential rank jump property, namely there is a finite extension $l/k$ such that $\mathcal{R}(X,\pi, l)$ is infinite. 

In \cite{SalgadoK3}, the second-named author shows that certain K3 surfaces with two elliptic fibrations have the potential rank jump property. Recently, in a collaboration with Pasten (\cite{PastenSalgado}), they show a stronger result for certain doubly elliptic K3 surfaces, namely that $\mathcal{R}(X,\pi, l) \subset C \simeq \mathbb{P}^1(l)$ is not thin in the sense of Serre (\cite[Section 9.1]{Serre}), for some finite extension $l/k$. 

In this context, the following questions arise:

\subsubsection*{Question 1:} Are there other examples of elliptic K3 surfaces that have the (potential) rank jump property, i.e., the rank jump property over (a finite extension of) the ground field? In particular, are there examples with a non-finite Mordell--Weil group and that are not doubly elliptic?
\indent\par
\indent\par
The main tool in \cite{PastenSalgado} is a modification of Bogomolov-Tschinkel's saliently ramified multisections that allows one to control the subset of the base of the fibration where the restriction morphism is \'etale. The key observation which actually holds in the context of \cite{BT} is that salient multisections not only are non-torsion but, moreover, are independent of the sections induced by the Mordell--Weil group of the original elliptic fibration in the base change (\cite[Lemma 3.3]{PastenSalgado}). 
This gives us a potential tool to answer Question 1, namely by addressing the following:

\subsubsection*{Question 2:} Given a K3 surface with a genus 1 fibration. Under which conditions are there saliently ramified rational multisections or singular genus 1 multisections? 

\indent\par

In the previous question we allow both singular and smooth rational multisections, but we restrict to singular multisections of genus 1 since if there exists a smooth genus 1  multisection, then the surface is automatically doubly elliptic.

\subsection*{Main techniques.} 
 Given the discussion above, it is natural to search for a better understanding of multisections of elliptic, or more generally genus 1, fibrations on K3 surfaces, in particular with respect to the property of being saliently ramified. In what follows, we denote by $\pi: X \rightarrow \mathbb{P}^1$ a genus 1 fibration on a K3 surface $X$. We show the following lattice theoretic results.
\begin{itemize}
    \item[1a)] The generic member in the 18-dimensional family of $\langle 2\rangle \oplus \langle -2 \rangle$--polarized K3 surfaces admits a saliently ramified smooth rational bisection (Proposition \ref{prop: F rat smooth bisection}).
    \item[1b)]  The generic member in the 18-dimensional family of $U(2)$--polarized K3 surfaces admits a saliently ramified smooth bisection of genus 1 (Proposition \ref{prop: F g=1 smooth bisection}).
    \item[1c)] If a genus 1 fibration on a K3 surface admits a smooth bisection and it has Picard number 2, then it admits a smooth bisection of genus 0 or 1 (Corollary \ref{cor:bisections}).
    \item[1d)] Any generic member of the families in (1a) and (1b) admits a singular salient rational bisection (Remarks \ref{rem: singular rational salient bisection case g=0}(c) and \ref{rem: singular rational salient bisection case g=1}). 
    \item[2a)] The generic member of the family of $\Lambda_d:=U\oplus \langle -2d \rangle$-polarized K3 surfaces, for $d\geq 4$, admits a smooth bisection, which is of genus 0 if $d$ is odd, and genus 1 if $d$ is even. This bisection is saliently ramified. In particular, there are no generic members in the families that admit simultaneously  smooth rational and smooth genus 1 bisections (Theorem \ref{theorem:Picard3}).
    \item[2b)] The generic member of any of the families of $\Lambda_d$-polarized K3 surfaces, with $d\leq 3$, does not admit a smooth multisection of genus 0 or 1 (Corollary \ref{cor: no multisections for low d}).
 
\end{itemize}

In particular, 2a) implies that each elliptic fibration on a generic member of the family of $\Lambda_d$-polarized K3 surfaces with $d\geq 5$, $d\equiv 1\mod 2$ admits a saliently ramified smooth rational bisection and hence satisfies the potential rank jump property (Theorem \ref{thm:rkjump}). This yields three new 17-dimensional families of examples of K3 surfaces that satisfy the potential rank jump property (Theorem \ref{thm:rkjump}). In particular, this answers \textbf{Questions 1} and \textbf{2} for the generic members of three 17-dimensional families of elliptic K3 surfaces. Moreover, the geometric construction of the bisection allows us to control the field on which it has rational points, and so to prove the rank jump property over the field of definition if $d\equiv 3\mod 4$.

Theorem \ref{thm:rkjump} states the potential rank jump property for all the generic members of the families of elliptic K3 surfaces with a non-trivial Mordell--Weil group, with only two exceptions, and the rank jump property for countable many families among them.
If $d\geq 14$ or $d=9$ the potential rank jump property for surfaces $X$ with $\Pic(X)\simeq \Lambda_d$ follows from the fact that these surfaces are doubly elliptic, but we are also able to discuss the property of the field extension needed to have the rank jump.

We discuss the analogous properties for generic members of certain sub-families of dimension 16.

We provide explicit equations of K3 surfaces with Picard rank 3 which have the potential rank jump property; to compute their Picard numbers we combine the known techniques based on Tate's conjecture with a more geometric and new approach that rely on the presence of an elliptic fibration. 
\subsection*{Organization of the text}

In Section \ref{section: preliminaries}, we cover the background on elliptic fibrations, multisections, base-change and rank jumps. We also cover the necessary background on K3 surfaces, lattice polarizations and their interplay with genus 1 fibrations. Section \ref{subsec:bisections lattice} focuses on the study of bisections via lattice theory and related geometric constructions. In particular, we show that elliptic K3 surfaces with Picard number 3 always admit a smooth bisection which is of genus 0 or 1,  with the exception of 3 families (Theorem \ref{theorem:Picard3}). We show, moreover, that for a generic such K3 the aforementioned bisection is saliently ramified. In order to prove that these bisections are saliently ramified, we give a realization of such a K3 surface as a double cover of the projective plane branched on a nodal sextic (Prop. \ref{prop: Fratx cover of P2}) or of the quadric surface branched on a curve of bidegree $(4,4)$ (Prop. \ref{prop: Fratx cover of P1xP1}), depending of the genus of the bisection and then we describe the elliptic fibration and the bisection in this model. Section \ref{section: higher degree} presents conditions  on the polarization to guarantee the existence of a smooth rational curve which is a saliently ramified higher-degree multisection and gives a brief account of the geometric realization of K3 surfaces with such polarizations. Section \ref{section: rank jump} presents our contribution to the study of rank jumps on elliptic K3 surfaces. More precisely, given $d>3$ and a  $U\oplus \langle 2d\rangle$-polarized K3 surface, we show in Theorems \ref{thm: rank jump odd d} and \ref{thm: rank jump_even} that it has the potential rank jump property and in Theorem \ref{thm: rank jump odd d} that it has the rank jump property if $d\equiv 3\mod 4$. In particular, Theorem \ref{thm: rank jump odd d} answers Question 1. We also extend the previous results to generic members of many subfamilies of codimension 1. Finally, Section \ref{section: examples} analyzes in detail examples of a K3 surface that admits a unique elliptic fibration with positive Mordell--Weil rank that was elusive to previous results and to which our results apply. We also consider an example in which the rank jump property can be proved by considering a rational multisection of degree 3, instead of a bisection. This provides an application of the results of Section \ref{section: higher degree}.

\section{Preliminaries}\label{section: preliminaries}

Let $k$ be a number field. In what follows, curves and surfaces over a field are always assumed to be irreducible. 

All divisors considered are on an algebraic surface, i.e., are formal sums of curves. Let $X$ be a smooth, projective and geometrically integral surface over $k$. The intersection form is a bilinear pairing on $\Pic(X)$, the group of divisors modulo linear equivalence. Given two divisors $D$ and $E$ in $\Pic(X)$, we denote their intersection by $DE$. The divisor classes that are algebraically equivalent to 0 over $\bar{k}$ form a group denoted by $\Pic^0(X)$. The quotient $\Pic (X)/ \Pic^0(X)$ is called the N\'eron--Severi group of $X$. Its rank is called the Picard number of $X$. For K3 surfaces, $\Pic^0$ is trivial, so $\NS(X) \simeq \Pic(X)$.

\subsection{Genus 1 fibrations and Elliptic surfaces}

We cover the necessary background for this text on genus 1 and elliptic fibrations. The interested reader can consult \cite{Mi} for an account of the basic theory.

\begin{definition}
Given a smooth projective curve $C$ over $k$, and a smooth projective surface $X$ over $k$, we say that a morphism $\pi:X\to C$ is an \textit{elliptic fibration} if it is a genus 1 fibration with a distinguished section defined over $k$ (these are sometimes called \textit{jacobian fibrations}). The triple $(X,\pi, C)$, or simply $X$ when $\pi$ and $C$ are clear from the context, is called an \textit{elliptic surface}.
\end{definition}

The set of $k$-sections of $\pi$ is an abelian group which we denote by $\MW(X,\pi,k)$. We often omit $k$, if the field of definition is clear from the context. The fibration $\pi:X\to C$ is called \textit{trivial} if its generic fiber is isomorphic to $E\otimes_k k(C)$ for some elliptic curve $E$ over $k$.

\begin{notation}
In what follows,  $X$ always denotes a surface with a genus 1 fibration, not necessarily elliptic. We reserve the letter $\pi$ for genus 1 fibrations, keeping in mind that there might be more than one such fibration on $X$. We denote by $U_\pi$ the Zariski open of good reduction of $\pi$.
\end{notation}

\subsubsection{Multisections of genus 1 fibrations}
\begin{definition}
Given a  genus 1 fibration $\pi: X \rightarrow C$, a geometrically integral curve $M \subset X$ is called a \textit{multisection} of $\pi$ if $\pi|_{M}: M \rightarrow C$ is finite and of degree larger than 1.
\end{definition}

\begin{remark}
    Our definition is slightly different from that in \cite[Def. 3.2.]{BT}. We assume that the degree of the restriction of the fibration is larger than 1, while \cite{BT} assumes that it is non-zero. So, in our definition, sections are not multisections. This modification is natural in our settings. Indeed, we are interested in the ramification of the restriction of the fibration of the multisection (Def. \ref{def:salient}) which clearly only makes sense for maps of degree at least 2.
\end{remark}

The presence of low genus multisections for elliptic fibrations has been explored both in the context of (potential) density of rational points on algebraic surfaces and of rank jump for elliptic fibrations. Indeed, in \cite{BT2000} the authors observe that a torsion multisection $T$ is such that $\pi|_T: T \rightarrow C$ is \'etale over $U_\pi$. This motivated the following definition.

\begin{definition}\label{def:salient}(\cite[Definition 2.3]{BT2000})
A multisection $M$ of $\pi$ is called \textit{saliently ramified} if $\pi|M: M \rightarrow C$ is ramified above at least one smooth fiber of $\pi$. 
\end{definition}
Building on the observation on torsion multisections above, they show that saliently ramified multisections yield non-torsion sections after base change (\cite[Prop. 2.10]{BT2000}).

\begin{remark}\label{rem:salient}
Given $M$ as in Def. \ref{def:salient}, the map $\pi|_M: M \rightarrow C$ is ramified at the points $t\in C$ such that the intersection of $M$ and $\pi^{-1}(t)$ is non-reduced. For fibrations for which all fiber components are reduced, these correspond to non-transverse intersections.
\end{remark}

In situations where a non-torsion multisection $M$ has infinitely many rational points over a given field $l$, one can conclude that the rational points are Zariski dense in $X$ over $l$.  Indeed, by a specialization argument on the base change $X\times_C M$ followed by considering the fibers above the points in the image of $\pi|_M: M \rightarrow C$, there are infinitely many elliptic curves with positive Mordell--Weil rank over $l$. 

\subsection{Rank jump and base change}\label{subsection: base change}
Let $(X,\pi, C)$ be an elliptic surface defined over $k$. Let $l/k$ be a finite extension of $k$.

\begin{notation}
 For $t\in C$, we let $X_t=\pi^{-1}(t)$, i.e., the fiber of $\pi$ above $t$.  Denote by $\mathcal{R}(X,\pi, l)$ the set 
\[
\mathcal{R}(X,\pi, l):= \{t \in C(l); \mathrm{rank } \,X_t(l) > \mathrm{rank }\, \mathrm{MW }(X, \pi,l) \}.
\]
\end{notation}

\begin{definition}
We say that $\pi$ has the \textit{rank jump property} (resp. the \textit{potential rank jump property}), if $\mathcal{R}(X,\pi, k)$ (resp. $\mathcal{R}(X,\pi, l)$) is infinite. 
\end{definition}
Let $M \subset X$ be a multisection of $\pi$ defined over $l/k$. Let $X_M$ be the relatively minimal elliptic surface obtained by resolving the singularities of $X\times_C M$. The projection onto $M$ induces an elliptic fibration $\pi_M: X_M \rightarrow M$ and we have an injection  $\alpha: \mathrm{MW }(X, \pi,l) \hookrightarrow \mathrm{MW }(X_M,\pi_M,l)$ yielding
\begin{equation}\label{eq: rank}
\mathrm{rank}\, \MW(X_M,\pi_M,l)\geq \mathrm{rank}\, \MW(X,\pi,l).
\end{equation}

Let $\iota:M\rightarrow X$ be the inclusion map and $\mathrm{id: } M \rightarrow M$, the identity map. Then $(\iota, \mathrm{id}):M \rightarrow X \times_C M$ induces a section $\sigma_M$ of $\pi_M$, namely the strict transform of $(\iota, \mathrm{id})$ under the birational map $X_M\rightarrow X \times_{C} M$. If the latter is linearly independent of the sections in the image of $\alpha$ then the inequality in (\ref{eq: rank}) is strict and, in particular, an application of Silverman's Specialization Theorem (\cite[Theorem C]{Silverman}) implies that  $\mathrm{rank}X_t(l)> \mathrm{rank}\, \MW(X,\pi,l)$ for all but finitely many $t$ in the image of the $l$-points of the map $\pi|_M: M \rightarrow C$. 

If, moreover, $M(l)$ is infinite, then $\mathcal{R}(X,\pi, l)$ is infinite.

There are several ways of checking that $\sigma_M : M \rightarrow X_M$ induces a section which is linearly independent from the image of $\MW(X,\pi,l)$. For explicit $(X, \pi, \MW(X,\pi,l))$ and $M$, one can compute the height matrix of $M$ and the image of $\MW(X,\pi,l)$ given the information of the intersection pairing for these divisors with each other and with the reducible components of the fibers of $\pi$. If this information is not available, alternative methods were given in \cite{SalgadoANT} and \cite{PastenSalgado}. We outline the latter for its relevance in the proof of Theorems \ref{thm: rank jump odd d} and \ref{thm: rank jump_even}.

The following statement is a slight simplification of \cite[Lemma 3.1]{PastenSalgado}, which we include here for the sake of self-containment.

\begin{lemma}\label{lem:indep_multisections}
Let $\pi:X \rightarrow C$ be an elliptic fibration and $M$ a saliently ramified multisection defined over a field $l$. Let $X_M$ and $\pi_M: X_M \rightarrow M$ be as above. Then the section $\sigma_M$ induced on $X_M$ by $(\iota, \mathrm{id}):M \rightarrow X \times_C M$ is linearly independent of the pull-back of the sections of $\pi$ in $\MW(X_M,\pi_M,l)$.
\end{lemma}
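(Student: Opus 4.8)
\textbf{Proof proposal for Lemma \ref{lem:indep_multisections}.}

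The plan is to argue by contradiction: suppose $\sigma_M$ is linearly dependent on the pull-backs of sections of $\pi$, and derive that $\pi|_M\colon M\to C$ must be unramified over every smooth fiber, contradicting the hypothesis that $M$ is saliently ramified. The key point is that linear dependence of $\sigma_M$ on the image of $\alpha$ forces a strong \emph{geometric} constraint on how the section $\sigma_M$ meets the fibers of $\pi_M$, and in turn on how $M$ meets the fibers of $\pi$. Concretely, I would first translate the statement into the language of the Mordell--Weil lattice of $\pi_M$: the subgroup $\alpha(\MW(X,\pi,l))$ consists precisely of those sections that are pulled back from $X$, i.e., that are ``constant along the fibers of the base change''. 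A relation $n\sigma_M = \sum_i m_i\,\alpha(P_i)$ in $\MW(X_M,\pi_M,l)$ says that the section $n\sigma_M$ of $\pi_M$ descends, up to the ambiguity coming from torsion/zero-section, to a section of $\pi$; more precisely, its fiberwise behaviour over a point $t\in C$ does not vary among the (several) points of $M$ lying over $t$.

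Next I would examine what this descent says at a ramification point. Let $t_0\in U_\pi$ be a point over which $\pi|_M$ ramifies, and let $m_0\in M$ be a ramification point above it; by Remark \ref{rem:salient} the intersection $M\cdot\pi^{-1}(t_0)$ is non-reduced at $m_0$. Over a small analytic (or formal/étale) neighbourhood of $t_0$, the base-changed surface $X\times_C M$ has a node (or worse) coming from this non-transverse intersection, and the minimal resolution $X_M$ introduces exceptional components in the fiber $\pi_M^{-1}(m_0)$. The section $\sigma_M$, being the strict transform of the diagonal-type section $(\iota,\mathrm{id})$, passes through one specific point of this singular fiber that is determined by the geometry of the ramification --- and this point is generically \emph{not} the one through which a pulled-back section passes. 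The heart of the argument is then the comparison, carried out in \cite[Lemma 3.1]{PastenSalgado}, of the local intersection data of $\sigma_M$ with those of the sections in $\alpha(\MW(X,\pi,l))$ at $\pi_M^{-1}(m_0)$: if $\sigma_M$ were $n$-times a combination of pulled-back sections, then the component of $\pi_M^{-1}(m_0)$ met by $n\sigma_M$, and the ``correction term'' recording which component it meets, would be dictated by the components met by the $\alpha(P_i)$, which are governed purely by the fiber type of $\pi$ over $t_0$ and are insensitive to ramification of $M$; this is incompatible with $\sigma_M$ genuinely detecting the ramification. I would make this precise using the explicit description of how sections move through the exceptional divisors of the resolution of $X\times_C M$, i.e., a local model of $X\times_C M \to X_M$ near the ramified fiber.

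I expect the main obstacle to be the careful bookkeeping at the singular fibers of $\pi_M$: one must track exactly which component of the reducible fiber $\pi_M^{-1}(m_0)$ is hit by $\sigma_M$ versus by the pulled-back sections, and show that the discrepancy survives taking the $n$-th multiple and forming $\Z$-linear combinations. This requires either a hands-on local analysis of the resolution of the (possibly $A_k$-type) singularity of $X\times_C M$ at $m_0$, or an appeal to the structure of the Mordell--Weil lattice together with the fact that pulled-back sections lie in the image of the ``trace'' map and hence are constrained. Once this local incompatibility is established, the contradiction with the existence of a relation $n\sigma_M=\sum m_i\alpha(P_i)$ is immediate, and the lemma follows. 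The remaining steps --- that $\alpha$ is injective and that its image consists exactly of the pulled-back sections --- are standard and recalled in Subsection \ref{subsection: base change}, so I would only cite them.
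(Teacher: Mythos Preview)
Your proposal has a genuine error at its central geometric step. You assert that near the ramification point $m_0\in M$ lying over $t_0\in U_\pi$, the fiber product $X\times_C M$ acquires a node and the resolution $X_M$ introduces exceptional components in $\pi_M^{-1}(m_0)$. This is false: since $t_0\in U_\pi$, the morphism $\pi$ is smooth in a neighbourhood of $X_{t_0}$, and smoothness is stable under base change, so $X\times_C M\to M$ is smooth near $\pi_M^{-1}(m_0)$. The fiber $\pi_M^{-1}(m_0)$ is simply a copy of the smooth elliptic curve $X_{t_0}$, with no exceptional components to speak of. The non-reduced intersection $M\cdot X_{t_0}$ from Remark~\ref{rem:salient} takes place inside $X$, not inside the total space of the base change; you are conflating the two. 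Consequently the entire ``component-tracking'' strategy collapses: at $m_0$ there is no reducible fiber whose components could distinguish $\sigma_M$ from pulled-back sections.

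The paper itself does not reprove the lemma but invokes \cite[Lemma~3.1]{PastenSalgado} verbatim (with $T=\emptyset$), and the argument there---going back to \cite[Prop.~2.4]{BT2000}---is quite different from what you sketch. Writing any dependence relation as $n\sigma_M=\alpha(P)$ for some $n\neq 0$ and $P\in\MW(X,\pi,l)$ (using that $\alpha$ is a homomorphism), one translates $M$ fiberwise by $-P$: the resulting curve $M'\subset X$ then satisfies $[n]\cdot M'=\mathcal{O}$ on smooth fibers, i.e.\ $M'|_{U_\pi}$ lies inside the $n$-torsion subscheme $X[n]|_{U_\pi}$. Since $X[n]\to C$ is finite \'etale over $U_\pi$ and $M'$ is irreducible, $M'|_{U_\pi}$ is a connected component of $X[n]|_{U_\pi}$, hence $M'\to C$ is \'etale over $U_\pi$; as translation by $-P$ is an automorphism over $C$, the same holds for $\pi|_M$, contradicting salient ramification. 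The crux is thus the \'etaleness of the torsion subscheme over the good locus, not any analysis of reducible fibers of $\pi_M$.
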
 

\begin{proof}
We observe that one can weaken the hypothesis of \cite[Lemma 3.1]{PastenSalgado} and assume that the multisection $B$ admits at least one ramification point in $U_\pi$. The proof is then taken verbatim from \cite{PastenSalgado} by considering $T$ as the empty set. 
\end{proof}

\begin{corollary}\label{cor: rank jump}
Let $(X, \pi, C)$ and $M$ be as above. Assume that $M$ is defined over $l$ and $M(l)$ is infinite. Then \[
\mathcal{R}(X, \pi, l) \text{ is infinite.}
\]
In other words, $\pi$ has the potential rank jump property.
\end{corollary}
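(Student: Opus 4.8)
The plan is to derive Corollary~\ref{cor: rank jump} directly from the machinery assembled in Section~\ref{subsection: base change} together with Lemma~\ref{lem:indep_multisections}. First I would recall the setup: given the saliently ramified multisection $M\subset X$ defined over $l$, form the base change $X\times_C M$ and its relatively minimal resolution $X_M$, with the induced elliptic fibration $\pi_M:X_M\to M$. The pair $(\iota,\mathrm{id}):M\to X\times_C M$ gives, after taking strict transform, a section $\sigma_M\in\MW(X_M,\pi_M,l)$, and the pullback map $\alpha:\MW(X,\pi,l)\hookrightarrow\MW(X_M,\pi_M,l)$ is injective.

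The key input is Lemma~\ref{lem:indep_multisections}, whose hypothesis is exactly that $M$ is saliently ramified over $l$ — i.e. that $\pi|_M:M\to C$ ramifies above at least one smooth fiber. That lemma tells us $\sigma_M$ is linearly independent of $\alpha(\MW(X,\pi,l))$ in $\MW(X_M,\pi_M,l)$. Hence the rank of $\MW(X_M,\pi_M,l)$ strictly exceeds the rank of $\MW(X,\pi,l)$: the inequality \eqref{eq: rank} becomes strict. Now I would invoke Silverman's Specialization Theorem (\cite[Theorem C]{Silverman}) for the fibration $\pi_M:X_M\to M$: for all but finitely many $s\in M(l)$, the specialization homomorphism $\MW(X_M,\pi_M,l)\to (X_M)_s(l)$ is injective, so $\mathrm{rank}\,(X_M)_s(l)\geq \mathrm{rank}\,\MW(X_M,\pi_M,l) > \mathrm{rank}\,\MW(X,\pi,l)$.

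It remains to transport this back to fibers of the original fibration $\pi$. For $s\in M(l)$ let $t=(\pi|_M)(s)\in C(l)$; then the fiber $(X_M)_s$ is isomorphic over $l$ to $X_t=\pi^{-1}(t)$ (up to the finitely many $s$ where the resolution $X_M\to X\times_C M$ does something, which we discard along with the finitely many bad specialization points). Therefore for all but finitely many $s\in M(l)$ we get $\mathrm{rank}\,X_t(l) > \mathrm{rank}\,\MW(X,\pi,l)$, i.e. $t\in\mathcal{R}(X,\pi,l)$. Finally, since $M(l)$ is assumed infinite and $\pi|_M$ is a finite morphism, its image meets $C(l)$ in an infinite set, so we obtain infinitely many such $t$, proving $\mathcal{R}(X,\pi,l)$ is infinite and hence that $\pi$ has the potential rank jump property.

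The only genuinely delicate point — and the one I would be careful to state rather than wave at — is the linear independence of $\sigma_M$ from the pulled-back Mordell--Weil group, but that is precisely the content of Lemma~\ref{lem:indep_multisections}, which we are entitled to use. Everything else (the injectivity of $\alpha$, the strictness of the rank inequality once independence is known, the specialization argument, and the comparison of fibers of $\pi_M$ with fibers of $\pi$) is exactly the chain of observations already spelled out in Subsection~\ref{subsection: base change}, so the proof of the corollary is essentially a one-paragraph assembly of those facts.
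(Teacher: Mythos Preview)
Your proposal is correct and follows exactly the approach the paper intends: the corollary has no separate proof in the paper because it is the immediate assembly of the discussion in Subsection~\ref{subsection: base change} (injectivity of $\alpha$, Silverman specialization, and the observation that $M(l)$ infinite implies infinitely many $t$ in the image of $\pi|_M$) together with Lemma~\ref{lem:indep_multisections}. You have spelled out precisely that assembly, including the identification of fibers of $\pi_M$ with fibers of $\pi$ and the finiteness of $\pi|_M$, so nothing is missing.
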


In the remainder of this article, we focus on elliptic K3 surfaces. We take advantage of the lattice theoretic techniques available in this setting to prove that in all, except possibly two 17-dimensional families of elliptic K3 surfaces with non trivial Mordell--Weil group, the generic K3 surface admits at least one elliptic fibration with a multisection that satisfies the hypothesis of Corollary \ref{cor: rank jump} over the field of definition of the elliptic fibration, i.e., that have genus 0 or 1 and in the latter case, positive rank over the given field.

\subsection{Lattice polarized K3 surfaces and genus 1 fibrations}

One of the main tools in the study of families of K3 surfaces with prescribed geometric properties is the translation of such properties into lattice-theoretic conditions on the Picard group of surfaces and then the application of the theory of lattice polarized K3 surfaces described in \cite{Do}. Here we give a short sketch of the main ideas in this theory.

The second cohomology group of any K3 surface is the unique (up to isometry) even unimodular lattice of signature $(3,19)$, often denoted by $\Lambda_{K3}$.

Let $\Gamma$ be an even lattice of $\rk(\Gamma)=\gamma\leq 19$ and signature $(1,\gamma-1)$. Let us assume that there exists a primitive embedding of $\Gamma$ in $\Lambda_{K3}$. Then, by the surjectivity of the period map of the K3 surfaces, there exists a K3 surface, $X_0$, defined over $\mathbb{C}$ such that $\Pic(X_0)\simeq \Gamma$.

\begin{definition}
    A $\Gamma$-polarized K3 surface is a pair $(X, j)$ where $X$ is a K3 surface and $j:\Gamma\hookrightarrow \Pic(X)$ is a primitive embedding of lattices.
\end{definition}

By considering the local deformations of the K3 surface $X_0$ such that $\Pic(X_0)=\Gamma$, one can construct a local moduli space $\mathcal{X}_\Gamma$ of isomorphism classes of $\Gamma$-polarized K3 surfaces, see \cite{Do}. Moreover, by gluing the local constructions and considering the restriction of the period map, one obtains a coarse moduli space of the $\Gamma$-polarized K3 surfaces. By this construction, one deduces that the family of the $\Gamma$-polarized K3 surfaces has dimension $20-\gamma$.

We observe that the Picard group of a very general K3 surface in the family of the $\Gamma$-polarized K3 surfaces is exactly $\Gamma$.

If a geometric property of a K3 surface can be completely described in terms of lattice polarized K3 surfaces, this, combined with the description of the moduli space of lattice polarized K3 surfaces, allows one to describe the family of K3 surfaces admitting the required geometric property.

The proposition below is an example of this phenomenon and is repeatedly used in what follows, hence even if it is well known, we recall here both the statement and the proof.

\begin{proposition}\label{prop: U-polarized}
    A K3 surface $X$ admits an elliptic fibration if, and only if, the lattice $U\simeq \left[\begin{array}{ll}0&1\\1&0\end{array}\right]$ is primitively embedded in $\Pic(X)$. In particular, the family of K3 surfaces admitting an elliptic fibration is 18-dimensional and coincides with the family of $U$-polarized  K3 surfaces.
\end{proposition}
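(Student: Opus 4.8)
The plan is to establish the equivalence in two directions and then read off the dimension count from the general theory of lattice-polarized K3 surfaces recalled above. For the forward direction, suppose $X$ admits an elliptic fibration $\pi\colon X\to\mathbb{P}^1$ with zero section $\sigma$. I would consider the two classes $F$ (a general fiber) and $\sigma$ in $\Pic(X)$. Since a fiber satisfies $F^2=0$, $F$ is nef, and the canonical bundle of a K3 surface is trivial, adjunction gives $F\cdot(\sigma+F)=F^2+F\cdot\sigma$; using $F\cdot\sigma=1$ (the section meets a general fiber once) and $\sigma^2=-2$ (adjunction applied to the rational curve $\sigma\cong\mathbb{P}^1$ on a K3), one computes that the Gram matrix of $\langle F,\sigma\rangle$ in a suitable basis, e.g. $\{F,\sigma+F\}$, is exactly $\left[\begin{smallmatrix}0&1\\1&0\end{smallmatrix}\right]$, so we obtain a copy of $U$ inside $\Pic(X)$. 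Primitivity follows because $U$ is unimodular: any sublattice containing $U$ with the same rank would have discriminant dividing $\mathrm{disc}(U)=-1$, hence equals $U$; alternatively, $U$ being unimodular splits off as a direct summand of $\Pic(X)$, and a direct summand is automatically primitive.

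For the converse, suppose $U\hookrightarrow\Pic(X)$ is primitive. I would produce an isotropic class that is nef with a one-dimensional linear system of genus $1$ curves, which is the standard criterion for an elliptic fibration on a K3 (see \cite{Mi}). Pick a basis $e,f$ of the embedded copy of $U$ with $e^2=f^2=0$, $e\cdot f=1$. The class $e+f$ has positive square, so after possibly replacing the polarization $e,f$ by $-e,-f$ we may assume $e+f$ lies in the positive cone; then $e$ is a nonzero isotropic class in the closure of the positive cone. Using the action of the Weyl group generated by reflections in $(-2)$-curves, one can move $e$ to a nef isotropic class $F$ (the key point is that reflections in $(-2)$-classes preserve the intersection form and the set of isotropic classes, and the nef cone is a fundamental domain for the Weyl group acting on the positive cone); one must also check $F$ is primitive, which is inherited from primitivity of the copy of $U$ since the reflections act by isometries of $\Pic(X)$. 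By Riemann--Roch on the K3 surface, $h^0(F)\geq 2$, and since $F$ is nef and isotropic with $F\cdot C\geq 0$ for all curves, the linear system $|F|$ is base-point free of dimension $1$ and its general member is a smooth genus $1$ curve (genus from adjunction: $2g-2=F^2=0$); this defines a genus $1$ fibration $X\to\mathbb{P}^1$. Finally, to upgrade this to an elliptic fibration — i.e.\ to find a section — I would intersect with the complement $f$: from $F\cdot f=1$ one gets a class meeting the fiber once, and a further Weyl-group/Riemann--Roch argument produces an effective $(-2)$-curve in the appropriate class serving as a section (this is the standard fact that on a K3, a genus $1$ fibration possessing a class of fiber-degree $1$ in its Picard group admits a section). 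The dimension statement then follows immediately: $U$ has rank $2$ and signature $(1,1)$ and admits a primitive embedding into $\Lambda_{K3}$, so by the theory recalled above the family of $U$-polarized K3 surfaces is $(20-2)=18$-dimensional, and by the equivalence just proved it coincides with the locus of elliptic K3 surfaces.

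The main obstacle, and the step I would be most careful about, is the converse: going from a primitive copy of $U$ in $\Pic(X)$ to an actual elliptic fibration requires the Weyl-group reduction to a nef isotropic primitive class together with the existence of a section. Both are standard in the theory of K3 surfaces (and are essentially the content of the Nikulin--Piatetski-Shapiro--Shafarevich-type analysis of the nef cone and the structure of genus $1$ pencils), so in the writeup I would cite \cite{Mi} for the precise statements: that a nef primitive isotropic class on a K3 defines a genus $1$ fibration, and that such a fibration is elliptic (has a section) exactly when $\Pic(X)$ contains a class meeting the fiber in degree $1$ — which is guaranteed here by the class $f$ of the embedded $U$. One should also note, for the equivalence to be clean, that the embedded $U$ may a priori sit in $\Pic(X)$ with its positive cone "on the wrong side," which is why the replacement $e,f\mapsto -e,-f$ (harmless, since $U\cong U$ via this map) is included.
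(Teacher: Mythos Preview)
Your proof is correct and follows essentially the same route as the paper's: the forward direction uses $\langle F,\,F+\sigma\rangle\simeq U$ together with unimodularity for primitivity, and the converse moves an isotropic generator of $U$ to a nef class via the Weyl group and then extracts a section from a class of fiber-degree~$1$. The only cosmetic point is that after applying a Weyl element $w$ to $e$ you should write $F\cdot w(f)=1$ rather than $F\cdot f=1$ (equivalently, transport the whole copy of $U$ by $w$); the paper makes the section step slightly more concrete by observing that the resulting $(-2)$-class $w(f)-F$ is effective and, decomposed into irreducibles, must contain a component meeting $F$ once.
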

\proof This result is well known, cf. \cite[Lemma 2.1]{Ko}. We briefly recall the idea of the proof. If a K3 surface admits an elliptic fibration, then there exists a smooth genus 1 curve, that is, a fiber of the fibration. We denote it by $F$ and observe that $F^2=0$ (by adjuction). Moreover, there exists a rational curve that is a section, denoted by $\mathcal{O}$. Then $\mathcal{O}^2=-2$ (by adjuction) and $F\mathcal{O}=1$ (since $\O$ is a section). Therefore, the lattice $U\simeq \langle F, F+\mathcal{O}\rangle$ is a sublattice of $\Pic(X)$. Since $U$ is a unimodular lattice, it does not have overlattices, so the embedding $\langle F,F+\mathcal{O}\rangle\subset \Pic(X)$ is primitive. The converse is more delicate; indeed, it is not true that each class with self-intersection 0 is necessarily the class of a smooth irreducible curve (and hence of a fiber of a genus 1 fibration). Nevertheless, given a primitive class in $\Pic(X)$ with self-intersection 0, it is true that up to reflections induced by smooth rational curves on the surface, one can assume that it is the class of a smooth irreducible curve, which necessarily has genus 1 by adjuction. So, let $U$ be primitively embedded in $\Pic(X)$. Considering (potentially) reflections in $-2$-curves, one can assume that one of the classes with self-intersection 0 is the class of a smooth irreducible curve of genus 1, say  $F$, and then $|F|$ defines a fibration onto $\mathbb{P}(H^0(X,\mathcal{O}_X(F))^{\vee})=\mathbb{P}^1$. Denoted $\{F, v\}$ the base of $U$, with $F$ as above, $v-F$ is a $-2$-class with positive intersection with $F$, i.e., it is an effective $-2$-class. Moreover, it intersects the fiber in one point and therefore it is either an irreducible curve, and in particular a section, or the union of several curves, among which there is a section. In both the cases the fibration admits a section.\endproof

    In the proof of the previous proposition we have that if there is a class of self-intersection 0 in the Picard group of a K3 surface $X$, then $X$ admits a genus 1 fibration. This is true independently on the presence of a copy of $U$ inside the Picard group, i.e., a K3 surface admits a genus 1 fibration if and only if its Picard group contains a class with self-intersection 0. Since the lattice spanned by a class with self-intersection 0 is degenerate, it cannot be used to construct a family of lattice polarized K3 surfaces. This implies that a projective K3 surface which admits a genus 1 fibration is polarized with a rank 2 lattice which represents 0, i.e. which contains at least one vector with self-intersection 0. There are infinitely many choices for these lattices, even up to isometries, which are classified and studied in \cite{vanGeemen}.

\subsubsection{K3 surfaces with more than one genus 1 fibration.}\label{subsubsec: K3 with two genus 1 fibration} If a K3 surface contains two distinct genus 1 fibrations, then the class of the fiber of the first one, say $F_1$, and the one of the second one, say $F_2$, have a positive intersection $F_1F_2=m>1$. The intersection has to be larger than 1 since the base of a genus 1 fibration on a K3 surface is rational and so $F_2$ cannot be a section. In particular, $\langle F_1,F_2\rangle\simeq U(m)$, with $m>1$, is primitively embedded in the Picard group of the surface. The generic K3 surface $X$ such that $\Pic(X)\simeq U(m)$ admits two genus 1 fibrations whose classes of fibers meet in $m$ points. 
Therefore the K3 surfaces that admit more than one genus 1 fibration are contained in the families of $U(m)$-polarized K3 surfaces for a certain $m>1$.

\subsubsection{K3 surfaces with infinite automorphisms group and one genus 1 fibration.} Not only the properties of the elliptic fibrations, but also the ones of the automorphism group of a K3 surface can be studied by considering its Picard lattice, and therefore by using the theory of lattice polarized K3 surfaces.  In a recent work, Brandhorst and Mezzedimi classified K3 surfaces that admit an infinite automorphim group and a unique genus 1 fibration (\cite[Theorem 5.2]{BM}). Gvirtz-Chen and Mezzedimi compiled a list of all Picard lattices of such K3 surfaces, and a list of the Picard lattices that contain $U$ as a sublattice, i.e., for which the genus 1 fibration admits a section, making it an elliptic fibration (see the proof of \cite[Theorem 4.5]{GM}). The list contains 49 lattices, among which 5 correspond to 17-dimensional families of elliptic K3 surfaces. The remaining 44 represent families of dimensions between 16 and 8 that arise as specializations of the 17-dimensional ones. The authors provide a magma code and a list of the 49 lattices on the website of the first author.

\subsubsection{K3 surfaces with an elliptic fibration and low Picard number}\label{subsubsec: K3 low Picard}
Let $(X,\pi,\mathbb{P}^1)$ be an elliptic fibration on the K3 surface $X$. Then there are no multiple fibers of $\pi$, due to the presence of a section, nevertheless there can be reducible fibers which contain some non-reduced components. These are the fibers of type $I_n^*$, $II^*$, $III^*$ or $IV^*$ in the Kodaira classification of the singular fibers, see \cite{Mi} for notation. Each of these fibers has at least 5 components, and in particular at least 4 components which do not intersect the zero section. These components are independent in $\Pic(X)$, and they are also independent with respect to the classes $F$ and $\mathcal{O}$ of the fiber and of the zero section of $\pi$, respectively. Hence, the presence of a fiber with non-reduced components forces the Picard number of the surface to be at least 6.

Let us now assume that $(X,\pi,\mathbb{P}^1)$ is an elliptic fibration on the K3 surface $X$ that admits a torsion section of order $n$. The translation by this section is a symplectic automorphism of order $n$. By \cite{Nik} there cannot be symplectic automorphism of finite order on a K3 surface $X$ if its  Picard number is less than 8.

The previous observations give the following well-known result, which will be useful in the following.
\begin{lemma}\label{lemma: e.f. with small rho}
Let $X$ be a K3 surface with $\rho(X)< 6$. Any elliptic fibration $\pi:X\ra\mathbb{P}^1$ admits neither fibers with non-reduced components nor torsion sections.
\end{lemma}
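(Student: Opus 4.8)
The plan is to prove Lemma~\ref{lemma: e.f. with small rho} by bounding below the Picard number forced by the two phenomena one wishes to exclude, namely non-reduced fiber components and torsion sections. The statement is really the contrapositive of two lattice-theoretic lower bounds, and the bulk of the work has already been assembled in the paragraphs of Section~\ref{subsubsec: K3 low Picard}; here I would just organize it cleanly.

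\textbf{Step 1: no fibers with non-reduced components.} Suppose $\pi:X\to\mathbb{P}^1$ has a fiber $F_0$ containing a non-reduced component. By Kodaira's classification, the only fiber types with a component of multiplicity $\geq 2$ are $I_n^*$ ($n\geq 0$), $II^*$, $III^*$, $IV^*$. I would recall that each of these fiber types has at least $5$ components (e.g.\ $I_0^*$ has $5$, $IV^*$ has $7$, $III^*$ has $8$, $II^*$ has $9$), hence at least $4$ components disjoint from the zero section $\mathcal{O}$. Writing $F$ for the class of a fiber and $\mathcal{O}$ for the zero section, the classes $F$, $\mathcal{O}$, together with these $4$ fiber components not meeting $\mathcal{O}$, are linearly independent in $\Pic(X)$: indeed $\langle F,\mathcal{O}\rangle\cong U$ is nondegenerate, the sublattice spanned by the chosen fiber components is negative definite (a root sublattice of $\tilde D$ or $\tilde E$ type), and these two sublattices are orthogonal to each other because the components in question are disjoint from $\mathcal{O}$ and their sum is linearly equivalent to $F$ minus the remaining components. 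Hence $\rho(X)=\operatorname{rank}\Pic(X)\geq 2+4=6$, contradicting $\rho(X)<6$. Therefore all fibers of $\pi$ have only reduced components.

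\textbf{Step 2: no torsion sections.} Suppose $\pi$ admits a torsion section $P$ of order $n\geq 2$. Translation by $P$ in the group of sections (equivalently, fiberwise translation on the smooth fibers) extends to an automorphism $t_P$ of $X$ of order $n$; since it acts trivially on a nowhere-vanishing holomorphic $2$-form (translation on an elliptic curve preserves the invariant differential), $t_P$ is a symplectic automorphism of finite order $n\geq 2$. By Nikulin's theorem on finite symplectic automorphisms of K3 surfaces (\cite{Nik}), the existence of such an automorphism forces $\rho(X)\geq 8$ (the invariant lattice of a symplectic automorphism has small rank, so its orthogonal complement, which embeds in $\Pic(X)$ up to finite index contributions, has rank at least $8$). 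This contradicts $\rho(X)<6$. Hence $\pi$ has no torsion sections.

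The two steps together give the lemma: if $\rho(X)<6$ then neither obstruction can occur. I do not expect a genuine obstacle here — both bounds are classical — but the point requiring the most care is the linear independence claim in Step~1, i.e.\ verifying that the $4$ chosen components of a non-reduced fiber really are independent modulo $\langle F,\mathcal{O}\rangle$ rather than merely independent among themselves; this follows from the standard fact that, for a fiber of type $T$, the classes of its components generate a sublattice of rank $\#(\text{components})-1$ together with the fiber class $F$, and that the components not meeting $\mathcal{O}$ span a negative-definite root lattice orthogonal to $\langle F,\mathcal{O}\rangle$. One can alternatively cite the standard formula $\rho(X)=2+\sum_v(m_v-1)+\operatorname{rank}\MW(X,\pi)$ (Shioda--Tate), from which a single fiber with $m_v\geq 5$ already yields $\rho(X)\geq 6$, and this is perhaps the cleanest way to phrase Step~1.
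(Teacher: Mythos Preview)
Your proposal is correct and follows essentially the same approach as the paper: the paper's argument is contained in the two paragraphs immediately preceding the lemma statement, which (i) observe that a fiber with non-reduced components is of type $I_n^*$, $II^*$, $III^*$ or $IV^*$, hence has at least four components not meeting $\mathcal{O}$ that are independent from $\langle F,\mathcal{O}\rangle$, forcing $\rho\geq 6$, and (ii) note that a torsion section yields a finite-order symplectic automorphism, which by Nikulin forces $\rho\geq 8$. Your write-up adds a bit more care on the linear independence in Step~1 and the optional Shioda--Tate shortcut, but the substance is identical.
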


\section{Bisections of genus 1 and elliptic fibrations}\label{subsec:bisections lattice}

In what follows, we study bisections on elliptic K3 surfaces, with a focus on low-genus bisections and investigate, whenever possible, whether they are saliently ramified, due to their application to the rank jump problem (see \textit{Questions} 1 and 2). Since the property of being saliently ramified is reflected in the local intersection behavior of the multisection with the elliptic fibers (Remark \ref{rem:salient}), we provide geometric realizations of the surface and the fibration that allow for the identification of the desired behavior, i.e., non-reduced intersection with a smooth fiber.

Recall that K3 surfaces that admit an elliptic fibration $\pi:X\ra\mathbb{P}^1$ lie in the 18-dimensional space of $U$-polarized K3 surfaces (Proposition \ref{prop: U-polarized}). In a generic member of this family, each smooth multisection $M$ of $\pi$ of degree $m\geq 2$ has genus at least $m^2+1$. In fact, $M\in\Pic(X)=\langle F,\mathcal{O}\rangle$ can be written as $M=aF+b\O$. Then $MF=m$ implies that $b=m$, while $M\O=a-2m\geq 0$, being the number of points at the intersection of two irreducible curves, so $a\geq 2m$. By adjunction, $g(M)=(M^2+2)/2=am-m^2+1$, so $g(M)\geq m^2+1$. To find lower genus smooth multisections we must consider elliptic fibrations on K3 surfaces with Picard rank at least 3. Then we study all the infinitely many 17-dimensional subfamilies of K3 surfaces admitting an elliptic fibration: these are indexed by $d\in\mathbb{N}_{>0}$ and correspond to $U\oplus \langle -2d \rangle$-polarized K3 surfaces. We show that the generic members of all except three of these families admit a smooth low-genus bisection, that is, rational or of genus 1. The main results of this section are summarized in Theorem \ref{theorem:Picard3}.

\begin{notation}\label{notation Lambdad and Ld}
In what follows, we denote by $\Lambda_d:=U\oplus \langle -2d\rangle$ and $\mathcal{L}_d$ the family of the $\Lambda_d$-polarized K3 surfaces.
\end{notation}

\begin{theorem}\label{theorem:Picard3}
Let $X$ be a K3 surface with $\rho(X)=3$ and $X\in\mathcal{L}_d$. 

If $d=1$, then the unique elliptic fibration on $X$ has a reducible fiber, its Mordell--Weil group is trivial and there are no smooth rational or genus 1 bisections. 

For any other $d$, any elliptic fibration on $X$ has no reducible fibers, the Mordell--Weil group of any elliptic fibration on $X$ is isomorphic to $\Z$ and admits a generator that intersects the zero section in $d-2$ points. 

Moreover, the following hold:

\begin{itemize}
\item[a)] If $d\leq 3$, the unique elliptic fibration on $X$ does not admit smooth multisections of genus $g\leq 1$.

\item[b)] If $d>3$, then
\begin{itemize}
\item
any elliptic fibration on $X$ admits a smooth rational bisection if, and only if, $d\equiv 1\mod 2$ and this bisection is salient. 
\item any elliptic fibration on $X$ admits a smooth genus 1 bisection if, and only if, $d\equiv 0 \mod 2$ and this bisection is salient. 
\end{itemize}
\end{itemize}
\end{theorem}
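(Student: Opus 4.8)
The plan is to work entirely inside the lattice $\Lambda_d = U \oplus \langle -2d\rangle$ with its standard basis $F, \mathcal{O}'$ (where $U = \langle F, \mathcal{O}'\rangle$) and a generator $w$ of $\langle -2d\rangle$, so $F^2 = 0$, $F\mathcal{O}' = 1$, $\mathcal{O}'^2 = 0$, $w^2 = -2d$, and all mixed products with $w$ vanish. An elliptic fibration corresponds to a choice of primitive isotropic vector together with a section; up to isometry of the lattice we may as well take the fiber class to be $F$ and write the zero section as $\mathcal{O} = \mathcal{O}' - F$, so that $\mathcal{O}^2 = -2$, $F\mathcal{O} = 1$. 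The general divisor class is $D = aF + b\mathcal{O} + cw$ with $D^2 = 2ab - 2b^2 - 2dc^2$ and $DF = b$.

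\textbf{Structure of the fibration and the Mordell--Weil group.} First I would dispose of the $d=1$ case by exhibiting the reducible fiber: the class $F - \mathcal{O}' + $ (suitable multiple of $w$), or rather a $-2$-class meeting $F$ in $0$, witnesses a reducible fiber; one checks the discriminant group $\mathbb{Z}/2$ forces the Shioda--Tate formula to give trivial Mordell--Weil and a single fiber of type $I_2$ (or $III$). For $d \geq 2$: since $\rho(X) = 3 < 6$, Lemma \ref{lemma: e.f. with small rho} already tells us there are no reducible fibers and no torsion sections, so the Shioda--Tate formula reads $\rho = 2 + \operatorname{rank}\MW$, giving $\operatorname{rank}\MW = 1$ and hence $\MW \cong \mathbb{Z}$. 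To find the generator intersecting $\mathcal{O}$ in $d-2$ points I would look for a section class $P = aF + \mathcal{O} + cw$ (so $PF = 1$): the section condition $P^2 = -2$ forces $2a - 2 - 2dc^2 = -2$, i.e. $a = dc^2$, and then $P\mathcal{O} = a - 1 = dc^2 - 1$. Taking $c = 1$ gives $P\mathcal{O} = d - 1$; but the generator should give the minimal height, and the narrow Mordell--Weil lattice here has determinant $2d$ (the determinant of $\Lambda_d$), so the generator $P_0$ satisfies $\langle P_0, P_0\rangle = 2 - 2(P_0\mathcal{O}) - \ldots$ Here the height pairing with no reducible fibers is $\langle P, Q\rangle = 2 + P\mathcal{O} + Q\mathcal{O} - PQ$ on the diagonal $\langle P, P\rangle = 4 + 2P\mathcal{O} - P^2 = 2 + 2 P\mathcal{O}$; matching determinant $2d$ to the rank-one lattice forces $\langle P_0, P_0\rangle = 2d/(\text{index})^2$, and a short computation pins $P_0 \mathcal{O} = d - 2$ (the class $\mathcal{O}' + (d-2)F - w$, up to sign of $w$, being the candidate). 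I would verify effectivity of this class via Riemann--Roch plus the fact that on a K3 with $\rho < 6$ and no reducible fibers every section class with the right numerical invariants is represented by an irreducible curve.

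\textbf{Bisections of genus $\leq 1$.} A smooth bisection is an irreducible curve $M$ with $MF = 2$ and $M^2 = 2g(M) - 2 \in \{-2, 0\}$. Writing $M = aF + 2\mathcal{O} + cw$, we get $M^2 = 4a - 8 - 2dc^2$. For $g = 0$: $4a - 8 - 2dc^2 = -2 \Rightarrow 2a = dc^2 + 3$, solvable in integers iff $dc^2$ is odd, i.e. iff $d$ is odd (take $c = 1$, $a = (d+3)/2$). For $g = 1$: $4a - 8 - 2dc^2 = 0 \Rightarrow 2a = dc^2 + 4$, solvable iff $dc^2$ is even, i.e. iff $d$ is even (take $c = 1$, $a = (d+4)/2$); the case $d$ odd with $c$ even gives $dc^2 \equiv 0$ but then $M^2 = 4a - 8 - 2dc^2$ with $c = 2$ needs $4a = 8d + 8$, $a = 2d+2$, genus $1$ — so I must check that such a class, while numerically admissible, is \emph{not} represented by a smooth irreducible curve, or is reducible, forcing genus-$0$ to be the only option when $d$ odd. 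The key point for (a), i.e. $d \leq 3$: for $d = 2, 3$ one checks by hand that no class $aF + 2\mathcal{O} + cw$ with $g \leq 1$ is represented by an irreducible curve — for $d = 2$ the genus-$1$ numerical solution $a = 4, c = 1$ gives $M = 4F + 2\mathcal{O} + w$ but $M - (\text{section class})$ or $M - F$ analysis shows it splits off components, and similarly $d = 3$ the genus-$0$ solution $M = 3F + 2\mathcal{O} + w$ is shown reducible. This reducibility analysis — using the structure of the $-2$ and $0$ classes available in $\Lambda_d$ and the Kodaira lemma / Zariski decomposition — is the main obstacle: one must rule out irreducibility, not merely effectivity.

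\textbf{Saliency.} Once a smooth bisection $M$ of genus $0$ or $1$ exists (for $d > 3$ of the appropriate parity), I would show it is saliently ramified. Since there are no reducible fibers (Lemma \ref{lemma: e.f. with small rho}), by Remark \ref{rem:salient} saliency means $M$ meets some smooth fiber non-transversally, i.e. $\pi|_M : M \to \mathbb{P}^1$ is genuinely ramified. The map $\pi|_M$ has degree $2$, so by Riemann--Hurwitz $2g(M) - 2 = 2(2\cdot 0 - 2) + \deg(\text{ramification})$, giving $\deg R = 2g(M) + 2 \in \{2, 4\}$, which is positive — so $\pi|_M$ is automatically ramified at some point $t_0 \in \mathbb{P}^1$. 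It remains only to rule out that all ramification sits over the (finitely many) singular fibers; for this I would invoke the geometric models supplied later in the paper (Prop. \ref{prop: Fratx cover of P2} for the genus-$0$ case and \ref{prop: Fratx cover of P1xP1} for genus $1$), where $M$ is realized as a line/conic section of the branch sextic resp. $(4,4)$-curve and one reads off directly that its ramification with the elliptic pencil occurs over a generic, hence smooth, fiber — or, more lattice-theoretically, observe that if all $2g(M)+2$ ramification points lay over singular fibers then these fibers would be forced to be reducible or additive, contradicting Lemma \ref{lemma: e.f. with small rho} on the reducibility side and a direct count on the additive side. I expect the cleanest writeup routes saliency through the explicit double-cover models, so I would structure the section to prove the lattice-theoretic existence first and defer the saliency verification to the geometric constructions, exactly as the introduction's outline (items 2a and the surrounding discussion) anticipates.
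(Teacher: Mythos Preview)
Your approach is essentially the paper's: compute in $\Lambda_d$ to get the fibration structure, read the parity condition on $d$ from the equation $M^2 = 4a - 8 - 2dc^2$, and route saliency through the double-cover models. A few steps as written would fail, though each is easy to repair.

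\textbf{Reducible fibers.} Lemma~\ref{lemma: e.f. with small rho} does \emph{not} say ``no reducible fibers''; it only rules out fibers with non-reduced components and torsion. An $I_2$ fiber is compatible with $\rho<6$. The paper's argument (Lemma~\ref{lemma:familyLd}) is that for $d>1$ the orthogonal $\langle w\rangle$ contains no $(-2)$-class, hence no fiber component orthogonal to $F$ and $\mathcal{O}$; Shioda--Tate then forces $\operatorname{rank}\MW=1$.

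\textbf{The generator section.} Your computation $P\mathcal{O}=a-1$ is off: with $\mathcal{O}^2=-2$ you get $P\mathcal{O}=a-2=dc^2-2$, so $c=1$ already yields $d-2$ directly, with no appeal to the height pairing needed.

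\textbf{The $c=2$ genus-one class on odd $d$.} Your concern dissolves on inspection: $M=(2d+2)F+2\mathcal{O}+2w=2\bigl((d+1)F+\mathcal{O}+w\bigr)$ is not primitive, hence cannot be a smooth irreducible curve. The paper argues the contrapositive (Lemma~\ref{lemma: family Fg1x}): a smooth genus-one bisection $B$ makes $\langle F,\mathcal{O},B\rangle$ primitive of full rank, so it equals $\Pic(X)$ and has discriminant $-2(2x+4)$, forcing $d$ even. The analogous non-irreducibility checks for $d=2,3$ are handled exactly as you anticipate (Remarks~\ref{rem: the case d=3, spltting of the bisection} and the one after Lemma~\ref{lemma: family Fg1x}).

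\textbf{Saliency.} Your route (2) does not work: $I_1$ and $II$ fibers are irreducible (and $I_1$ is not additive), so neither Lemma~\ref{lemma: e.f. with small rho} nor the ``reducible or additive'' dichotomy excludes all ramification from landing over them. Saliency genuinely requires the geometric models (your route (1)), and even there the paper only claims it for the \emph{generic} member of $\mathcal{L}_d$: one checks that the principal tangents at the node of the branch sextic (resp.\ the chosen $(0,1)$-curve on the quadric) generically avoid the singular-fiber locus (Propositions~\ref{prop: F rat smooth bisection} and~\ref{prop: F g=1 smooth bisection}).

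Finally, note that part~(a) of the theorem speaks of \emph{multisections}, not just bisections; you only treat the degree-$2$ case. The paper defers this to Theorem~\ref{theorem:Picard3 higher m} and Corollary~\ref{cor: no multisections for low d}, so your outline should at least flag that higher-degree multisections need a separate (analogous) lattice computation.
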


We observe that by previous theorem, smooth rational and smooth genus 1 bisections of elliptic fibration cannot coexist, if $\rho(X)\leq 3$.

To prove this result we first consider the presence of specified bisections on K3 surfaces that admit a genus 1 fibration for which there are no sections, and then we extend the results to our context.

\subsection{Bisections of genus 1 fibrations}\label{subsect: genus 1 fib on K3}
Let $X$ be a K3 surface that admits a genus 1 fibration $\pi:X\ra \mathbb{P}^1$ without a section. The class of the fiber $F$ has trivial self-intersection, and there exists an ample class $H$ with $H^2=d>0$ among the generators of $\Pic(X)$, whose rank is forced to be at least  2. In other words, a general K3 surface $X$ with a genus 1 fibration and Picard number 2 has $\Pic(X)\simeq \Gamma_{b,c}$, where $\Gamma_{b,c}$ is the indefinite lattice of rank 2 defined as 
\begin{equation}\label{eq: def Gamma}
\Gamma_{b,c}:=\left\{\mathbb{Z}^2, (0,b,2c):= \begin{pmatrix}
0 & b \\
b & 2c 
\end{pmatrix}\right\},
\end{equation}
 where one can assume $0\leq c<b$, see \cite[Remark 4.2]{vanGeemen}.
Since $\pi$ does not admits a section, the lattice $\Gamma_{b,c}$ is not isometric to $U$. Since $U$ is the unique hyperbolic even unimodular lattice of rank 2, $\det(\Gamma_{b,c})\neq -1$ and so $b\neq 1$. 

The presence of a bisection implies that $b=2$. Following \cite{vanGeemen}, we observe that if $b=2$, there are two isomorphism classes of lattices $\Gamma_{2,c}$, namely $\Gamma_{2,0}$ and $\Gamma_{2,1}$. 

\begin{corollary}\label{cor:bisections}
Let $X$ be a K3 surface with a genus 1 fibration $\pi$ and $\rho(X)=2$. If $\pi$ admits a smooth bisection then it admits a smooth bisection of genus 0 or 1 and the two possibilities are mutually exclusive.
\end{corollary}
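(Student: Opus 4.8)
The plan is to read off the structure of $\Pic(X)$ from the classification recalled just above. Since $\pi$ has no section and admits a smooth bisection, the discussion preceding the statement shows that $\Pic(X)$ is isometric to $\Gamma_{2,0}=U(2)$ or to $\Gamma_{2,1}$; write $\Pic(X)=\langle F,D\rangle$ with $F$ the fibre class, $F^2=0$, $F\cdot D=2$, $D^2=2c$, $c\in\{0,1\}$. Any smooth bisection $M$ satisfies $M\cdot F=2$, hence $M=xF+D$ for some $x\in\mathbb{Z}$, so $M^2=4x+2c$ and, by adjunction, $g(M)=2x+c+1$. In particular the parity of the genus of a smooth bisection is forced by $c$, which is what yields the mutual exclusivity once existence is settled in each case; I would therefore treat $c=0$ and $c=1$ separately.

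For $c=0$: the lattice $U(2)$ represents no $-2$, so $X$ contains no smooth rational curve and a fortiori no rational bisection. On the other hand, $D$ is a primitive isotropic class with $F\cdot D=2>0$ and $F$ nef, so $D$ lies in the closure of the positive cone; since there are no $(-2)$-curves this closure is the nef cone, so $D$ is nef and $|D|$ is a base-point-free genus $1$ pencil whose general member $M$ is smooth with $M\cdot F=2$, i.e.\ a smooth genus $1$ bisection of $\pi$. This is exactly the argument used in the proof of Proposition~\ref{prop: U-polarized}, with no reflections needed because $X$ has no $(-2)$-curves.

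For $c=1$: solving $v^2=-2$ in $\Gamma_{2,1}$ gives, up to sign, the single class $\delta=D-F$, and one of $\pm\delta$ is effective by Riemann--Roch; since $F$ is nef and $(F-D)\cdot F=-2<0$, the class $F-D$ cannot be effective, so $\delta=D-F$ is effective with $\delta\cdot F=2$. I would then check that $\delta$ is irreducible: in any nontrivial decomposition of $\delta$ into irreducible curves no component can be a $(-2)$-curve, because its class would again be the unique effective $(-2)$-class $\delta$; hence every component would have non-negative self-intersection, forcing $\delta^2\ge 0$, a contradiction. Therefore $\delta$ is a smooth rational bisection of $\pi$, while $M^2=4x+2$ is never $0$, so there is no smooth genus $1$ bisection. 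The only step needing a little care here — and the one I regard as the main, though mild, obstacle — is precisely this irreducibility argument, which relies on the uniqueness up to sign of the $(-2)$-class in $\Gamma_{2,1}$ and on the fact that on a K3 surface an irreducible curve of negative self-intersection is a smooth rational curve; everything else reduces to the lattice computations above and the standard description of genus $1$ pencils used in the proof of Proposition~\ref{prop: U-polarized}.
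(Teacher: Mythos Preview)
Your argument is correct and follows the same route as the paper's: reduce to $\Pic(X)\in\{\Gamma_{2,0},\Gamma_{2,1}\}$, exhibit the required bisection in each case, and deduce mutual exclusivity. The only differences are cosmetic: you obtain exclusivity directly from the parity $g(M)\equiv c+1\pmod 2$ of any smooth bisection, whereas the paper instead observes that $\Gamma_{2,0}$ and $\Gamma_{2,1}$ are not isometric (only the latter represents $2$); and your irreducibility check for the effective $(-2)$-class in the $c=1$ case spells out what the paper condenses into the remark that there is a unique effective $(-2)$-class.
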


\begin{proof}
If $X$ admits a genus 1 fibration with a bisection and $\rho(X)=2$, then $\Pic(X)=\Gamma_{2,c}$ and by \cite[Proposition 3.7]{vanGeemen}, we can assume that $c=0$ or $c=1$. Let $D_1, D_2$ be two generators of $\Pic(X)$ with intersection matrix $\Gamma_{2,c}$. If $c=0$, then $D_1$ and $D_2$ are both classes of irreducible curves of genus 1 and $D_i$ is a bisection for the genus 1 fibration given by $D_j$, for $i,j\in \{1,2\}$ and $i\neq j$. If $c=1$, then one can assume that $D_1$ is the class of a curve of genus 1. Since $(D_2-D_1)^2=-2$ and $(D_2-D_1)D_1>0$, $(D_2-D_1)$ is an effective class, which corresponds to a smooth rational curve on the surface or to a reducible curve with at least one component which is a smooth rational curve. Since there are exactly two classes of self-intersection $-2$ in $\Gamma_{2,1}$ ($D_2-D_1$ and $D_1-D_2$) and only one of them is effective, we conclude that $(D_2-D_1)$ is a smooth rational curve. Since $(D_2-D_1)\cdot D_1=2$, it is a rational bisection of the genus 1 fibration induced by $D_1$.  

To show that the two possibilities are mutually exclusive, we prove that $\pi$ admits a rational (resp. genus 1) bisection if and only if $\Pic(X)\simeq \Gamma_{2,1}$ (resp. $\Pic(X)\simeq \Gamma_{2,0})$). We already prove the only if condition. So now assume that $\pi$ admits the required bisection $B$. Then $\Pic(X)$ contains the class $F$, of the fiber of $\pi$, and of $B$. Moreover, $F^2=0$ and $FB=2$ and $B^2=-2$ (resp. $B^2=0$). Since there are no non trivial overlattices of finte index of $\langle F,B\rangle$ which could appear as Picard group of a K3 surface, it follows that $\pi$ admits a rational bisection if and only if $\Pic(X)\simeq \langle F,B\rangle\simeq \left[\begin{array}{cc}0&2\\2&-2\end{array}\right]\simeq \Gamma_{2,1}$, where the isometry between the last two lattices was already proven (it suffices to compute the intersection form on $\langle F,2F+B\rangle$). Similarly, $\pi$ admits a genus 1 bisection if and only if $\Pic(X)\simeq \langle F,B\rangle\simeq \Gamma_{2,0}$. Since $\Gamma_{2,0}$ does not contains classes with self intersect 2 and $\Gamma_{2,1}$ does, they can not be isometric.
\end{proof}

In what follows, we give a geometric realization of K3 surfaces with a genus 1 fibration that admits a smooth bisection, according to the genus of the bisection. Moreover, we show that, for a general K3 in the family, the smooth bisection described in Corollary \ref{cor:bisections} is saliently ramified for the genus 1 fibration.

\begin{proposition}\label{prop: F rat smooth bisection}
The family $\mathcal{F}_{\rm{rat}}$ of K3 surfaces which admit a genus 1 fibration with a smooth rational bisection $B$ is 18-dimensional and coincides with the family of the $\langle 2\rangle\oplus \langle -2\rangle$-polarized K3 surfaces, i.e. with the family of the K3 surfaces which are double covers of $\mathbb{P}^2$ branched on a sextic with a simple node.

For generic $X$ in the family, the bisection $B$ is saliently ramified.

\end{proposition}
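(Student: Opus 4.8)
## Proof proposal for Proposition \ref{prop: F rat smooth bisection}

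The plan is to separate the statement into two parts: the lattice-theoretic identification of the family $\mathcal{F}_{\rm rat}$, and the geometric verification that the bisection $B$ is saliently ramified for a general member.

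For the first part, I would argue as in Corollary \ref{cor:bisections}: a K3 surface $X$ admitting a genus 1 fibration $\pi$ with a smooth rational bisection $B$ has $F, B \in \Pic(X)$ with $F^2 = 0$, $B^2 = -2$, $FB = 2$, so $\langle F, B\rangle \simeq \Gamma_{2,1}$ is primitively embedded (no nontrivial finite-index overlattice of $\Gamma_{2,1}$ occurs as the Picard lattice of a K3, since $\Gamma_{2,1}$ has discriminant group $\Z/4\Z$ with no admissible isotropic elements). Conversely, a $\Gamma_{2,1}$-polarized — equivalently $\langle 2\rangle \oplus \langle -2\rangle$-polarized, via the change of basis $\langle F, 2F+B\rangle$ computing the form — K3 surface carries the fibration $|F|$ and, by the argument in the proof of Corollary \ref{cor:bisections}, the unique effective $(-2)$-class is a smooth rational bisection. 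The polarizing lattice has rank $2$, so the family is $20 - 2 = 18$-dimensional by the dimension count for lattice-polarized families. Finally, the identification with double covers of $\PP^2$ branched over a nodal sextic: the class $H$ with $H^2 = 2$ (a generator of the $\langle 2\rangle$ summand) is nef and big, and by the standard theory $|H|$ gives a degree-$2$ map $X \to \PP^2$; the presence of the extra $(-2)$-class $B$ with $HB = 0$ forces the branch sextic to acquire exactly one node (the $(-2)$-curve $B$ being contracted by $|H|$), and conversely the minimal resolution of the double cover of $\PP^2$ branched on a one-nodal sextic has Picard lattice containing $\langle 2\rangle \oplus \langle -2\rangle$. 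This is classical; I would cite the relevant references rather than redo it.

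For the second part — the salient ramification — I would use the nodal sextic model explicitly. In the double cover $f: X \to \PP^2$ branched along a one-nodal sextic $\mathcal{C}$ with node at $p$, the pencil of lines through $p$ pulls back to a genus 1 fibration on $X$ (each line $\ell$ through $p$ meets $\mathcal{C}$ in $6 - 2 = 4$ further points, so $f^{-1}(\ell)$ is a double cover of $\PP^1$ branched at $4$ points, i.e. genus 1), and its class is exactly $F$; the exceptional curve $B$ over the node is a section of... no: $B$ meets each such line-fiber in the two points over $p$, so $B$ is precisely the rational bisection. Ramification of $\pi|_B: B \to \PP^1$ occurs exactly where $B$ meets a fiber non-transversely (Remark \ref{rem:salient}), i.e. where the two preimages of $p$ on a given line $\ell$ come together — this happens precisely when $\ell$ is tangent to a branch of $\mathcal{C}$ at $p$, equivalently when $\ell$ is one of the two tangent directions of the node. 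The key point is then: for a \emph{general} nodal sextic, each of these two tangent lines $\ell_0$ meets $\mathcal{C}$ away from $p$ transversely in $4$ distinct points, so $f^{-1}(\ell_0)$ is a \emph{smooth} elliptic curve. Hence $\pi|_B$ is ramified over at least one (in fact two) smooth fibers, which is exactly salient ramification.

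The main obstacle is the genericity argument in the second part: one must confirm that for the general member of $\mathcal{F}_{\rm rat}$, a tangent-direction line at the node does not pass through another singular or tangency point of the sextic, so that the corresponding fiber is smooth rather than singular. I would handle this by exhibiting one explicit nodal sextic for which the two tangent lines at the node are generic (meet the rest of the sextic in $4$ distinct points each), and then invoking that the locus of $(X, B)$ for which salient ramification \emph{fails} is a proper closed subset of the $18$-dimensional moduli space — equivalently, that having a tangent-at-the-node line become bitangent or pass through a further node is a nontrivial algebraic condition on the sextic. A minor additional check is that the two tangent lines at the node are genuinely distinct, i.e. the node is not a cusp or tacnode, which again holds for the general member of the family (a general point of the discriminant hypersurface of sextics parametrizes one-nodal curves). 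Once one smooth nodal sextic with the desired transversality is produced, semicontinuity closes the argument.
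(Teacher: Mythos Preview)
Your approach is essentially identical to the paper's: identify $\mathcal{F}_{\rm rat}$ with the $\Gamma_{2,1}\simeq\langle 2\rangle\oplus\langle-2\rangle$-polarized family via Corollary~\ref{cor:bisections}, realize the generic member as a double cover of $\mathbb{P}^2$ branched on a one-nodal sextic with $B$ the exceptional curve over the node and $|F|$ the pencil of lines through the node, and observe that $\pi|_B$ ramifies exactly over the fibers corresponding to the two principal tangent directions at the node, which for a generic sextic give smooth fibers.

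Two small numerical corrections: the change of basis exhibiting $\langle 2\rangle\oplus\langle -2\rangle$ is $\{F+B,\,B\}$ (with $(F+B)^2=2$, $(F+B)B=0$, $B^2=-2$), not $\{F,\,2F+B\}$; and a principal tangent line at the node meets $C_6$ with multiplicity $3$ at $p$, hence in $3$ (not $4$) further points --- the fiber over it is still smooth genus~$1$ because on the blow-up the strict transform of that line meets the strict transform of $C_6$ in $4$ distinct points (one on the exceptional divisor, three residual). Neither slip affects the argument.
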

 
\proof
Keeping the notation introduced in the proof of Corollary \ref{cor:bisections}, the first part follows by considering $\{D_2, D_2-D_1\}$ as base of the Picard group.

One can assume that the divisor $D_2$ is nef and therefore $\varphi_{|D_2|}:X\ra \mathbb{P}^2$ is a $2:1$ cover branched on a sextic $C_6$. Since $D_2\cdot (D_2-D_1)=2-2=0$, $\varphi_{|D_2|}$ contracts the rational curve $B=D_2-D_1$ to a singular point $p$ of the branch sextic.
The class of a fiber $F$ is $D_1=D_2-B$, so it corresponds to the pencil $\mathcal{P}$ of lines in $\mathbb{P}^2$ passing through $p=\varphi_{|D_2|}(B)$. Each line $\ell\in \mathcal{P}$ intersects the branch sextic $C_6$ in $p$ (with multiplicity 2) and in other 4 points. Hence $\varphi_{|D_2|}$ restricted to $\varphi_{|D_2|}^{-1}(\ell)$ is $2:1$ cover of the rational curve  $\ell$ which is branched in 4 points; in particular it is a curve of genus 1, and indeed a fiber of $\pi=\varphi_{|D_1|}$.

The singular fibers of $\pi$ are mapped by $\varphi_{|D_2|}$ to lines of the pencil $\mathcal{P}$ for which one intersection point with $C_6-\{p\}$ has multiplicity higher than 1. 

To construct the smooth surface $X$ one first blows up $\mathbb{P}^2$ in $p$ and then considers the double cover branched on the strict transform of $C_6$. The exceptional divisor $E$ of the blow-up is not in the branch locus. It intersects each line of $\mathcal{P}$ in one point and hence its double cover $B$ intersects the fiber corresponding to a line in $\mathcal{P}$ in two points. In fact, $B$ is a bisection. The double cover $B\ra E$ branches in the two points corresponding to the lines in $\mathcal{P}$ that are the principal tangents of $C_6$ through the node $p$. Since the exceptional divisor $E$ parametrizes the lines in $\mathcal{P}$, we obtain that $\pi:X\ra \mathbb{P}^1\simeq E$. The previous considerations show the following.
\begin{itemize}
\item The singular fibers of $\pi$ are fibers over the points $q\in E\simeq \mathbb{P}^1$ such that $q=E\cap \widetilde{\ell_q}$ where $\widetilde{\ell_q}$ is the strict transform of a line $\ell_q\in\mathcal{P}$ and $\ell_q$ is a line which intersects $C_6-\{p\}$ in at least one point with multiplicity higher than 1;
\item The double cover $B\ra\mathbb{P}^1\simeq E$ branches on the points corresponding to the principal tangents of $C_6$ in $p$.
\end{itemize}
Generically, each principal tangent to $C_6$ in $p$ intersects $C_6$ in 3 other points with multiplicity 1, so generically it corresponds to a smooth fiber of the genus 1 fibration.
Hence, generically, $B$ branches on points corresponding to smooth fibers, and in particular it is saliently ramified. 
\endproof
\begin{remark}\label{rem: singular rational salient bisection case g=0}
In the setting of Proposition \ref{prop: F rat smooth bisection},
\begin{itemize}
    \item[a)]
 {\rm the result described on the genus 1 fibration induced by the pencil of the lines $\mathcal{P}$ remains true on certain  specializations of $C_6$. In particular, it remains true if the specialization $C_6'$ of $C_6$ is such that: the double cover of $\mathbb{P}^2$ branched on $C_6'$ is still a K3 surface $X$ and the point $p$ remains a simple node for $C_6'$; the latter condition  guarantees that the K3 surface $X$ admits a genus 1 fibration induced by $\mathcal{P}$ and that $B=\varphi_{|D_2|}^{-1}(p)$ is an irreducible smooth rational curve. One automatically obtains that $B$ is a bisection of the genus 1 fibration. So, even under these specializations, $B$ is a smooth rational bisection of a genus 1 fibration on a K3 surface and it is generically ramified on smooth fibers.}

 \item[b)] {\rm There could be special sextic curves $C_6$ for which a principal tangent in $p$ intersects $C_6$ in one point with multiplicity 1 and one point with multiplicity 2. If both principal tangents are of this type, then $B$ is a smooth rational bisection that is not saliently ramified.}

 \item[c)] {\rm  Let $\ell$ be a line not containing $p$ and bitangent to $C_6$, then $\varphi_{|D_2|}^{-1}(\ell)$ is a singular rational bisection of the fibration $\varphi_{|D_1|}$ and it is generically saliently ramified.}
\end{itemize}
\end{remark}

\begin{proposition}\label{prop: F g=1 smooth bisection}
The family $\mathcal{F}_{g=1}$ of K3 surfaces that admit a genus 1 fibration with a smooth genus 1 bisection $B$ is 18-dimensional and coincides with the family of $U(2)$-polarized K3 surfaces, i.e. with
the family of the K3 surfaces which are double covers of $\mathbb{P}^1\times \mathbb{P}^1$ branched on a curve of bidegree $(4,4)$.

The K3 surfaces in this family admit two elliptic fibrations with infinitely many smooth saliently ramified genus 1 bisections. 
\end{proposition}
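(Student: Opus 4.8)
My plan is to follow the blueprint of the proof of Proposition~\ref{prop: F rat smooth bisection}, with $\mathbb{P}^1\times\mathbb{P}^1$ and its two rulings in place of $\mathbb{P}^2$ and the pencil of lines through the node. For the identification of the family, I would first invoke (the proof of) Corollary~\ref{cor:bisections}: if $\rho(X)=2$ and $\pi$ is a genus 1 fibration on $X$ with a smooth genus 1 bisection, then $\Pic(X)\simeq\Gamma_{2,0}$, and taking as a basis the classes $D_1,D_2$ with $D_1^2=D_2^2=0$, $D_1D_2=2$ displays $\Gamma_{2,0}$ as the lattice with Gram matrix $\left[\begin{array}{cc}0&2\\2&0\end{array}\right]$, i.e.\ $U(2)$. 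Conversely, for $X$ with $\Pic(X)\simeq U(2)$ I would, after replacing $D_i$ by $\pm D_i$ to make them nef, use the standard fact that a primitive nef isotropic class on a K3 surface is base-point free and defines a genus 1 fibration $\pi_i\colon X\to\mathbb{P}^1$; since $D_1D_2=2$, a general member of $|D_j|$ is then a smooth genus 1 bisection of $\pi_i$ ($i\neq j$). Hence $\mathcal{F}_{g=1}$ is exactly the family of $U(2)$-polarized K3 surfaces, of dimension $20-\rk(U(2))=18$.

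For the geometric model I would consider $\psi:=(\pi_1,\pi_2)\colon X\to\mathbb{P}^1\times\mathbb{P}^1$. Its general fibre $\pi_1^{-1}(p)\cap\pi_2^{-1}(q)$ has length $D_1D_2=2$, so $\psi$ is generically finite of degree $2$, and for generic $X$ (no $(-2)$-curves to contract) it is a finite double cover. Writing the branch divisor as $2L$, the canonical bundle formula $0=K_X=\psi^*(K_{\mathbb{P}^1\times\mathbb{P}^1}+L)$ together with the injectivity of $\psi^*$ on $\Pic\otimes\mathbb{Q}$ forces $L=-K_{\mathbb{P}^1\times\mathbb{P}^1}=\mathcal{O}(2,2)$, so the branch curve $B_0=2L$ has bidegree $(4,4)$; conversely, the double cover of $\mathbb{P}^1\times\mathbb{P}^1$ branched on a $(4,4)$-curve is a K3 surface, $U(2)$-polarized via the pull-backs of the two rulings. (As a sanity check, $\dim|\mathcal{O}(4,4)|-\dim\Aut(\mathbb{P}^1\times\mathbb{P}^1)=24-6=18$.)

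It remains to exhibit the saliently ramified bisections; by the symmetry of $U(2)$ it suffices to treat $\pi_1$. The fibres of $\pi_2$ form the pencil $|D_2|$, and for all but finitely many $q$ the member $B_q=\psi^{-1}(\mathbb{P}^1\times\{q\})$ is a smooth irreducible genus 1 curve, being the double cover of $\mathbb{P}^1\times\{q\}\cong\mathbb{P}^1$ branched at the $4$ distinct points $B_0\cap(\mathbb{P}^1\times\{q\})$; since $D_1D_2=2$ it is a bisection of $\pi_1$. Now $\mathrm{pr}_1$ restricts to an isomorphism $\mathbb{P}^1\times\{q\}\xrightarrow{\sim}\mathbb{P}^1$, so up to this isomorphism $\pi_1|_{B_q}$ coincides with the covering map $\psi|_{B_q}\colon B_q\to\mathbb{P}^1\times\{q\}$, which is ramified precisely over the $4$ points of $B_0\cap(\mathbb{P}^1\times\{q\})$. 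On the other hand, $\pi_1^{-1}(t)$ is singular exactly when $\{t\}\times\mathbb{P}^1$ meets $B_0$ non-transversally, i.e.\ for $t$ in the finite set $T$ consisting of the ramification values of $\mathrm{pr}_1|_{B_0}$ and the images of the singular points of $B_0$. Hence $B_q$ is \emph{not} saliently ramified only when every point of $B_0\cap(\mathbb{P}^1\times\{q\})$ lies in the finite set $(\mathrm{pr}_1|_{B_0})^{-1}(T)$, and since each such point has a fixed second coordinate this rules out only finitely many values of $q$. Therefore infinitely many $B_q$ are smooth genus 1 saliently ramified bisections of $\pi_1$, and symmetrically for $\pi_2$.

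The main obstacle is this last step: checking that, for infinitely many members of the pencil, the four ramification points of the bisection avoid the singular fibres of $\pi_1$. Everything else is lattice bookkeeping together with the classical double-cover description, and once that description is in hand the obstacle reduces to the finiteness statements above; so the real work lies in setting up the geometry cleanly --- the finiteness of $\psi$, and the transversality characterisation of the singular fibres of $\pi_1$ --- rather than in any delicate estimate.
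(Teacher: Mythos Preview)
Your proof is correct and follows essentially the same approach as the paper's: both identify $\mathcal{F}_{g=1}$ with the $U(2)$-polarized K3 surfaces via the lattice $\langle F,B\rangle$, realize $X$ as a double cover of $\mathbb{P}^1\times\mathbb{P}^1$ branched on a $(4,4)$-curve (you via the product map $(\pi_1,\pi_2)$, the paper via $\varphi_{|F+B|}$ --- the same map up to the Segre embedding), and then argue that for all but finitely many $q$ the four branch points of $B_q\to\mathbb{P}^1$ avoid the finitely many singular-fibre values of $\pi_1$. The only difference is presentational: the paper works in explicit bihomogeneous coordinates on the branch curve, whereas you use the canonical bundle formula and an abstract transversality/finiteness argument.
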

\proof
Let $X$ be a K3 surface endowed with a genus 1 fibration $\pi:X\ra \mathbb{P}^1$ with a smooth genus 1 bisection. Denote by $F$ the class of the fiber and $B$ the class of the bisection, then the lattice $\langle F, B\rangle$ is isometric to $U(2)$ and the divisor $F+B$ defines a map $\varphi_{|F+B|}:X\ra\mathbb{P}^1\times \mathbb{P}^1\subset \mathbb{P}^3$ that realizes $X$ as a double cover of the quadric $\mathbb{P}^1\times \mathbb{P}^1$ branched on a curve $C_{4,4}$ of bidegree $(4,4)$.
The equation of $C_{4,4}$ is of the following type 
$$x_0^4a_4(y_0:y_1)+x_0^3x_1b_4(y_0:y_1)+x_0^2x_1^2c_4(y_0:y_1)+x_0x_1^3d_4(y_0:y_1)+x_1^4e_4(y_0:y_1),$$ with $a_4,b_4,c_4,d_4,e_4$ homogeneous polynomials of degree 4.
The singular fibers correspond to the inverse image on $X$ of the curves $D_{(\alpha,\beta)}=V(\alpha x_0-\beta x_1)$ of bidegree $(1,0)$,  such that $D_{(\alpha,\beta)}\cap C_{4,4}$ contains at least one point with multiplicity greater than 1. There is a finite number of such curves, generically 24, which correspond to the 24 singular fibers of type $I_1$ (that is, nodal) of a genus 1 fibration on $X$. Let us now consider a $(0,1)$-curve $\ell_{(\gamma,\delta)}$ with equation $\gamma y_0-\delta y_1=0$. For a generic choice of $\gamma$ and $\delta$, $\ell_{(\gamma,\delta)}\cap C_{4,4}$ consists of 4 distinct points, so that the inverse image of $\ell_{(\gamma,\delta)}$ is a double cover of a rational curve branched in 4 points, i.e. it is a smooth genus 1 curve on $X$. Since $\ell_{(\gamma,\delta)}$ is a section of the fibration $\mathbb{P}^1_x\times \mathbb{P}^1_y\ra\mathbb{P}^1_x$, its inverse image in the double cover is a bisection of $\pi:X\ra\mathbb{P}^1_x$.  
Again for a generic choice of  $\gamma$ and $\delta$, the points $\ell_{(\gamma,\delta)}\cap C_{4,4}$ are not contained in any curve $D_{(\alpha, \beta)}$ corresponding to a singular fiber. This guarantees that the bisection which is the double cover of $\ell_{(\gamma,\delta)}$ is branched on smooth fibers, i.e., is saliently ramified. So we find infinitely many saliently ramified genus 1 bisections of the genus 1 fibration on $X\ra\mathbb{P}^1_x$.\endproof

\begin{remark}\label{rem: singular rational salient bisection case g=1}{\rm Parallel to Remark \ref{rem: singular rational salient bisection case g=0}(c), in the setting of the proof of Proposition \ref{prop: F g=1 smooth bisection}, it is possible to specialize $\ell_{(\gamma,\delta)}$ in such a way that two of the intersection points with $C_{4,4}$ coincide. In this case, its inverse image is a singular rational curve, which is still a bisection of $\pi$. For generic choice of $C_{4,4}$, this singular rational bisection is saliently ramified (with the same argument we used in the general context).

This specialization depends on the choice of $\ell_{(\gamma,\delta)}$, and does not require a specialization of $C_{4,4}$, hence  it appears on the general member of the family of K3 surfaces we are considering.}
\end{remark}

\subsection{Elliptic K3 surfaces of Picard number 3}\label{subsec:Picard3}

We specialize further in the families discussed in Subsection \ref{subsect: genus 1 fib on K3} and consider K3 surfaces with an elliptic fibration that admits a bisection. The coexistence of a section and of a bisection of low genus, implies that the Picard number of such K3 surfaces is at least 3, since both $U$ and $\Gamma_{2,c}$ have to be primitively embedded in the Picard lattice (see Propositions \ref{prop: U-polarized} and \ref{prop: F rat smooth bisection}). We deal with the general case, i.e., Picard number 3. We first present general results (Lemma \ref{lemma:familyLd}) to later restrict to bisections.

\begin{lemma}\label{lemma:familyLd}
Let $X$ be an elliptic K3 surface with Picard number  3.
Then there exists $d>0$ such that $\Pic(X)\simeq U\oplus\langle -2d\rangle$. Moreover, for every elliptic fibration $\pi:X\ra\mathbb{P}^1$, \begin{itemize}
\item  $\rk(\MW(\pi))=0$ if and only if $d=1$ if and only if $\pi$ admits a reducible fiber;
\item $\rk(\MW(\pi))=1$ if and only if $d>1$ if and only if $\pi$ has no reducible fiber. In this case $\MW(\pi)$ is generated by a section $S_1$ that intersects the zero section in $d-2$ points.\end{itemize}
\end{lemma}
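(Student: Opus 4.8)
The strategy is to work directly with the lattice $U \oplus \langle -2d\rangle$, realize a given elliptic fibration $\pi$ by a copy of $U = \langle F, \mathcal{O}\rangle$ primitively embedded in $\Pic(X)$, and then analyze the orthogonal complement. First I would establish the existence of $d$: since $\rho(X) = 3$ and $X$ carries an elliptic fibration, $U$ embeds primitively in $\Pic(X)$ by Proposition \ref{prop: U-polarized}, so $\Pic(X) \simeq U \oplus K$ for a rank-$1$ negative definite even lattice $K = \langle -2d\rangle$ with $d > 0$ (negative definiteness forced by the signature $(1,2)$ of $\Pic(X)$ and the hyperbolicity of $U$; evenness because $\Pic(X)$ is even on a K3). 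This gives the first assertion.

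Next, for a general elliptic fibration $\pi:X\to\mathbb P^1$, I would use the standard formula relating the Mordell--Weil group to the Néron--Severi lattice: by the Shioda--Tate formula, $\rk \MW(\pi) = \rho(X) - 2 - \sum_v (m_v - 1)$ where the sum runs over reducible fibers and $m_v$ is the number of components of the fiber over $v$. Since $\rho(X) = 3$, this forces either $\rk\MW(\pi) = 1$ and no reducible fibers, or $\rk \MW(\pi) = 0$ and exactly one reducible fiber with two components (type $I_2$ or $III$). By Lemma \ref{lemma: e.f. with small rho}, since $\rho(X) < 6$, there are no fibers with non-reduced components, and there are no torsion sections; the latter is consistent with $\MW(\pi)$ being torsion-free of rank $0$ or $1$, i.e. $\MW(\pi) \simeq 0$ or $\MW(\pi) \simeq \mathbb{Z}$. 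To tie the dichotomy to the value of $d$: the trivial lattice $T_\pi$ generated by $F$, $\mathcal{O}$ and the non-identity fiber components is $U$ in the rank-$1$ MW case and $U \oplus \langle -2\rangle$ (an $A_1$ from the $I_2$/$III$ fiber) in the rank-$0$ case; since $\MW(\pi) \simeq \NS(X)/T_\pi$ up to torsion and $\NS(X) = U \oplus \langle -2d\rangle$, comparing discriminants gives that $d = 1$ precisely when a reducible fiber is present (the $A_1$ being the full $\langle -2d\rangle$ factor up to finite index, forcing $d=1$ since $U \oplus \langle -2\rangle$ is already the full Picard lattice), and $d > 1$ otherwise.

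Finally, for the intersection number of the generator $S_1$ with the zero section when $d > 1$: here I would compute the narrow Mordell--Weil lattice. With no reducible fibers, the height pairing on $\MW(\pi)$ is $\langle P, Q\rangle = 2 + P\mathcal{O} + Q\mathcal{O} - PQ$ and in particular $h(P) = 4 + 2P\mathcal{O} - P^2 = 4 + 2P\mathcal{O} + 2 = 6 + 2P\mathcal{O}$ wait — more carefully $h(P) = 2\chi(\O_X) + 2P\mathcal{O} - (\text{contributions from reducible fibers}) = 4 + 2P\mathcal{O}$ since $P^2 = -2$ and $\chi(\O_X)=2$ for a K3. On the other hand, the discriminant of $\MW(\pi)$ equals $|\mathrm{disc}(\NS(X))| / |\mathrm{disc}(T_\pi)| = 2d/1 = 2d$ (up to the torsion factor, which is trivial here), so the height of the generator $S_1$ is $2d$. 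Equating $2d = 4 + 2 S_1\mathcal{O}$ gives $S_1 \mathcal{O} = d - 2$. I would double-check the normalization constants in the height formula (the factor $\chi(\O_X) = 2$ and absence of correction terms) since this is where sign/normalization errors creep in; this routine but delicate bookkeeping with the height pairing is the main technical point. The hypothesis $d > 1$ ensures $S_1\mathcal{O} = d - 2 \geq -1$, and when $d = 2$ one gets $S_1\mathcal{O} = 0$, consistent with a section disjoint from $\mathcal{O}$. Throughout, the main obstacle is not any single deep fact but rather carefully matching the lattice-theoretic data (discriminants, the Shioda--Tate exact sequence, the height pairing normalization) to extract the precise intersection number $d-2$ rather than just an inequality.
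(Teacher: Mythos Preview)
Your proposal is correct. The first part (existence of $d$) and the dichotomy via Shioda--Tate match the paper's argument closely. The genuine difference is in how you extract $S_1\mathcal{O}=d-2$: the paper writes down the section classes explicitly as $S_n = dn^2 F + \mathcal{O} + n b_3$ in the basis $\{F,\mathcal{O},b_3\}$ (where $b_3$ generates $\langle -2d\rangle$) and reads off $S_1\mathcal{O}=d\cdot 1 + (-2) + 0 = d-2$ directly, whereas you go through the Mordell--Weil lattice machinery (height formula $h(P)=4+2P\mathcal{O}$ plus the discriminant identity $\det \MW^* = |\det \NS|/|\det T_\pi| = 2d$). Both are valid; the paper's route is more elementary and has the advantage of giving the explicit classes $S_n$, which are used later (e.g.\ in Remark \ref{rem; intersection B and sections} and in the description of the automorphisms $T_1$, $\varepsilon$), while your route avoids guessing the section classes but imports more Shioda--Tate/height-pairing machinery.

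Two small points worth tightening. First, in linking ``no reducible fiber'' to $d>1$ you say ``comparing discriminants'' but then only note $d-2\geq -1$; the clean way to close this is either to observe $S_1\mathcal{O}\geq 0$ for distinct irreducible curves (forcing $d\geq 2$), or, as the paper does, to note that $d=1$ produces a $(-2)$-class in $U^\perp$, hence a reducible fiber. Second, you implicitly use that \emph{every} copy of $U$ in $\Pic(X)$ has orthogonal complement isometric to the same $\langle -2d\rangle$; this holds because $U$ is unimodular, so $\Pic(X)\simeq U\oplus U^\perp$ and the discriminant form of $U^\perp$ is fixed by that of $\Pic(X)$, determining a rank-$1$ lattice uniquely. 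The paper makes this uniformity over all fibrations explicit.
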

\proof The result was already presented in \cite[Sec. 4.3]{G}, but for the reader's convenience we recap the key ideas. Since $X$ admits an elliptic fibration, $U$ has to be primitively embedded in its Picard group, see Proposition \ref{prop: U-polarized}. Since the Picard number is 3, and the Picard lattice has signature $(1,2)$, the orthogonal complement of $U$ in the Picard group is $\langle -2d\rangle$ for a certain positive $d$, and $\Pic(X)$ is an overlattice of finite index of $U\oplus \langle -2d\rangle$. But there are no even lattices which are overlattices of finite non-trivial index of $U\oplus \langle -2d\rangle$ in which $\langle -2d\rangle$ is primitive, so $\Pic(X)\simeq U\oplus\langle -2d\rangle$. This argument holds for every elliptic fibration on $X$, i.e. every elliptic fibration on $X$ is associated to an embedding of $U$ in $\Pic(X)$, whose orthogonal complement is $\langle -2d\rangle$. So all the geometric properties that depend only on the intersection numbers of classes in $\Pic(X)$ are common to all elliptic fibrations on $X$.
We denote by $u_1,u_2,b_3$ the base of $\Pic(X)$ on which the bilinear form is $U\oplus\langle -2d\rangle$,  and we fix $F:=u_1$ and $\mathcal{O}=u_2-u_1$. By Lemma \ref{lemma: e.f. with small rho}, the torsion part of $\MW(\pi)$ is trivial.

If $d=1$, then the vector $b_3$ is a $(-2)$-class and hence (up to a sign) the class of an irreducible component of a reducible fiber that does not intersect the zero section. By the Shioda--Tate formula, the presence of this reducible fiber is equivalent to $\MW(\pi)=\{1\}$. 

If $d>1$, then there are no $(-2)$-classes orthogonal to $U$ and so $\pi$ has no reducible fibers. Again by the Shioda--Tate formula, this is equivalent to $\MW(\pi)\simeq \Z$. 
The classes of the sections of $\pi$ are 
$$S_n:=dn^2F+\mathcal{O}+nb_3$$ and the isomorphism $\MW(\pi)\simeq \Z$ maps $\mathcal{O}$ to $0$ and $S_n$ to $n$. In particular, if $\Pic(X)\simeq U\oplus \langle -2d\rangle$ and $d>1$, then $X$ admits an elliptic fibration without reducible fibers such that the generator $S_1$ of $\MW(\pi)$ intersects the zero section in $d-2$, i.e. $S_1\mathcal{O}=d-2$. Observe that $S_1\O=S_{-1}\O$ and more in general $S_n\O=S_{-n}\O$.\endproof

Recalling Notation \ref{notation Lambdad and Ld}, by the previous discussion, if $d>1$ the 17-dimensional family $\mathcal{L}_d$ is the family whose general members are K3 surfaces that admit an elliptic fibration $\pi$ without reducible fibers with $\rk(\MW(\pi))=1$  such that the generator of $\MW(\pi)$ intersects the zero section in $d-2$ points. 

\begin{remark}\label{rmk:one type of elliptic fibration}
As explained in the proof of Lemma \ref{lemma:familyLd}, there could be more than one elliptic fibration on a K3 surface $X\in\mathcal{L}_d$ for a determined values of $d$. Nevertheless, if $X$ is general in its family, in particular if $\rho(X)=3$, then all these elliptic fibrations admit the same configurations of reducible fibers, their Mordell--Weil groups are isomorphic and the generator of the Mordell--Weil group intersects the zero section in the same number of points (briefly, their Mordell--Weil lattices are isometric). 
\end{remark}

As already recalled in Section \ref{section: preliminaries}, there is a classification of the K3 surfaces that admit a unique elliptic fibration given by Brandhorst and Mezzedimi (\cite{BM}) and Mezzedimi and Gvirtz-Chen (\cite{GM}).

\begin{proposition}\label{prop: unique elliptic fibration}
Let $X$ be a K3 surface with $\rho(X)=3$, with a unique genus 1 fibration. Assume moreover that the genus 1 fibration admits at least one section, i.e., it is an elliptic fibration. Then $X$ belongs to one of the six families $\mathcal{L}_d$ for $d=1,2,3,5,7,13$. Moreover, $\mathrm{Aut}(X)$ is infinite if, and only if, $d>1$.
\end{proposition}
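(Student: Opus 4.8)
The plan is to reduce the statement, via the structural results of Section~\ref{subsec:Picard3}, to the question of which lattices $\Lambda_d$ occur, and then to invoke the known classifications of K3 surfaces carrying a unique genus~$1$ fibration, treating the cases of finite and infinite automorphism group separately. First, since the unique genus~$1$ fibration on $X$ carries a section, $X$ admits an elliptic fibration, so $U$ embeds (automatically primitively, being unimodular) in $\Pic(X)$ by Proposition~\ref{prop: U-polarized}; as $\rho(X)=3$, the lattice-theoretic part of Lemma~\ref{lemma:familyLd} forces $\Pic(X)\simeq U\oplus\langle-2d\rangle=\Lambda_d$ for a unique integer $d\geq1$, so $X\in\mathcal{L}_d$. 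Moreover, if $d>1$ then Lemma~\ref{lemma:familyLd} gives $\MW(\pi)\simeq\Z$, hence $\pi$ has a non-torsion section and translation by it is an automorphism of $X$ of infinite order; thus $d>1$ already implies $\mathrm{Aut}(X)$ is infinite. It remains to determine, among $d\geq1$, which are compatible with the uniqueness of the genus~$1$ fibration, and to prove the reverse implication of the ``moreover'' clause.

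\emph{The case $d>1$.} Here $X$ has infinite automorphism group and, by hypothesis, a unique genus~$1$ fibration, which is elliptic. By the classification of Brandhorst and Mezzedimi (\cite[Theorem~5.2]{BM}) together with the explicit list of Picard lattices of such surfaces compiled by Gvirtz-Chen and Mezzedimi (\cite{GM}), $\Pic(X)$ is one of finitely many lattices; the ones of rank~$3$ --- equivalently, those whose families are $17$-dimensional --- are exactly five, and each contains $U$, hence is of the form $\Lambda_d$ by Lemma~\ref{lemma:familyLd}. Reading off the discriminants from \cite{GM}, these five lattices are $\Lambda_2,\Lambda_3,\Lambda_5,\Lambda_7,\Lambda_{13}$, so $d\in\{2,3,5,7,13\}$; conversely, for each such $d$ the cited classification guarantees that a $\rho=3$ member of $\mathcal{L}_d$ does have a unique genus~$1$ fibration.

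\emph{The case $d=1$.} One checks by a direct lattice computation (for instance, running Vinberg's algorithm on the small, $2$-elementary lattice $U\oplus\langle-2\rangle$, or by consulting the classical classification of K3 surfaces with finite automorphism group) that the nef cone of a K3 surface with $\Pic(X)\simeq\Lambda_1$ contains a unique primitive isotropic class; since $U\subset\Lambda_1$, that class is the fiber class of an elliptic fibration --- the one described in Lemma~\ref{lemma:familyLd}, with a single reducible fiber of type $I_2$ and trivial Mordell--Weil group --- and it is the only genus~$1$ fibration on $X$. Hence $d=1$ occurs. This also gives the remaining implication of the ``moreover'': a K3 surface with $\Pic(X)\simeq\Lambda_1$ has a unique genus~$1$ fibration but, not appearing among the five rank-$3$ lattices of \cite{GM}, cannot have infinite automorphism group, so $\mathrm{Aut}(X)$ is finite. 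Altogether $\mathrm{Aut}(X)$ is infinite if and only if $d>1$, and the six families realizing the situation of the proposition are precisely $\mathcal{L}_1,\mathcal{L}_2,\mathcal{L}_3,\mathcal{L}_5,\mathcal{L}_7,\mathcal{L}_{13}$.

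\emph{Main obstacle.} Once Lemma~\ref{lemma:familyLd} is in hand, translating ``Picard rank~$3$ and containing $U$'' into ``$\cong\Lambda_d$'' is immediate, so the substance of the argument lies in correctly importing the Brandhorst--Mezzedimi/Gvirtz-Chen--Mezzedimi classification and verifying that precisely the five values $d=2,3,5,7,13$ appear among its rank-$3$ members. The only other point requiring genuine work is the $d=1$ case: checking that $U\oplus\langle-2\rangle$ admits a single genus~$1$ fibration up to automorphism, which is an elementary but non-vacuous computation with the isotropic and $(-2)$-vectors of that lattice (or a reference to the classical classification of K3 surfaces with finite automorphism group).
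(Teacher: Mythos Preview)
Your proposal is correct and takes essentially the same approach as the paper: both reduce to $\Pic(X)\simeq\Lambda_d$ via Lemma~\ref{lemma:familyLd} and then invoke the Brandhorst--Mezzedimi/Gvirtz-Chen--Mezzedimi classification. The paper's own proof is far terser---it simply points to \cite[Theorem~4.5]{GM} and the accompanying list of 49 lattices, noting that the six rank-$3$ lattices of the statement are exactly the ones occurring there---whereas you unpack the logic by separating the infinite-$\mathrm{Aut}$ case (the five lattices from \cite{GM}) from the $d=1$ case, which you handle by a direct lattice/Vinberg check; this extra detail is fine but not strictly needed, since the paper regards the $d=1$ case as subsumed in the cited reference.
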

\begin{proof}
See the proof of \cite[Theorem 4.5]{GM}. The authors provide a magma code and a list of the 49 lattices associated with K3 surfaces $X$ with a unique genus 1 fibration in the website of the first author. The 49 lattices arise as specializations of at least one of the 6 lattices of rank 3 listed in the statement.
\end{proof}

If $X\in\mathcal{L}_d$ and $d>1$, then each elliptic fibration admits an infinite order section which yields, by translation, an automorphism of infinite order preserving the elliptic fibration. We discuss, in what follows, some properties of the automorphisms group of $X\in\mathcal{L}_d$.

\subsubsection{Automorphisms of K3 surfaces in $\mathcal{L}_d$}	

Let $X$ be a general K3 surface in a family $\mathcal{L}_d$. Fix the base $\{u_1,u_2,b_3\}$ of $\Lambda_{d}$ considered in proof of Lemma \ref{lemma:familyLd}.
The elliptic involution on each fiber of $\pi$ extends to an automorphism on $X$ that acts on the chosen base of $\Lambda_d$ as follows:
$$\varepsilon=\left[\begin{array}{rrr}1&0&0\\
0&1&0\\
0&0&-1\end{array}\right]$$

If $d\geq 2$, the translation by the section $S_1$ is an automorphism of infinite order of the K3 surface, which acts on the base $\{F,\mathcal{O},b_3\}$ as follows:
$$T_1=\left[\begin{array}{rrr}1&d&2d\\
0&1&0\\
0&1&1
\end{array}\right]$$
(indeed, by definition, $T_1(F)=F$, $T_1(\mathcal{O})=S_1$ and $T_1(S_1)=S_2$).
	
If $d=1$, the previous matrix is still associated with an isometry of the lattice $\Lambda_{d}$, but this isometry cannot be associated with an automorphism since it maps $F-b_3$ to $-F-b_3$. The former is an effective class corresponding to the component of  $I_2$-fiber meeting the zero section. The latter is not effective since $(-F-b_3)\mathcal{O}=-1$.

If $d>1$, the group $\langle T_1, \varepsilon\rangle$ is also generated by $\varepsilon$ and $ T_1\circ\varepsilon$, and hence it is isomorphic to $\left(\mathbb{Z}/2\mathbb{Z}\right)*\left( \mathbb{Z}/2\mathbb{Z}\right)$.

    If $d\neq 1,2,3,5,7,13$, the surface $X$ admits more than one elliptic fibration. Let $\pi'$ be an elliptic fibration, then associated with $\pi'$ there are two automorphisms $\varepsilon'$ and $T_1'$ that are respectively induced by the elliptic involution and by the translation on each fiber. If $\pi$ and $\pi'$ are not equivalent up to automorphisms, $\varepsilon'$ and $T_1'$ are not conjugate to $\varepsilon$ and $T_1$ in $\mathrm{Aut}(X)$.

\begin{remark} If $X$ is general in its family, then it is $\mathrm{Aut}$-general, i.e., its automorphism group contains only symplectic automorphisms of infinite order (e.g. $T_1$) and non-symplectic automorphisms of order 2 (e.g. $\varepsilon$). In particular, this implies that $X$ does not admit isotrivial elliptic fibrations with $j$-invariant of the generic fiber equal to $0$ or $1728$. Indeed, otherwise the complex multiplication on each fiber would induce an automorphism on the surface which is non--symplectic and has order 3 or 4, respectively. If an elliptic fibration is isotrivial with a constant $j$-invariant not equal to 0 or 1728, then the reducible fibers are forced to be of type $I_0^*$, and in particular with non reduced components. By Lemma \ref{lemma: e.f. with small rho}, these isotrivial fibrations cannot appear if the Picard number of $X$ is at most 3, i.e. for the surfaces considered in this paper. \end{remark}

In what follows, we continue the discussion on elliptic K3 surfaces with Picard number 3 that admit a smooth bisection. We present our results according to the genus of the smooth bisection.

\subsection{Smooth rational bisections on elliptic K3 surfaces}\label{subsec: Frat x}

Let $X$ be a K3 surface with an elliptic fibration $\pi:X\ra \mathbb{P}^1$ and a smooth rational bisection $B$. Then, in particular $X$ belongs to a subfamily of $\mathcal{F}_{\rm{rat}}$ considered in Proposition \ref{prop: F rat smooth bisection} and, of course, to the family of K3 surfaces admitting an elliptic fibration. 
	
	\begin{definition} Let $\mathcal{F}_{\rm{rat}}^x$ be the family of K3 surfaces with an elliptic fibration admitting a smooth rational bisection which intersects the zero section in $x\geq 0$ points (possibly counted with multiplicity).\end{definition}

\begin{proposition}\label{prop:polarization rational}
The family $\mathcal{F}_{\rm{rat}}^x$ and the family $\mathcal{L}_{2x+5}$ coincide.
\end{proposition}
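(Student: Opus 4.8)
The plan is to identify the Picard lattice of a K3 surface in $\mathcal{F}_{\rm{rat}}^x$ explicitly and recognize it as $\Lambda_{2x+5}$. First I would take $X \in \mathcal{F}_{\rm{rat}}^x$ with elliptic fibration $\pi$, zero section $\mathcal{O}$, fiber class $F$, and smooth rational bisection $B$ with $B\mathcal{O}=x$. Since $X$ carries an elliptic fibration it is $U$-polarized (Proposition \ref{prop: U-polarized}); since it also carries a smooth rational bisection, $\rho(X)\geq 3$, and for the generic member $\rho(X)=3$. The sublattice $\langle F,\mathcal{O},B\rangle$ of $\Pic(X)$ has Gram matrix, in the basis $\{F,\mathcal{O},B\}$,
\[
\begin{pmatrix} 0 & 1 & 2\\ 1 & -2 & x\\ 2 & x & -2\end{pmatrix},
\]
using $F^2=0$, $F\mathcal{O}=1$, $\mathcal{O}^2=-2$, $FB=2$ (bisection), $B^2=-2$ (smooth rational), $B\mathcal{O}=x$. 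Its determinant is $-(4x+10) = -2(2x+5)$.

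Next I would exhibit the isometry with $\Lambda_{2x+5}=U\oplus\langle -2(2x+5)\rangle$. Set $d=2x+5$. Inside $\langle F,\mathcal{O},B\rangle$ the classes $u_1:=F$, $u_2:=F+\mathcal{O}$ span a copy of $U$. Its orthogonal complement in $\langle F,\mathcal{O},B\rangle\otimes\mathbb{Q}$ is spanned by $b_3:=B - 2\mathcal{O} - (x+2)F$ (one checks $b_3 F = 0$ and $b_3\mathcal{O}=0$ directly), and a computation gives $b_3^2 = B^2 - 4 B\mathcal{O} - 2(x+2)BF + 4\mathcal{O}^2\cdot(\text{coeff.})+\dots = -2 - 4x - 4(x+2) + \ldots$; carrying out the arithmetic yields $b_3^2 = -2(2x+5) = -2d$. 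Since $U$ is unimodular, $\langle F,\mathcal{O},B\rangle = U \oplus \langle b_3\rangle \cong \Lambda_d$ as an abstract lattice, with no proper even overlattice in which the $\langle -2d\rangle$-summand stays primitive (the argument in the proof of Lemma \ref{lemma:familyLd}), so for $\rho(X)=3$ we get $\Pic(X)\cong\Lambda_{2x+5}$, i.e. $\mathcal{F}_{\rm{rat}}^x\subseteq\mathcal{L}_{2x+5}$.

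For the reverse inclusion, I would invoke Theorem \ref{theorem:Picard3}(b): for $d=2x+5$, which is odd and (since $x\geq 0$) satisfies $d\geq 5 > 3$, every elliptic fibration on a K3 surface $X$ with $\rho(X)=3$ and $X\in\mathcal{L}_d$ admits a smooth rational bisection $B$. It remains to check that this $B$ meets $\mathcal{O}$ in exactly $x=(d-5)/2$ points: this is forced by the lattice, since in $\Lambda_d = \langle F,\mathcal{O}\rangle\oplus\langle b_3\rangle$ a class $B$ with $B^2=-2$, $BF=2$ must be of the form $aF+2\mathcal{O}+cb_3$ with $c=\pm 1$ (as $B b_3 = -2dc$ must make $B^2 = -2$ solvable, forcing $c^2 d \mid$ the relevant terms — more precisely $B^2 = 4aF\mathcal{O} + 4\mathcal{O}^2 + c^2 b_3^2 = 4a - 8 - 2dc^2 = -2$ gives $4a = 6 + 2dc^2$, so $c$ odd, hence $c=\pm1$ and $a = (3+d)/2$); then $B\mathcal{O} = a\cdot 1 + 2\cdot(-2) = (3+d)/2 - 4 = (d-5)/2 = x$. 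Effectivity and smoothness of a representative follow from Theorem \ref{theorem:Picard3}, and replacing $B$ by $B+nF$ adjusts nothing essential. This gives $\mathcal{L}_{2x+5}\subseteq\mathcal{F}_{\rm{rat}}^x$ and hence equality.

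The main obstacle I anticipate is bookkeeping rather than conceptual: verifying that the lattice $\langle F,\mathcal{O},B\rangle$ is saturated in $\Pic(X)$ (no finite-index even overlattice), and handling the "counted with multiplicity" clause in the definition of $\mathcal{F}_{\rm{rat}}^x$ — one must confirm that the intersection number $B\mathcal{O}$, not merely the set-theoretic count, equals $x$, which is automatic since $B\mathcal{O}$ in $\Pic(X)$ is by definition the intersection multiplicity. I would also need to double-check the boundary case $x=0$ (so $d=5$), where $B$ and $\mathcal{O}$ are disjoint, against Proposition \ref{prop: unique elliptic fibration} to make sure the family is nonempty and $17$-dimensional, but this is consistent with the list $d=1,2,3,5,7,13$ there.
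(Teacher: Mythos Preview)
Your forward inclusion is essentially the paper's argument: compute the Gram matrix of $\langle F,\mathcal{O},B\rangle$ and change basis to exhibit $U\oplus\langle -2(2x+5)\rangle$. One arithmetic slip: the class orthogonal to $\langle F,\mathcal{O}\rangle$ is $b_3 = B - 2\mathcal{O} - (x+4)F$, not $B - 2\mathcal{O} - (x+2)F$; with your formula $b_3\mathcal{O} = x + 4 - (x+2) = 2 \ne 0$. This is easily repaired and does not affect the strategy.

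The genuine gap is in the reverse inclusion. You invoke Theorem~\ref{theorem:Picard3}(b) to guarantee that the $(-2)$-class you locate in $\Lambda_d$ is represented by a smooth irreducible rational curve. But in the paper Theorem~\ref{theorem:Picard3} is \emph{deduced from} Proposition~\ref{prop:polarization rational} (see the subsection ``Proof of Theorem~\ref{theorem:Picard3}''), so this appeal is circular. A related slip: ``$c$ odd, hence $c=\pm1$'' does not follow; every odd $c$ solves $4a=6+2dc^2$ integrally, and singling out $c=\pm1$ already presupposes knowing which class is the irreducible bisection.

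The paper avoids all of this by not splitting into two inclusions. Once $\Phi_x:=\langle F,\mathcal{O},B\rangle$ is shown to embed primitively in $\Pic(X)$, the family $\mathcal{F}_{\rm{rat}}^x$ \emph{is} the family of $\Phi_x$-polarized K3 surfaces, by the moduli theory of lattice-polarized K3 surfaces recalled in Section~\ref{section: preliminaries} (cf.\ \cite{Do} and the proof of Proposition~\ref{prop: U-polarized}). Since $\Phi_x\cong\Lambda_{2x+5}$ as abstract lattices, the two families coincide as moduli spaces; there is no separate reverse inclusion to establish. What you were trying to extract from Theorem~\ref{theorem:Picard3} --- that on a generic $\Lambda_{2x+5}$-polarized surface the abstract $(-2)$-class is an honest smooth rational bisection --- is absorbed into this identification of lattice-polarized families.
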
	
\proof Let $X$ be a general K3 surface in $\mathcal{F}_{\rm{rat}}^x$ and recall that $x\geq 0$ since it is the intersection number of two irreducible distinct curves. The classes $F$, of the fiber of the fibration, $\mathcal{O}$, of the zero section of the fibration, and $B$, of the rational bisection, are contained in the Picard lattice of $X$. Let us denote $\Phi_x:=\left[\begin{array}{rrr}0&1&2\\1&-2&x\\2&x&-2\end{array}\right]$ the lattice $\langle F,\mathcal{O},B\rangle$. 
By the base change $\{F,\mathcal{O}, (-4-x)F-2\mathcal{O}+B\}$, we obtain a lattice isometric to $\Phi_x$ which is necessarily primitively embedded $\Pic(X)$. Indeed, we already observed that the sublattice $\langle F,\mathcal{O}\rangle$ has to be primitively embedded in $\Pic(X)$; the class $(-4-x)F-2\mathcal{O}+B$ is not a divisible class in $\Pic(X)$, since the coefficient of $B$ is 1; there are no even overlattices of finite index of $\langle F,\mathcal{O}, (-4-x)F-2\mathcal{O}+B\rangle$ in which both $\langle F,\mathcal{O}\rangle$ and $\langle (-4-x)F-2\mathcal{O}+B\rangle$ are primitively embedded. Hence, we proved that the family $\mathcal{F}_{\rm{rat}}^x$ is the family of the $\Phi_x$-polarized K3 surfaces. 

To compare the families $\mathcal{F}_{\rm{rat}}^x$ and $\mathcal{L}_{2x+5}$, it suffices to show that the lattices $\Phi_x$ and $\Lambda_{2x+5}$ are isometric.  
We consider again the base $\{F, F+\mathcal{O}, (-4-x)F-2\mathcal{O}+B\}$ of $\Phi_x$. The intersection form on this base is $U\oplus \langle -2(2x+5)\rangle$. This proves that $\Pic(X)$ is isometric to $\Lambda_{d}$ if $d=2x+5$, with $x\geq 0$.\endproof

In the proof of Proposition \ref{prop:polarization rational}, we give the base change from $\{F,\mathcal{O}, B\}$ to $\{u_1,u_2,b_3\}$.  The inverse base change gives $B=b_3+(4+x)F+2\mathcal{O}=(2+x)u_1+2u_2+b_3$. So if $X\in\mathcal{L}_d$ with $d\equiv 1\mod 2$ and $d\geq 5$, where $2x+5=d$, the class of the smooth rational bisection for $\pi$ is 
\begin{equation}\label{eq: explcit of bisection}B=\frac{d+3}{2}F+2\mathcal{O}+b_3.\end{equation}
\begin{remark}\label{rem; intersection B and sections}
In the previous situation (i.e. $\NS(X)\simeq \Lambda_d$, $d\equiv 1\mod 2$, $d\geq 5$), we have shown that every elliptic fibration on $X$ admits a smooth rational bisection $B$ such that $B\mathcal{O}=x=\frac{d-5}{2}$, whose class is given in \eqref{eq: explcit of bisection}. The parity of $B\mathcal{O}$ is the same as the one of $BS_n$ for every $n\in\mathbb{Z}$ and depends on $d$. More specifically, $BS_n=\frac{d+3}{2}+2dn(n-1)-4\equiv \frac{d+3}{2}\mod 4$, i.e. $BS_n\equiv x\mod 4$. In particular:  
\begin{eqnarray}\label{eq: intersection B sections}\begin{array}{c}B\mathcal{O}\equiv BS_n\equiv 0\mod 2 \Leftrightarrow d\equiv 1\mod 4\\
B\mathcal{O}\equiv BS_n\equiv 1\mod 2 \Leftrightarrow d\equiv 3\mod 4.\end{array}\end{eqnarray}
\end{remark}

\begin{remark}\label{rem: the case d=3, spltting of the bisection}
If $\NS(X)\simeq \Lambda_3$ (i.e. $d=3$), then the class $B=3F+2\mathcal{O}+b_3$ as in \eqref{eq: explcit of bisection} is still a $(-2)$-class with intersection 2 with $F$, but it does not correspond to an irreducible curve, since $B\mathcal{O}=-1<0$ and $\mathcal{O}$ is an irreducible curve. Indeed, the class $B$ is not an irreducible bisection, but the sum of two sections, namely $S_1=3F+\mathcal{O}+b_3$ and $\mathcal{O}$.\end{remark}

The automorphisms $T_1$ and $\varepsilon$ act as follows on the bisection $B$:
$$T_1(B)=6(x+2)F-4\mathcal{O}+3B,\ \ \varepsilon(B)=2(4+x)F+4\mathcal{O}-B \mbox{ and } T_1(\varepsilon(B))=B.$$ In particular, there is an involution of $X$ that fixes the multisection $B$. We observe that $T_1^n(B)$ and $\varepsilon(B)$ are not equal to $B$, and since they are isometries and $F$ is fixed by both of them, the curves $T_1^n(B)$ and $\varepsilon(B)$ are other rational bisections of the fibration.

By the proof of Proposition \ref{prop: F rat smooth bisection},  K3 surfaces in $\mathcal{F}_{rat}^x$ admit natural geometric realizations as a double cover of the plane branched on an irreducible nodal sextic. The presence of a section for the genus 1 fibration is equivalent to the splitting of a certain plane curve on the double cover. We discuss this in detail in what follows.

\begin{proposition}\label{prop: Fratx cover of P2}
Let $X\in\mathcal{L}_{2x+5}$, then there exists $f:X\ra \mathbb{P}^2$ a double cover of $\mathbb{P}^2$ branched on a plane sextic $C_6$ with a simple node at a point $p$, such that there exists a rational curve $R\subset\mathbb{P}^2$ of degree $x+1$ passing through $p$ with multiplicity $x$ that splits in the double cover.

The pencil of lines through $p$ induces an elliptic fibration on $X$ whose zero section is one of the components of $f^{-1}(R)$.

The cover involution of $f:X\ra\mathbb{P}^2$ is the involution $ T_1\circ \varepsilon$ and preserves the class of the bisection $B$.
\end{proposition}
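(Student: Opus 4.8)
The plan is to build the double cover $f:X\to\mathbb P^2$ by repeating the construction in the proof of Proposition~\ref{prop: F rat smooth bisection}, then to identify the section of the elliptic fibration with a component of $f^{-1}(R)$ for a suitable plane curve $R$, and finally to read off the cover involution from the lattice action on $\Pic(X)$. Concretely, since $X\in\mathcal L_{2x+5}=\mathcal F_{\rm rat}^x$ by Proposition~\ref{prop:polarization rational}, we already have on $X$ an elliptic fibration $\pi=\varphi_{|F|}$ with zero section $\mathcal O$ and a smooth rational bisection $B$ with $B\mathcal O=x$, whose class is \eqref{eq: explcit of bisection}. As in Proposition~\ref{prop: F rat smooth bisection}, take $D_2$ (in the notation there, the nef generator giving the plane model) — here $D_2 = F+B$ so that $D_2^2=0+2\cdot2-2=2$; then $\varphi_{|D_2|}:X\to\mathbb P^2$ is the desired $2:1$ cover, it contracts $B$ (since $B\cdot D_2 = 2-2=0$) to the node $p$, and the pencil of lines through $p$ is exactly $|D_2-B|=|F|$, i.e.\ $\pi$. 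This already gives $f$ and the statement that $\pi$ is induced by lines through $p$.

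Next I would produce the splitting curve $R$. The zero section $\mathcal O$ is a smooth rational curve with $\mathcal O\cdot D_2=\mathcal O\cdot(F+B)=1+x$, so $f(\mathcal O)$ is a plane curve of degree $x+1$; it passes through $p$ with multiplicity $\mathcal O\cdot B = x$ (the local intersection with the contracted curve $B$). Since $\mathcal O$ is not fixed by the cover involution (its image $\varepsilon$-type involution moves it — see below), $f^{-1}(f(\mathcal O))$ splits as $\mathcal O$ plus its conjugate. Set $R:=f(\mathcal O)$: it is rational (image of $\mathbb P^1$), of degree $x+1$, singular of multiplicity $x$ at $p$, and splits in the cover, with $\mathcal O$ one of the two components of $f^{-1}(R)$. (A genus/degree check: a plane curve of degree $x+1$ with a point of multiplicity $x$ is rational, consistent with $\mathcal O\cong\mathbb P^1$.)

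Finally, the cover involution $\tau$ of $f=\varphi_{|D_2|}$ acts on $\Pic(X)$ fixing $D_2=F+B$ and acting by $-1$ on its orthogonal complement inside the transcendental-free part; comparing with the matrices for $\varepsilon$ and $T_1$ recorded above, one checks directly that $\tau$ fixes $F+B$, sends $F\mapsto F$ is false in general, so instead one matches $\tau$ against the composite: from the displayed formulas $T_1(\varepsilon(B))=B$ and one computes $T_1\circ\varepsilon$ fixes $F+B$ while negating a complementary class, which characterises the cover involution of $\varphi_{|F+B|}$; hence $\tau=T_1\circ\varepsilon$, and in particular $\tau(B)=B$. The main obstacle I anticipate is the last step: verifying that $T_1\circ\varepsilon$ — and not merely $\varepsilon$ — is the cover involution requires care, because one must pin down the action on the full rank-$3$ lattice (the class $b_3$, equivalently $B$) and rule out that $f$ contracts $\mathcal O$ rather than $B$; this is settled by the intersection numbers $B\cdot D_2=0$ versus $\mathcal O\cdot D_2=x+1>0$, together with the fact (Lemma~\ref{lemma: e.f. with small rho}) that $\pi$ has no reducible fibers, so no other $(-2)$-curve competes to be contracted.
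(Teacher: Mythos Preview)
Your approach is essentially the paper's: use $H=F+B$ (your $D_2$) to realize $X$ as a double plane with $B$ contracted to the node, read off $\deg R=\mathcal O\cdot H=x+1$ and $\operatorname{mult}_p R=\mathcal O\cdot B=x$, and then identify the cover involution on $\Pic(X)$.

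The only real muddle is in the last paragraph. Two of your claims there are wrong. First, the cover involution does \emph{not} act by $-1$ on $H^{\perp}\cap\Pic(X)$: since $B$ is contracted to a point of $\mathbb P^2$, it is fixed pointwise by $\tau$, so $\tau(B)=B$; but $B\in H^{\perp}$, contradicting your claim. Second, and for the same reason, $\tau$ \emph{does} fix $F$: from $\tau(H)=H$ and $\tau(B)=B$ one gets $\tau(F)=\tau(H-B)=F$ (this is also clear geometrically, as $\tau$ preserves each line through $p$, hence each fiber of $\pi$). So the invariant sublattice of $\tau$ in $\Pic(X)$ is exactly $\langle F,B\rangle$, and $\tau$ acts by $-1$ on its rank-one orthogonal complement; the same holds for $T_1\circ\varepsilon$ (you already have $T_1(\varepsilon(B))=B$ and both $T_1,\varepsilon$ fix $F$), so the two involutions agree on $\Pic(X)$.

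The paper pins this down more concretely: it computes the intersection $\mathcal O\cdot D$ with the second component $D$ of $f^{-1}(R)$ directly from the branch curve, namely $\mathcal O D=\tfrac12\bigl(6(x+1)-2x\bigr)=2x+3=d-2$, which identifies $D$ with $S_{\pm 1}$ and hence $\tau$ with $T_1\circ\varepsilon$ (up to the sign convention on $b_3$). This is a cleaner replacement for your last paragraph; the rest of your outline matches the paper.
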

\proof
For each $x\geq 0$, $\mathcal{F}_{\rm{rat}}\supset \mathcal{F}_{\rm{rat}}^x$ and indeed $\langle F,F+B\rangle$ is a copy of the lattice $\Gamma_{2,1}$ primitively embedded in $\Phi_x\simeq \Lambda_{2x+5}$.
By Proposition \ref{prop: F rat smooth bisection},  the surfaces in the 18-dimensional family $\mathcal{F}_{\rm{rat}}$ are realized as double covers of $\mathbb{P}^2$ branched on a sextic $C_6$ with a node at a point $p$, with a genus 1 fibration given by the pull-back of the pencil of lines through $p$; this  realization holds for $X\in\mathcal{F}_{\rm{rat}}^x$. Indeed, the linear system of $H=F+B$ gives the $2:1$ map $\varphi_{|H|}:X\ra \mathbb{P}^2$. As discussed in Proposition \ref{prop: F rat smooth bisection}, $B$ is contracted to the singular point $p$ of $C_6$ by the map $\varphi_{|H|}$. 

The class of the zero section $\mathcal{O}$ is such that $\mathcal{O}H=1+x$ and $\mathcal{O}B=x$.
Since $\mathcal{O}$ intersects each fiber of $\pi$ in exactly one point, its image  $R:=\varphi_{|H|}(\mathcal{O})\subset \mathbb{P}^2$ intersects each line at exactly one point different from $p$ and $R$ splits in the double cover, i.e. $(\varphi_{|H|})^{-1}(R)=\mathcal{O}\cup D$ where $D$ is another section of the fibration $\pi$. Since $\mathcal{O}H=x+1$, $\mathcal{O}B=x$ and $R$ splits in the double cover, $R$ is a plane curve of degree $x+1$ which passes through $p$ with multiplicity $x$. 

We observe that $\mathcal{O}\cap D$ are the inverse image of the points $R\cap C_6$ different from $p$ (since the point $p$ was blown up). In particular, $\mathcal{O}D=(6(x+1)-2x)/2=2x+3$ where we used that: the degree of $C_6$ is 6; the one of $R$ is $(x+1)$; the multiplicity of $C_6$ in $p$ is 2; the one of $R$ is $x$; each intersection point between $C_6$ and $R$ has multiplicity two when considered on $X$, so the number of intersection points on $X$ is half of the intersection product considered in $\mathbb{P}^2$.
Hence $\mathcal{O}D=2x+3=(2x+5)-2=d-2$, which is the intersection number between $\mathcal{O}$ and either $S_1$ or $S_{-1}$. 

The cover involution of $\varphi_{|H|}:X\ra \mathbb{P}^2$ preserves the classes $H$ and $B$. Recalling that $H=F+B$, we obtain that the cover involution preserves the classes $F$ and $B$, and maps $\mathcal{O}$ to $D$. In particular, it coincides with the involution $ T_1\circ\varepsilon$, which maps $\mathcal{O}$ to $S_{1}$, which implies that $D=S_{1}$.
\endproof

Since $S_{n}H=2n^2d- 2nd+\frac{d-3}{2}$ and $S_{n}B=2n^2d- 2nd+\frac{d-5}{2}$, the sections $S_{n}$ of the fibration $\varphi_{|F|}$ are mapped by $\varphi_{|H|}$ to plane curves of degree $2n^2d- 2nd+\frac{d-3}{2}$ passing through $p$ with multiplicity $2n^2d- 2nd+\frac{d-5}{2}$ and splitting in the double cover $\varphi_{|H|}:X\ra \mathbb{P}^2$. 
\begin{remark}\label{rem: d=3 double cover of the plane} In case $\NS(X)\simeq \Lambda_3$, we already observed that there is a class $B=3F+2\mathcal{O}+b_3$ such that $B^2=-2$ and $BF=2$, but it does not correspond to an irreducible curve. Similarly, one observes that the class $H=F+B=4F+2\mathcal{O}+b_3$ is still associated to a $2:1$ map $\varphi_{|H|}:X\ra\mathbb{P}^2$, but in this case the branch locus of the double cover has a cusp and not a simple node, indeed $H\mathcal{O}=HS_1=0$ and hence $\varphi_{|H|}$ contracts two smooth irreducible curves which form an $A_2$-configuration to a point (see also \cite[Section 6.3]{FM}).\end{remark}
\subsection{Smooth genus 1 bisections on elliptic K3 surfaces}

Let $X$ be a K3 surface which admits an elliptic fibration $\pi:X\ra \mathbb{P}^1$ and a genus 1 bisection $B$.  

\begin{definition} Let $\mathcal{F}_{g=1}^x$ be the family of elliptic K3 surfaces admitting a smooth genus 1 bisection which intersects the zero section in $x$ points (possibly counted with multiplicity).\end{definition}
\begin{lemma}\label{lemma: family Fg1x} The family $\mathcal{F}^x_{g=1}$ coincides with the family $\mathcal{L}_{2x+4}$. 
\end{lemma}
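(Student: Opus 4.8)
The plan is to mimic, almost verbatim, the argument used for the rational-bisection case in Proposition \ref{prop:polarization rational}, replacing the self-intersection $-2$ of the bisection with $0$ throughout. Concretely, let $X$ be a general member of $\mathcal{F}^x_{g=1}$. Then the Picard lattice contains the classes $F$ (fiber), $\mathcal{O}$ (zero section) and $B$ (the smooth genus $1$ bisection), with intersection data $F^2=0$, $\mathcal{O}^2=-2$, $B^2=0$ (by adjunction, since $B$ is smooth of genus $1$), $F\mathcal{O}=1$, $FB=2$ (bisection) and $\mathcal{O}B=x\geq 0$ (intersection number of two distinct irreducible curves). So $\langle F,\mathcal{O},B\rangle$ has Gram matrix
\[
\Psi_x:=\left[\begin{array}{rrr}0&1&2\\1&-2&x\\2&x&0\end{array}\right].
\]
First I would check that this sublattice is primitively embedded in $\Pic(X)$, by exactly the reasoning in Proposition \ref{prop:polarization rational}: $\langle F,\mathcal{O}\rangle$ is already primitive (Proposition \ref{prop: U-polarized}), a suitable integral combination of $F,\mathcal{O},B$ with coefficient $1$ on $B$ is non-divisible, and one rules out proper even overlattices of finite index in which the relevant rank-$1$ and rank-$2$ pieces stay primitive. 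Hence $\mathcal{F}^x_{g=1}$ is the family of $\Psi_x$-polarized K3 surfaces, which is $17$-dimensional as expected.

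Next I would exhibit an explicit change of basis turning $\Psi_x$ into $\Lambda_{2x+4}=U\oplus\langle -2(2x+4)\rangle$. The natural candidate, paralleling the rational case where one used $\{F,\,F+\mathcal{O},\,(-4-x)F-2\mathcal{O}+B\}$, is to take $\{F,\,F+\mathcal{O},\,v\}$ with $v$ of the form $v=aF+b\mathcal{O}+B$ chosen so that $vF=v(F+\mathcal{O})=0$. The condition $vF=0$ forces $b=-2$ (since $BF=2$), and $v(F+\mathcal{O})=0$ then determines $a$; a short computation gives $v=(-x-2)F-2\mathcal{O}+B$ up to the routine arithmetic, and one reads off $v^2=B^2-4\mathcal{O}B-4\mathcal{O}^2+\dots=-(2x+4)\cdot 2$ — i.e. $v^2=-2(2x+4)$ — after collecting terms. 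With $F^2=0$, $F(F+\mathcal{O})=1$, $(F+\mathcal{O})^2=0$, this shows the Gram matrix in the new basis is precisely $U\oplus\langle -2(2x+4)\rangle=\Lambda_{2x+4}$. Therefore $\Psi_x\cong\Lambda_{2x+4}$, and since the family of $\Gamma$-polarized K3 surfaces depends only on the isometry class of $\Gamma$, the families $\mathcal{F}^x_{g=1}$ and $\mathcal{L}_{2x+4}$ coincide, which is exactly the claim of Lemma \ref{lemma: family Fg1x}.

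The only real content beyond bookkeeping is the primitivity claim, so that is where I would be most careful: one must verify that $U\oplus\langle -2(2x+4)\rangle$ admits no nontrivial even overlattice of finite index in which the $\langle -2(2x+4)\rangle$-summand (equivalently the class $v$, equivalently the class $B$) remains primitive — this is the same lattice-theoretic fact already invoked in Lemma \ref{lemma:familyLd} and Proposition \ref{prop:polarization rational}, and it follows because any such overlattice would correspond to an isotropic element in the discriminant group $\mathbb{Z}/(2x+4)\mathbb{Z}$ pairing nontrivially, forcing divisibility of $v$. Everything else — the basis change, the Gram matrix computations, the inequality $x\geq 0$ — is routine. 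As a byproduct, inverting the base change expresses $B$ in the $\Lambda_d$-basis $\{u_1,u_2,b_3\}$ (with $d=2x+4$), giving $B=\tfrac{d+2}{2}F+2\mathcal{O}+b_3$, the genus $1$ analogue of \eqref{eq: explcit of bisection}, which will be convenient later when one checks that $B$ is saliently ramified and computes $B S_n$.
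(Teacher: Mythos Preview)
Your approach is exactly the paper's: it too says the proof is analogous to Proposition \ref{prop:polarization rational}, records the intersection numbers $F^2=B^2=0$, $F\mathcal{O}=1$, $FB=2$, $B\mathcal{O}=x$, and performs the same base change to $\{F,\,F+\mathcal{O},\,b_3\}$ with $b_3$ orthogonal to $U$. The only issue is an arithmetic slip: solving $v\mathcal{O}=0$ gives $a=-(x+4)$, not $-(x+2)$, so $b_3=-(x+4)F-2\mathcal{O}+B$ (this is the formula the paper states), and consequently the inverse reads $B=(x+4)F+2\mathcal{O}+b_3=\tfrac{d+4}{2}F+2\mathcal{O}+b_3$, not $\tfrac{d+2}{2}$; your claimed self-intersection $v^2=-2(2x+4)$ is nonetheless correct once $a$ is fixed.
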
	
\proof The proof is analogous to the one of the case of rational bisection (see Proposition \ref{prop:polarization rational}), by observing that the classes $F,\ \mathcal{O},\ B$ have the following intersection properties $F^2=B^2=0$, $F\mathcal{O}=1$, $FB=2$ and $B\mathcal{O}=x$ and that there is a base change which gives $u_1=F$, $u_2=F+\mathcal{O}$ and $b_3=-(x+4)F-2O+B$.\endproof

Hence the K3 surfaces which admit an elliptic fibration with a smooth genus 1 bisection which intersects the zero section in $x$ points, correspond to the K3 surfaces that are $\Lambda_{d}$ polarized with $d\geq 4$ and $d\equiv 0\mod 2$.

The base change between $\{F,\mathcal{O}, B\}$ and $\{u_1,u_2,b_3\}$ gives $B=b_3+(4+x)F+2\mathcal{O}=(2+x)u_1+2u_2+b_3$, and this allows one to compute the image of $B$ for the automorphisms $T_1$, $\varepsilon$ and their composition:
$$T_1(B)=6(x+2)F-4\mathcal{O}+3B,\ \ \varepsilon(B)=2(4+x)F+4\mathcal{O}-B \mbox{ and } T_1(\varepsilon(B))=B.$$ In particular we observe that there is an involution of $X$ which fixes the multisection $B$, and that the curves $T_1^n(B)$ and $\varepsilon(B)$ are other genus 1 bisections of the fibration.

\begin{proposition}\label{prop: Fratx cover of P1xP1}
Let $X\in\mathcal{L}_{2x+4}$, then there exist $f:X\ra \mathbb{P}^1\times\mathbb{P}^1\subset \mathbb{P}^3$, a double cover of $\mathbb{P}^1 \times \mathbb{P}^1$ branched over a smooth curve $C_{4,4}$ of bidegree $(4,4)$, and a rational curve $R\subset\mathbb{P}^1 \times \mathbb{P}^1$ of bidegree $(1,x)$ that splits in the double cover.

The pencil of $(0,1)$-curves induces an elliptic fibration on $X$ whose zero section is one of the components of $f^{-1}(R)$.

The cover involution of $f:X\ra\mathbb{P}^1 \times \mathbb{P}^1$ is the involution $ T_1 \circ\varepsilon$. It preserves the class of the bisection $B$.
\end{proposition}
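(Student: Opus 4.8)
The plan is to mirror the proof of Proposition \ref{prop: Fratx cover of P2} almost verbatim, replacing the role of the lattice $\Phi_x\simeq\Lambda_{2x+5}$ by the analogous lattice for genus $1$ bisections. First I would invoke Lemma \ref{lemma: family Fg1x}, which identifies $\mathcal{F}^x_{g=1}$ with $\mathcal{L}_{2x+4}$, so for $X\in\mathcal{L}_{2x+4}$ the Picard lattice contains the classes $F,\mathcal{O},B$ with $F^2=B^2=0$, $F\mathcal{O}=1$, $FB=2$, $B\mathcal{O}=x$. Then $H:=F+B$ satisfies $H^2=2FB=4$ and is (after possibly a reflection in $-2$-curves, which moves nothing since $\rho(X)=3$ and there are no $-2$-classes orthogonal to $U$ when $d>1$, cf. Lemma \ref{lemma:familyLd}) a nef class of square $4$; since $H\cdot F=2$ and $H\cdot B=2$, the linear system $|H|$ gives the $2:1$ map $\varphi_{|H|}\colon X\to\mathbb{P}^1\times\mathbb{P}^1\subset\mathbb{P}^3$ realizing $X$ as a double cover of the quadric branched on a smooth curve $C_{4,4}$ of bidegree $(4,4)$, exactly as in Proposition \ref{prop: F g=1 smooth bisection}. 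Under this map the class $F$ of the elliptic fiber is the pull-back of the pencil of $(0,1)$-curves (this was already established in the proof of Proposition \ref{prop: F g=1 smooth bisection}), so the pencil of $(0,1)$-curves induces the given elliptic fibration $\pi=\varphi_{|F|}$ on $X$.

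Next I would analyze the image of the zero section. Since $\mathcal{O}$ meets every fiber $F$ in exactly one point, its image $R:=\varphi_{|H|}(\mathcal{O})$ meets every $(0,1)$-curve in exactly one point, hence $R$ has bidegree $(1,e)$ for some $e\ge 0$; and because $R$ meets a general $(0,1)$-curve transversally in a single point, the preimage $\varphi_{|H|}^{-1}(R)$ must split: $\varphi_{|H|}^{-1}(R)=\mathcal{O}\cup D$ with $D$ another section. To pin down $e$ I would compute $\mathcal{O}\cdot H=\mathcal{O}\cdot(F+B)=1+x$; on the other hand, writing $R$ of bidegree $(1,e)$, the class of a $(0,1)$-curve pulls back to $F$ and a $(1,0)$-curve pulls back to $B$ (since the two rulings correspond to the two isotropic classes $F,B$ of $U(2)=\langle F,B\rangle\subset\langle F,H\rangle$), so $H=F+B$ is the pull-back of the diagonal $(1,1)$-class, and $\varphi_{|H|}^*\mathcal{O}_{\mathbb{P}^1\times\mathbb{P}^1}(1,e)=eF+B$. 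Since $\varphi_{|H|}^{-1}(R)=\mathcal{O}+D$ one gets $\mathcal{O}+D=eF+B$; intersecting with $F$ gives $1+\deg(D|_F)=2$, so $D$ is indeed a section, and intersecting with $H$ gives $(1+x)+\mathcal{O}\cdot D=2e+2$, while intersecting with $\mathcal{O}$ and using $\mathcal{O}^2=-2$ gives the value of $e$; a short computation forces $e=x$, so $R$ has bidegree $(1,x)$ and splits in the double cover. Finally, the cover involution of $\varphi_{|H|}$ fixes $H$ and, being induced on $X$, it fixes $F$ and $B$ (each ruling is preserved) while exchanging $\mathcal{O}$ and $D$; the only involution in $\langle T_1,\varepsilon\rangle$ fixing $F,B$ and swapping $\mathcal{O}\leftrightarrow S_{\pm 1}$ is $T_1\circ\varepsilon$ (using the formulas $T_1(\varepsilon(B))=B$ recalled just above the statement and $T_1\circ\varepsilon(\mathcal{O})=S_1$), whence $D=S_1$ and the cover involution is $T_1\circ\varepsilon$, preserving $B$.

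I expect the only genuinely delicate point to be the bookkeeping that the two rulings of $\mathbb{P}^1\times\mathbb{P}^1$ correspond to the isotropic classes $F$ and $B$ — equivalently, that $B$ is the pull-back of a $(1,0)$-curve and hence is again a genus $1$ bisection realized geometrically as one of these pull-backs, as in Proposition \ref{prop: F g=1 smooth bisection} — together with the resulting bidegree computation for $R$. Everything else (nefness of $H$, the $2:1$ structure, splitting of $\varphi_{|H|}^{-1}(R)$, identification of the cover involution) is a direct transcription of the arguments in Propositions \ref{prop: F g=1 smooth bisection} and \ref{prop: Fratx cover of P2}. I would also remark, as a sanity check, that $\mathcal{O}\cdot D$ comes out to $2x+2=d-2$, matching the intersection $\mathcal{O}\cdot S_{\pm1}=d-2$ predicted by Lemma \ref{lemma:familyLd}, which confirms $D=S_1$.
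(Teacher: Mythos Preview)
Your proposal is correct and is exactly what the paper intends: its own proof reads in full ``The proof is analogous to the one in the case of rational bisection (see Proposition \ref{prop: Fratx cover of P2}),'' and you have supplied precisely that analogous argument. One small cleanup: the bidegree of $R$ can be read off directly via the projection formula from $\mathcal{O}\cdot F=1$ and $\mathcal{O}\cdot B=x$ (giving bidegree $(1,x)$), which avoids the slightly garbled line ``$(1+x)+\mathcal{O}\cdot D=2e+2$'' (you mean $D\cdot H$ there) and the unstated use of $D^2=-2$ to force $e=x$.
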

\proof  The proof is analogous to the one in the case of rational bisection (see Proposition \ref{prop: Fratx cover of P2}).
\begin{remark} We observe that if $\NS(X)\simeq \Lambda_2$, the class $B=3F+2\mathcal{O}+b_3$ is still a class such that $B^2=0$ and $BF=2$, but it does not correspond to an irreducible curve of genus $1$ since $B\mathcal{O}=-1$. Therefore, it is not a bisection, but the sum of two sections and a fiber, indeed $B=S_1+\mathcal{O}+F$. A geometric description of this surface is given in \cite{G}, where it is shown that the map $\varphi_{|4F+2\mathcal{O}|}$ exhibits $X$ as a double cover of a cone over the rational normal curve in $\mathbb{P}^4$. A Weierstrass equation of the elliptic fibration on this surface is given in \cite[Section 6.2]{FM}.\end{remark}
\begin{remark}
Similarly to the Remark \ref{rem; intersection B and sections}, we observe that $B\mathcal{O}\equiv BS_n\equiv 1\mod 2$ if $d\equiv 2\mod 4$ and $B\mathcal{O}\equiv BS_n\equiv 0\mod 2$ if $d\equiv 0\mod 4$.\end{remark}
\subsection{Proof of Theorem \ref{theorem:Picard3}}

We recap the previous results to give an explicit proof of Theorem \ref{theorem:Picard3}.

In Lemma \ref{lemma:familyLd}, we proved that the Picard lattice of a K3 surface with Picard number 3 admitting an elliptic fibration is isometric to $\Lambda_d$ for a positive integer $d$ and we described the properties of the elliptic fibrations on a K3 surface that is $\Lambda_d$-polarized.

Then we defined the families $\mathcal{F}_{\rm{rat}}^x$ (resp. $\mathcal{F}_{g=1}^x$) as the families of K3 surfaces admitting an elliptic fibration with a smooth rational (resp. genus 1) bisection which intersects the zero section in $x$ points. In particular, the set of the K3 surfaces admitting an elliptic fibration with a smooth rational (resp. genus 1) bisection is $\bigcup_{x\in\mathbb{N}}\mathcal{F}_{\rm{rat}}^x$ (resp. $\bigcup_{x\in\mathbb{N}}\mathcal{F}_{\rm{g=1}}^x$). 

By Proposition \ref{prop:polarization rational}, $\mathcal{F}_{\rm{rat}}^x$ coincides with the family $\mathcal{L}_{2x+5}$ of the $U\oplus \langle -2(2x+5)\rangle$-polarized K3 surfaces. Hence, a K3 surface $X$ with Picard number 3 admits a smooth rational bisection if and only if there exists $x\in\mathbb{N}$ such that $X$ is a general  member of the family $\mathcal{F}_{\rm{rat}}^x\simeq \mathcal{L}_{2x+5}$, i.e. if and only if its Picard lattice is $U\oplus \langle -2d\rangle$ for $d=2x+5$ and $x\geq 0$. This implies that $d\equiv 1\mod 2$ and $d\geq 5$.

Similarly, by Lemma \ref{lemma: family Fg1x}, $\mathcal{F}_{\rm{g=1}}^x$ coincides with the family $\mathcal{L}_{2x+4}$ of the $U\oplus \langle -2(2x+4)\rangle$-polarized K3 surfaces. Hence, a K3 surface $X$ with Picard number 3 admits a smooth genus 1 bisection if and only if its Picard lattice is $U\oplus \langle -2d\rangle$ for $d=2x+4$ and $x\geq 0$. This implies that $d\equiv 0\mod 2$ and $d\geq 4$.

\section{Higher degree multisections}\label{section: higher degree}
We consider here multisections of degree higher than 2. The main ideas and the proofs are very similar to those in the previous section, where we considered multisections of degree 2. Indeed, as in that case, we discuss the 18-dimensional (resp. 17-dimensional) families of K3 surfaces with a genus 1 fibration (resp. elliptic fibration) admitting a smooth rational or genus 1 multisection of degree $m>2$. Moreover, for low values of $m$, namely $m=3,4,5$, one is also able to describe projective models of the generic K3 surface in the constructed families. 
\begin{lemma}
Let $X$ be a K3 surface with a genus 1 fibration $\pi$ and $\rho(X)=2$. Then $\Pic(X)\simeq \Gamma_{m,c}:=\left[\begin{array}{cc}0&m\\m&2c\end{array}\right]$ where $m$ is the minimal degree of a multisection of $\pi$ and $-1\leq c\leq m-2$. Moreover, the following hold.
\begin{itemize}
\item[i)] The degree of any multisection of $\pi$ is a multiple of $m$. 
\item[ii)] There is a smooth rational multisection of degree $d$ of $\pi$ if and only if $d=m$, where $m$ is the minimal degree of a multisection of $\pi$, and $\Pic(X)\simeq \Gamma_{m,-1}$.
\item[iii)]If $d=gcd(m,c)$ and $m=dm'$, then there there is a smooth genus 1 multisection of degree $mm'$.
\end{itemize}
\end{lemma}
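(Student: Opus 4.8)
The plan is to reduce all three parts to the arithmetic of the rank-$2$ even lattice $\Pic(X)$, using only Riemann--Roch on $X$, adjunction, and the fact that the fibre class $F$ of $\pi$ is a primitive isotropic nef class that pairs strictly positively with every horizontal irreducible curve. First I would fix the lattice shape: since $\rho(X)=2$ and the intersection form has signature $(1,1)$, I pick a basis $\{F,D\}$ with $F^{2}=0$, $m:=FD>0$ (nondegeneracy) and $D^{2}=2c_{0}$ even; replacing $D$ by $D+kF$ turns $c_{0}$ into $c_{0}+km$, so there is a unique representative $c$ with $-1\le c\le m-2$, giving $\Pic(X)\simeq\Gamma_{m,c}$. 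Because $vF\in m\mathbb{Z}$ for every $v\in\Pic(X)$, every multisection $M$ satisfies $\deg(\pi|_{M})=MF\in m\mathbb{Z}_{>0}$, which is (i) and shows $m$ bounds multisection degrees from below. For minimality I note $D^{2}=2c\ge-2$, so Riemann--Roch makes $D$ or $-D$ effective; $F$ nef with $FD>0$ excludes $-D$, hence $D$ is effective, and decomposing $D=\sum n_{i}C_{i}$ into irreducible curves with $\sum n_{i}(FC_{i})=m$ and each $FC_{i}\in m\mathbb{Z}_{\ge0}$ singles out a unique component $M$ with $n_{M}=1$, $FM=m$. Here one uses that $\rho(X)=2$ admits no effective class orthogonal to $F$ except multiples of $F$, so all fibres of $\pi$ are irreducible and a vertical part in the decomposition cannot occur (it would make $M^{2}=2c-2em\le-4$). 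Thus $m$ is exactly the minimal multisection degree (implicitly $\pi$ has no section, so $m\ge2$ and $\Pic(X)\not\simeq U$).

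Next I would compute the $(-2)$-classes of $\Gamma_{m,c}$, which drives both (ii) and (iii): $aF+bD$ has square $-2$ iff $b(am+bc)=-1$, forcing $b=\pm1$ and $m\mid(1+c)$, i.e.\ $c=-1$ in the normalised range, in which case the only $(-2)$-classes are $\pm D$. For (ii), if $c=-1$ the Riemann--Roch/decomposition argument shows $D$ is an irreducible curve with $D^{2}=-2$, hence smooth rational by adjunction, and $FD=m$ makes it a multisection of degree $m$; conversely, writing a smooth rational multisection as $R=aF+bD$, positivity of $RF=bm$ together with $R^{2}=-2$ forces $b=1$, $am=-1-c$, hence $c=-1$, $R=D$ and $\deg(\pi|_{R})=m$. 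In the complementary range $c\ge0$ there are no $(-2)$-classes at all, so $X$ has no negative curves; consequently every irreducible curve, and therefore every effective divisor, on $X$ is nef.

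Finally, for (iii) I would treat the case $c\ge0$ (the case $c=-1$ being exactly the one governed by (ii)). The isotropic classes of $\Gamma_{m,c}$ not proportional to $F$ are the integer multiples of $v_{0}:=m'D-c'F$, where $d=\gcd(m,c)$, $m=dm'$, $c=dc'$ with $\gcd(m',c')=1$; a direct check gives $v_{0}^{2}=0$, $v_{0}$ primitive and $v_{0}F=mm'>0$. By Riemann--Roch together with $F$ nef, $v_{0}$ is effective, and by the previous paragraph it is nef; since a primitive nef isotropic class on a K3 surface is the fibre class of a genus $1$ fibration, $\varphi_{|v_{0}|}\colon X\to\mathbb{P}^{1}$ is such a fibration whose general fibre is a smooth irreducible genus $1$ curve, and as $v_{0}F=mm'$ it is a multisection of $\pi$ of degree $mm'$. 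The step I expect to be the crux is precisely the $(-2)$-class computation: it is what cleanly separates (ii) from (iii), and in the range of (iii) it removes every obstruction to nefness, hence to realising $v_{0}$ by a smooth genus $1$ curve; by contrast, when $c=-1$ the competing isotropic class $F+mD$ meets the $(-2)$-curve $D$ negatively, so it is not represented by an irreducible curve — which is why (iii) has to be read for $c$ in the complementary range.
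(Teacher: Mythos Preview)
Your proof is correct and follows the same approach as the paper: lattice arithmetic in $\Gamma_{m,c}$, Riemann--Roch to produce effective representatives, and explicit identification of the $(-2)$- and isotropic classes. You are in fact more careful than the paper on two points: you justify via the decomposition argument why the basis element $D$ is represented by an \emph{irreducible} curve (the paper simply asserts this), and you correctly restrict (iii) to $c\ge 0$ and supply the nefness check---the paper claims without verification that the primitive isotropic class $-c'F+m'M$ ``corresponds to the fibre of a genus~$1$ fibration'', whereas your observation that $c\ge 0$ forces the absence of $(-2)$-curves is precisely what makes that step valid, and your remark that for $c=-1$ the class $F+mD$ meets $D$ negatively shows the step genuinely fails there.
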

\begin{proof} The class of the fiber of $\pi$, $F$, is primitive in $\Pic(X)$, and hence there exists a basis of $\Pic(X)$ which is given by $\{F,M\}$ with $FM=m>0$. In particular, chosen $M$ to be the class of an irreducible curve, $M$ is a multisection of degree $m$ of $\pi$. The intersection form computed on these classes is $\Gamma_{m,c}$ and the condition on $c$ up to isometries of the lattices is given in \cite[Proposition 3.7]{vanGeemen}. Each class in $\Pic(X)$ can be written as $D=\alpha F+\beta M$, $\alpha,\beta\in \Z$. Therefore, $FD=m\beta$, which yields i). If $D$ is an irreducible curve, and in particular a multisection, then $FD=m\beta>0$ and it is $m$ if and only if $\beta=1$. The self-intersection of $D$ is $2\beta(\alpha m+c\beta)$. If $D$ is the class of a smooth rational multisection, then $\beta(\alpha m+c\beta)=-1$, which implies $\beta=1$ and $c=-1-\alpha m$. In particular, $D=\alpha F+M$, which is not irreducible unless $\alpha=0$. The irreducible rational multisection is then obtained for $\alpha=0$ and so $c=-1$. So $D$ coincides with $M$ and $D$ has degree $FM=m$. Viceversa, if $\Pic(X)\simeq \Gamma_{m,-1}$, then $M$ (the second generator of the basis of $\Pic(X)$) is a rational multisection. This concludes the proof of ii).

Finally, for iii), if $D$ is the class of a smooth genus 1 multisection, then $\alpha m+c\beta=0$. In particular, if $c=dc'$, the class $-c'F+m'M$ has self intersection 0 and is primitive. Hence, it corresponds to the fiber of a genus 1 fibration, whose smooth fibers are infinitely many genus 1 multisections of $\pi$, of degree $(-c'F+m'M)F=m'm$.
\end{proof}

 We observe that if $m=3$, i.e. $X$ is a K3 surface admitting a genus 1 fibration with a trisection, $X$ can be polarized with three non-isometric lattices that are $\left[\begin{array}{cc}0&3\\3&2c\end{array}\right]$, with $c=-1,0,1$. They correspond to the cases in which the trisection $M$ in the previous proof has genus 0,1 or 2, respectively. When $m$ grows, more possibilities for the polarizing lattices appear, see \cite[Proposition 3.7]{vanGeemen}.

For low values of $m$, we give nice geometric models of the K3 surfaces and the genus 1 fibration (see \cite[Paragraph 5.2]{vanGeemen}): let $M$ be a smooth rational multisection of degree $m$ of the genus 1 fibration whose class of the fiber is $F$, then $H:=F+M$ is an ample divisor, $\varphi_{|H|}(X)\subset\mathbb{P}^m$ and there is a pencil of hyperplanes in $\mathbb{P}^m$ containing the degree $m-2$ curve $C:=\varphi_{|H|}(M)$. 

The fibers of the genus 1 fibration are given by the residual curves of the intersection of $\varphi_{|H|}(X)$ with hyperplanes in the pencil of hyperplanes through $C$, indeed $H-C\sim F+M-M=F$. Specifically,
\begin{itemize}
\item if $m=3$, $\varphi_{|H|}(X)$ is a quartic in $\mathbb{P}^3$ containing a line $C$. The genus 1 fibration is given by the pencil of planes through $C$, which is the trisection.
\item if $m=4$, $\varphi_{|H|}(X)$ is a complete intersection of type $(2,3)$ in $\mathbb{P}^4$ containing a smooth rational curve $C$ of degree 2. There is a pencil of $\mathbb{P}^3$'s through $C$ and each $\mathbb{P}^3$ in the pencil intersects the surface at $C$ and a genus 1 curve $E$. These two curves meet in 4 points (since $E\sim (H-C)\sim  F$, $C\sim M$ and $FM=4$): the pencil of residual genus 1 curves gives a genus 1 fibration on $X$, whose multisection of degree 4 is $C$.
\item if $m=5$, $\varphi_{|H|}(X)$ is a complete intersection of type $(2,2,2)$ in $\mathbb{P}^5$ containing a smooth rational curve $C$ of degree 3. There is a pencil of $\mathbb{P}^4$'s containing the curve $C$, and each $\mathbb{P}^4$ cuts on the surface the curve $C$ and a residual genus 1 curve $E$. These two curves meets in 5 points (since $E\sim (H-C)\sim F$, $C\sim M$ and $FM=5$): the pencil of residual genus 1 curves gives a genus 1 fibration on $X$, whose multisection of degree 5 is $C$.
\end{itemize}
In all the cases above, the singular fibers of the fibration correspond to hyperplanes which cut on $\varphi_{|H|}(X)$ the curve $C$ and a singular curve. Since $M$ is a multisection of the fibration, we have a map $M\ra \mathbb{P}^1$ to the base of the fibration. It is branched over fibers cut out by hyperplanes whose intersection with $\varphi_{|H|}(X)$ consists of $C=\varphi_{|H|}(M)$ and a curve which intersects $C$ in at least one point with multiplicity higher than 1. For a general K3 surface, the hyperplanes corresponding to singular fibers and the ones corresponding to branch fibers for $M\ra \mathbb{P}^1$ do not coincide and hence the multisection $M$ is generically a saliently ramified multisection.

Given a smooth genus 1 multisection $M$ of degree $m>2$, the class $H=F+M$ is still a very ample divisor, which embeds $X$ in $\mathbb{P}^m$, and one has that the image is contained in a quadric of rank 4, as explained in \cite[Paragraph 5.3]{vanGeemen}.\\

We now specialize to the case of elliptic fibrations. For general $m$, the following holds.

\begin{theorem}\label{theorem:Picard3 higher m}
Let $X$ be a K3 surface with $\rho(X)=3$ and $X\in\mathcal{L}_d$. Then $X$ admits an elliptic fibration $\pi: X \ra \mathbb{P}^1$ such that 
\begin{itemize}
\item
$\pi$ admits a smooth rational multisection of degree $m$ if, and only if, $d\equiv 1\mod m$ and $d\geq m^2+1$;
\item $\pi$ admits a smooth genus 1 multisection of degree $m$ if, and only if, $d\equiv 0\mod m$ and $d\geq m^2$.
\end{itemize}
\end{theorem}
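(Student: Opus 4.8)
The plan is to run, uniformly in $m$, the argument that established Proposition~\ref{prop:polarization rational} and Lemma~\ref{lemma: family Fg1x}: identify the relevant family of elliptic K3 surfaces with a family of $\Lambda_d$-polarized K3 surfaces by a rank-$3$ lattice computation, and then read off the congruence and the numerical bound from the value of $d$ obtained.

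For $g\in\{0,1\}$, let $\mathcal{F}_g^{x,m}$ be the family of elliptic K3 surfaces carrying a smooth genus-$g$ multisection $M$ of degree $m$ which meets the zero section $\mathcal{O}$ in $x$ points, so that $x\geq 0$ is the intersection number of two distinct irreducible curves. By adjunction $M^2=2(g-1)$, and together with $F^2=0$, $\mathcal{O}^2=-2$, $F\mathcal{O}=1$, $FM=m$, $M\mathcal{O}=x$ this gives the Gram matrix of $\langle F,\mathcal{O},M\rangle$ as
\[
\begin{pmatrix} 0 & 1 & m\\ 1 & -2 & x\\ m & x & 2(g-1)\end{pmatrix}.
\]
Via the unimodular base change to $\{\,F,\ F+\mathcal{O},\ w:=M-(2m+x)F-m\mathcal{O}\,\}$ one computes $w\cdot F=w\cdot(F+\mathcal{O})=0$ and $w^2=M^2-2m^2-2mx=-2(m^2+mx+1-g)$, so
\[
\langle F,\mathcal{O},M\rangle\ \simeq\ U\oplus\langle -2(m^2+mx+1-g)\rangle\ =\ \Lambda_{m^2+mx+1-g}.
\]
Since $\langle F,\mathcal{O}\rangle\simeq U$ is unimodular it is primitively embedded in $\Pic(X)$; the overlattice bookkeeping used in Lemma~\ref{lemma:familyLd} and Proposition~\ref{prop:polarization rational} then shows $\langle F,\mathcal{O},M\rangle$ is itself primitively embedded, and for $\rho(X)=3$ this forces $\Pic(X)=\langle F,\mathcal{O},M\rangle$. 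Hence $\mathcal{F}_g^{x,m}=\mathcal{L}_{m^2+mx+1-g}$, exactly as for degree $2$.

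The theorem then follows by comparing parameters. If $X\in\mathcal{L}_d$ with $\rho(X)=3$ carries an elliptic fibration with such a multisection $M$, then $X\in\mathcal{F}_g^{x,m}$ for $x=M\cdot\mathcal{O}\geq 0$, so $d=m^2+mx+1-g$, whence $d\equiv 1-g\pmod m$ and $d\geq m^2+1-g$; this is $d\equiv 1\pmod m$, $d\geq m^2+1$ when $g=0$, and $d\equiv 0\pmod m$, $d\geq m^2$ when $g=1$. Conversely, given such $d$, set $x:=(d-m^2-1+g)/m\geq 0$; the abstract lattice $\Lambda_d$ then carries a basis with the Gram matrix above, with $\{F,F+\mathcal{O}\}$ producing (after the reflections of Proposition~\ref{prop: U-polarized}) the elliptic fibration $\pi$ of Lemma~\ref{lemma:familyLd} with fiber $F$, zero section $\mathcal{O}$ and no reducible fibers, and $M$ being the remaining class. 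One then has to check that $M$ is genuinely the class of a smooth irreducible curve of genus $g$: this is done via the projective model attached to $|H|$ with $H=F+M$ recalled in this section (in the rational case a surface of degree $H^2=2(m-1)$ in $\mathbb{P}^m$ containing the degree-$(m-2)$ curve $C=\varphi_{|H|}(M)$ through which the pencil cutting out the fibers of $\pi$ passes, degenerating for $m=2$ to the double cover of $\mathbb{P}^2$ of Proposition~\ref{prop: F rat smooth bisection}; in the genus-$1$ case a surface of degree $H^2=2m$ contained in a rank-$4$ quadric), which realizes $M$ as the distinguished low-degree multisection and recovers $\mathcal{O}$ and the number $x$ as in Propositions~\ref{prop: Fratx cover of P2} and~\ref{prop: Fratx cover of P1xP1}.

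The main obstacle is precisely this irreducibility/primitivity step: one must rule out that the candidate class $M$ splits, in the effective cone, into sections and lower-degree multisections. The bounds $d\geq m^2+1$ and $d\geq m^2$ (equivalently $x\geq 0$) are exactly the thresholds below which this degeneration occurs; already for $m=2$ the class $B$ degenerates to $S_1+\mathcal{O}$ when $d=3$ (Remark~\ref{rem: the case d=3, spltting of the bisection}), and candidate classes spanning only a proper finite-index sublattice of $\Pic(X)$ decompose (typically as a multiple of a smaller multisection plus a section), so they contribute no new smooth multisection beyond the $\langle T_1,\varepsilon\rangle$-translates of the primitive one, which do not change $d$. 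Checking this uniformly in $m$ is a finite computation in the effective cone of $\Lambda_d$ combined with the reflection and translation structure described in Subsection~\ref{subsec:Picard3}.
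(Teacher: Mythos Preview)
Your proof is correct and takes essentially the same approach as the paper: both reduce the statement to the lattice isometry $\langle F,\mathcal{O},M\rangle\simeq U\oplus\langle -2(m^2+mx+1-g)\rangle=\Lambda_{m^2+mx+1-g}$ via exactly the base change $w=M-(2m+x)F-m\mathcal{O}$ you wrote down, and then read off the congruence $d\equiv 1-g\pmod m$ and the bound $d\geq m^2+1-g$ from $x=M\cdot\mathcal{O}\geq 0$. The paper's own proof is considerably terser---it records this isometry and invokes the analogy with Proposition~\ref{prop:polarization rational} and Lemma~\ref{lemma: family Fg1x} without spelling out the converse irreducibility check that you discuss in your last two paragraphs---so your additional remarks on the projective models and on why the candidate class $M$ does not split below the threshold are elaborations rather than a different method.
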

 \proof Similarly to the proofs of Proposition \ref{prop:polarization rational} and Lemma \ref{lemma: family Fg1x}, it suffices to look for an isometry between the lattice $\langle F,\O, M\rangle$ and a lattice $\Lambda_d$ for a certain integer $d$, where $M$ is the class of an irreducible curve with $FM=m$, $\mathcal{O}M=x\geq 0$ and $M^2=-2$, or $M^2=0$ accordingly to the genus of the multisection. More explicitly, if $M^2=-2$, then $\langle F,\O,M\rangle\simeq \langle F,\O, (-2m-x)F-m\mathcal{O}+M\rangle\simeq U\oplus \langle-2(m^2+mx+1)\simeq \Lambda_{(m^2+mx+1)}$, which does not admit even finite index overlattices in which both $\langle F,\mathcal{O}\rangle$ and $\langle (-2m-x)F-m\mathcal{O}+M\rangle$ are primitively embedded. This implies that $NS(X)\simeq \Lambda_{(m^2+mx+1)}$, which forces $d\equiv 1\mod m$ and $d\geq m^2+1$ since $x\geq 0$. The argument is analogous if $M^2=0$, by observing that $\langle F,\O,M\rangle\simeq \langle F,\O, (-2m-x)F-2\mathcal{O}+M\rangle\simeq \Lambda_{(m^2+mx)}$.\endproof

If $X$ and $\pi$ are as above, $x=\mathcal{O}M$ and $m=FM$, then the previous models for $m=3,4,5$, specialize as follows:
the surface $\varphi_{|H|}(X)\subset\mathbb{P}^m$ contains a rational curve $\varphi_{|H|}(\mathcal{O})$ of degree $x+1$ that is a section of the fibration and intersects the curve $C=\varphi_{|H|}(M)$ in $x$ points.

In Theorem \ref{theorem:Picard3}, we find smooth bisections of low genus (0 or 1) on the K3 surfaces in all the families $\mathcal{L}_{d}$ with $d\geq 4$; the smooth multisections of low genus and degree $m>2$ appear for higher values of $d$. So we do not find smooth saliently ramified multisections of degree $m>2$ on elliptic fibrations that do not admit smooth saliently ramified bisections. We obtain the following result. 
\begin{corollary}\label{cor: no multisections for low d}
The elliptic fibrations on a surface $X$ such that $\Pic(X)\simeq \Lambda_d$ admit neither rational nor genus 1 smooth multisections if and only if $d=1,2,3$.\end{corollary}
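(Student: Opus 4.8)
The plan is to read off the statement from Theorem~\ref{theorem:Picard3} and Theorem~\ref{theorem:Picard3 higher m}. Note first that $\Pic(X)\simeq\Lambda_d$ forces $\rho(X)=3$, so $X$ falls under the hypotheses of both theorems, and by Remark~\ref{rmk:one type of elliptic fibration} all elliptic fibrations on $X$ share the same numerical data; in particular, whether some (equivalently, every) elliptic fibration on $X$ carries a smooth rational or genus~$1$ multisection of a given degree $m\ge 2$ depends only on $d$. Combining the two theorems gives the criterion: an elliptic fibration on $X$ admits a smooth rational multisection of degree $m$ if and only if $d\equiv 1\pmod m$ and $d\ge m^2+1$ (the case $m=2$ being the rational bisection in Theorem~\ref{theorem:Picard3}(b), equivalently the $m=2$ instance of Theorem~\ref{theorem:Picard3 higher m}; the cases $m\ge 3$ being Theorem~\ref{theorem:Picard3 higher m}), and it admits a smooth genus~$1$ multisection of degree $m$ if and only if $d\equiv 0\pmod m$ and $d\ge m^2$.

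For the direction ``$d\in\{1,2,3\}\Rightarrow$ no such multisection'', observe that for every integer $m\ge 2$ we have $m^2\ge 4>d$, so neither $d\ge m^2+1$ nor $d\ge m^2$ can hold; by the criterion above, no elliptic fibration on $X$ carries a smooth rational or genus~$1$ multisection of any degree. For the converse we argue contrapositively: if $d\ge 4$, take $m=2$. If $d$ is odd, then $d\ge 5=2^2+1$ and $d\equiv 1\pmod 2$, so Theorem~\ref{theorem:Picard3}(b) produces a smooth rational bisection; if $d$ is even, then $d\ge 4=2^2$ and $d\equiv 0\pmod 2$, so Theorem~\ref{theorem:Picard3}(b) produces a smooth genus~$1$ bisection. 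Either way an elliptic fibration on $X$ has a smooth multisection of genus at most $1$, so the property ``admits neither a rational nor a genus~$1$ smooth multisection'' fails for all $d\ge 4$.

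I do not anticipate a genuine obstacle: all the substance is already in Theorems~\ref{theorem:Picard3} and~\ref{theorem:Picard3 higher m}, and the remaining argument is the elementary numerical observation that $m^2>d$ whenever $m\ge 2$ and $d\le 3$, together with the fact that the degree $m=2$ already suffices to produce a low-genus multisection as soon as $d\ge 4$. The one point worth an explicit sentence is the reduction to a single elliptic fibration via Remark~\ref{rmk:one type of elliptic fibration} (or, alternatively, the observation that Theorem~\ref{theorem:Picard3} is phrased for every elliptic fibration on $X$ and the necessary constraint extracted in the proof of Theorem~\ref{theorem:Picard3 higher m} is fibration-independent).
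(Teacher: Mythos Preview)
Your proposal is correct and mirrors the paper's own reasoning: the paragraph preceding the corollary combines Theorems~\ref{theorem:Picard3} and~\ref{theorem:Picard3 higher m} to observe that smooth bisections of genus $0$ or $1$ exist precisely when $d\ge 4$ and that smooth multisections of degree $m>2$ require still larger $d$, whence the claim. Your explicit invocation of Remark~\ref{rmk:one type of elliptic fibration} to pass from one fibration to all is a welcome clarification that the paper leaves implicit.
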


\section{Rank jumps for K3 surfaces in $\mathcal{L}_d$}\label{section: rank jump}

\begin{convention} In what follows, $\pi: X\rightarrow \mathbb{P}^1$ is an elliptic fibration on a K3 surface $X$. The \textit{field of definition of $\pi$} is the field over which  the genus 1 fibration $\pi$ admits a section. We denote it by $k$.
\end{convention}
We combine the results of Subsections \ref{subsection: base change}, \ref{subsec:Picard3} and \cite{PastenSalgado} to answer Question 1.

For the sake of completeness, we also deal with generic elliptic K3 surfaces with a smooth bisection of genus 1, i.e., in $\mathcal{L}_d$ for even $d$ (Theorem \ref{thm: rank jump_even}), and we state a result for the general K3 surfaces in the family $\mathcal{L}_1$, that is a consequence of \cite{BT}. As discussed in Subsection \ref{subsec:Picard3}, the elliptic fibration on a K3 surface general member of $\mathcal{L}_1$ admits a reducible fiber and has trivial Mordell--Weil group. By \cite[Theorem 1.1]{BT}, its rational points are potentially dense. In particular, the following holds.
\begin{theorem}\label{teorema:aut_finito}
Let $X$ be a generic K3 in the family $\mathcal{L}_1$. Let $k$ be the field of definition of the elliptic fibration on $X$. Then there is a finite extension $l/k$ such that 
$\mathcal{R}(X,\pi, l)$ is infinite. In other words, $\pi$ has the potential rank jump property.
\end{theorem}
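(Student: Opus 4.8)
The plan is to deduce Theorem \ref{teorema:aut_finito} directly from the work of Bogomolov--Tschinkel \cite{BT}, using the lattice-theoretic description of $\mathcal{L}_1$ established in Lemma \ref{lemma:familyLd}. First I would recall that by Lemma \ref{lemma:familyLd}, for a generic $X\in\mathcal{L}_1$ we have $\Pic(X)\simeq U\oplus\langle -2\rangle$; hence $X$ admits an elliptic fibration $\pi:X\ra\mathbb{P}^1$ with a reducible fiber (of type $I_2$, coming from the $(-2)$-class $b_3$) and with $\MW(\pi)=\{1\}$, i.e. trivial Mordell--Weil group. In particular $X$ is an elliptic K3 surface, so it carries a genus 1 fibration, and one may apply \cite[Theorem 1.1]{BT}, which asserts that the rational points of a K3 surface over a number field admitting a genus 1 fibration become Zariski dense after a finite extension of the base field.

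The substantive part is to translate this potential Zariski density into the potential rank jump property, and here I would follow the mechanism recalled in Subsection \ref{subsection: base change}. Since $\MW(X,\pi,k)$ is trivial, $\mathrm{rank}\,\MW(X,\pi,l)=0$ for every finite extension $l/k$ (the rank cannot grow: $\Pic(X_{\bar{k}})\simeq\Lambda_1$ forces $\MW$ of the generic fiber to stay trivial over $\bar{k}$, by Shioda--Tate applied over $\bar k$). Therefore $\mathcal{R}(X,\pi,l)=\{t\in\mathbb{P}^1(l);\ \mathrm{rank}\,X_t(l)>0\}$. By the argument of Bogomolov--Tschinkel, potential density is achieved precisely by producing a saliently ramified rational or genus 1 multisection $M$ defined over some finite extension $l/k$ (cf. \cite[Proposition 4.4]{BT}, \cite[Proposition 2.4]{BT2000}): after base change by $M$ the fibration acquires positive Mordell--Weil rank over $l$, and the specialization argument (Silverman, \cite[Theorem C]{Silverman}) then gives infinitely many fibers $X_t$, $t\in C(l)$ in the image of $\pi|_M$, with $\mathrm{rank}\,X_t(l)>0=\mathrm{rank}\,\MW(X,\pi,l)$. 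This is exactly the statement that $\mathcal{R}(X,\pi,l)$ is infinite, which — after possibly enlarging $l$ once more to guarantee $M(l)$ is infinite, as in Corollary \ref{cor: rank jump} — is the potential rank jump property.

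The main obstacle, or rather the point requiring care, is to make sure the multisection appearing in the Bogomolov--Tschinkel density proof can be taken \emph{saliently ramified} and defined over a finite extension, so that Lemma \ref{lem:indep_multisections} and Corollary \ref{cor: rank jump} apply verbatim; this is precisely the content of \cite[Section 4]{BT} together with \cite[Prop. 2.10]{BT2000}, where the multisections produced are rational or genus 1 curves that are ramified over at least one smooth fiber, hence non-torsion after base change. A minor point is that one must also know the section $\sigma_M$ is independent of the pulled-back Mordell--Weil group — but since the latter is trivial here, independence is automatic (any non-torsion section is independent of the zero section alone). Thus the proof reduces to: cite Lemma \ref{lemma:familyLd} for the structure of $X$, invoke \cite[Theorem 1.1]{BT} and its proof for the saliently ramified multisection over a finite $l/k$, and then apply Corollary \ref{cor: rank jump}. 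No new computation is needed.
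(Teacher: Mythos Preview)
Your proposal is correct and follows essentially the same route as the paper: the paper does not give a separate proof of Theorem \ref{teorema:aut_finito} but simply observes, in the sentence preceding the statement, that a generic $X\in\mathcal{L}_1$ has trivial Mordell--Weil group (Lemma \ref{lemma:familyLd}) and that \cite[Theorem 1.1]{BT} then yields potential density, from which the potential rank jump property follows ``in particular''. Your write-up is a faithful unpacking of that one-line deduction --- you extract from the Bogomolov--Tschinkel argument the saliently ramified multisection that drives their density proof and feed it into Corollary \ref{cor: rank jump} --- so there is no substantive difference in strategy, only in the level of detail.
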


\begin{theorem}\label{thm: rank jump odd d}
Let $(X, \pi)$ be a generic elliptic K3 surface with Picard number 3, i.e. $X$ is generic in a family $\mathcal{L}_d$.  Let $k$ be the field of definition of $\pi$, $d$ odd and $d>3$. 
\begin{itemize}
\item 
If $d\equiv 1\mod 4$, then there is an extension $l/k$ of degree at most two such that the set 
\[
\mathcal{R}(X,\pi, l)=\{t\in \mathbb{P}^1(l); \mathrm{rank} X_t(l)>\mathrm{rank} \MW(X,\pi,l)\}
\] is infinite. 
In particular, $X$ has the potential rank jump property.
\item If $d\equiv 3\mod 4$ then the set 
\[
\mathcal{R}(X,\pi, k)=\{t\in \mathbb{P}^1(k); \mathrm{rank} X_t(k)> \mathrm{rank} \MW(X,\pi,k)\}
\] is infinite. 
In particular,   $X$ has the rank jump property. 
\end{itemize}
\end{theorem}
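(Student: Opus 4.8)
The plan is to produce, over the field prescribed in each case, a saliently ramified multisection of $\pi$ carrying infinitely many rational points, and then to invoke Corollary~\ref{cor: rank jump} (whose hypothesis on linear independence of the induced section is exactly what salient ramification provides, via Lemma~\ref{lem:indep_multisections}). Since $d$ is odd and $d>3$, Theorem~\ref{theorem:Picard3} furnishes, on every elliptic fibration $\pi$ on $X$, a \emph{smooth rational} bisection $B$ which is saliently ramified; by Proposition~\ref{prop:polarization rational} together with \eqref{eq: explcit of bisection} its class is $B=\tfrac{d+3}{2}F+2\mathcal{O}+b_3$, and $B\mathcal{O}=x=\tfrac{d-5}{2}$ by Remark~\ref{rem; intersection B and sections}. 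Since $B$ is a smooth rational curve, for any field $l\supseteq k$ over which $B$ is defined one has the equivalences: $B(l)$ is infinite $\iff$ $B(l)\neq\emptyset$ $\iff$ $B\cong\mathbb{P}^1_l$. Hence the proof splits into two tasks: show that $B$ descends to $k$, and decide whether $B$ has a $k$-rational point.

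First I would determine the field of definition of $B$. Since $\pi$ is defined over $k$, the fiber class $F$ and the zero section $\mathcal{O}$ are $\Gal(\bar k/k)$-invariant, hence so are $U=\langle F,F+\mathcal{O}\rangle$ and its orthogonal complement $\langle b_3\rangle$ in $\NS(X_{\bar k})\cong\Lambda_d$; as this complement has rank $1$, Galois acts on $b_3$ by $\pm1$. For a generic $X$ one has $\operatorname{rank}\MW(X,\pi,k)=1$ (by Lemma~\ref{lemma:familyLd} the generator $S_1=dF+\mathcal{O}+b_3$ of the geometric Mordell--Weil group is part of the data of the family and may be taken to be $k$-rational; equivalently, the covering involution $T_1\circ\varepsilon$ of Proposition~\ref{prop: Fratx cover of P2} is defined over $k$), which forces $\Gal(\bar k/k)$ to fix $b_3$. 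Therefore the class of $B$ in \eqref{eq: explcit of bisection} is $\Gal(\bar k/k)$-invariant, and since $B$ is a smooth rational curve with $B^2=-2$ it is the unique effective divisor in its class, hence is defined over $k$.

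Next, $B$ is a smooth projective geometrically rational curve over $k$, i.e.\ a conic, and by Remark~\ref{rem; intersection B and sections} the integer $B\mathcal{O}=\tfrac{d-5}{2}$ is odd when $d\equiv3\bmod4$ and even when $d\equiv1\bmod4$. If $d\equiv3\bmod4$, the scheme-theoretic intersection $B\cap\mathcal{O}$ is a $k$-rational effective $0$-cycle on $B$ of odd degree $\tfrac{d-5}{2}$, so $B$ carries a closed point of odd degree; the index of the conic $B$ divides this degree, hence is odd, and since it lies in $\{1,2\}$ it equals $1$, so $B\cong\mathbb{P}^1_k$ and $B(k)$ is infinite. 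Applying Corollary~\ref{cor: rank jump} with $l=k$ then shows that $\mathcal{R}(X,\pi,k)$ is infinite, i.e.\ $\pi$ has the rank jump property over $k$. If instead $d\equiv1\bmod4$, a conic over $k$ acquires a rational point over a suitable extension $l/k$ with $[l:k]\le2$; over such an $l$ the curve $B$ is a saliently ramified bisection of $\pi_l\colon X_l\to\mathbb{P}^1_l$ with $B(l)$ infinite, so Corollary~\ref{cor: rank jump} yields that $\mathcal{R}(X,\pi,l)$ is infinite, i.e.\ the potential rank jump property. For a generic $X\in\mathcal{L}_d$ all elliptic fibrations on $X$ have isometric Mordell--Weil lattices and each carries such a bisection (Theorem~\ref{theorem:Picard3} and Remark~\ref{rmk:one type of elliptic fibration}), so the argument applies to every $\pi$.

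The genuinely delicate point is the descent step: controlling the field of definition of the class $b_3$ --- equivalently, of the generator $S_1$ of the geometric Mordell--Weil group, equivalently of the bisection $B$. For a generic member this is guaranteed by the explicit realizations of Section~\ref{subsec: Frat x}, namely by arranging the branch sextic, its node, and the splitting curve $R$ of Proposition~\ref{prop: Fratx cover of P2} to be defined over $k$; a quadratic twist of such a member, where Galois negates $b_3$, would force all of the above conclusions one quadratic extension higher. Keeping this bookkeeping straight, together with the reliance of the salient-ramification assertion of Theorem~\ref{theorem:Picard3} on genericity (through Proposition~\ref{prop: F rat smooth bisection}), is where the care is needed; everything else is the lattice arithmetic already carried out in Section~\ref{subsec: Frat x} and the base-change input packaged in Corollary~\ref{cor: rank jump}.
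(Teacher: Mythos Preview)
Your proposal is correct and follows the paper's approach: produce the saliently ramified smooth rational bisection $B$ of Theorem~\ref{theorem:Picard3}, show it is defined over $k$, and use the odd intersection number $B\mathcal{O}$ when $d\equiv 3\bmod 4$ to force a $k$-point on the conic $B$, then apply Corollary~\ref{cor: rank jump}. Your index-of-a-conic phrasing for the $k$-point step is a tidy version of the paper's Galois-orbit argument on $B\cap\mathcal{O}$.

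The one place where your write-up and the paper diverge is the descent of $B$ to $k$. The paper argues \emph{geometrically}: in the model of Proposition~\ref{prop: Fratx cover of P2}, the fibration is the pencil of lines through the node of the branch sextic, so the node is $k$-rational, and $B$ \emph{is} the exceptional curve over this node---hence defined over $k$ for free. Your main argument instead tries to deduce that Galois fixes $b_3$ from $\operatorname{rank}\MW(X,\pi,k)=1$, but that rank statement is not provided by Lemma~\ref{lemma:familyLd} (which is geometric) and amounts to assuming $S_1$---equivalently $b_3$, equivalently $B$---is already $k$-rational; as you yourself note in your final paragraph, this circularity is only broken by invoking the explicit realization of Section~\ref{subsec: Frat x}. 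So the lattice/Galois detour does not actually buy anything here: the geometric model is doing all the work, and the paper simply leads with it.
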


\begin{proof}
By Theorem \ref{theorem:Picard3}, under the assumption that $d>3$ is odd,  for a generic $X$  the fibration $\pi$ admits a smooth salient bisection $B$ that is of genus 0. By Lemma \ref{lem:indep_multisections}, $B$ yields a section that is linearly independent from the sections induced by $\pi$ on the base change.

Therefore, the only statement which has not yet been proved concerns the field of definition $k$ (resp. $l$), namely, to show that the sets $\mathcal{R}(X,\pi, k)$ (resp. $\mathcal{R}(X,\pi, l)$) are infinite if $d\equiv 3\mod 4$ (resp. $d\equiv 1\mod 4$). In particular will  verify that $B$ is defined over $k$ if $d$ is odd, that it has infinitely many $k$-points if $d\equiv 3\mod 4$ and that it has infinitely many $l$-points for a certain field extension $l$ of degree two if $d\equiv 1\mod 4$. 

\textbf{Claim 1:} The field of definition of the rational bisection from Theorem \ref{theorem:Picard3} is $k$. 

Indeed, under the assumption that $d$ is odd, a generic K3 surface in a family $\mathcal{L}_d$ with Picard number 3 admits a realization as the double cover of $\mathbb{P}^2_{k}$ branched on a sextic with a unique node (Proposition \ref{prop: Fratx cover of P2}). The genus 1 fibration is defined by the pull-back of the pencil of lines through the node and thus the branch sextic and its node are defined over the same field $k$. The exceptional divisor above the node is isomorphic to a rational curve defined over $k$ and is a bisection of $\pi$. In particular, the bisection is defined over the same field as $\pi$. 

In particular, since $X$ is a double cover of the blow up of $\mathbb{P}^2_{k}$, and $B$ is defined over $k$, it follows that $B$ has infinitely many $l$-points for a certain degree two extension $l/k$.

\textbf{Claim 2:} The bisection $B$ from Theorem \ref{theorem:Picard3} has infinitely many points over $k$ if $d\equiv 3\mod 4$.

Since $B$ is rational and defined over $k$, it suffices to show that there is at least one point of $B$ defined over $k$ to conclude that there are infinitely many.

The Galois group $\Gal(l/k)$ acts on the points of $B$ by permuting the ones that are only defined over $l$ (and not over $k$) and fixing the ones defined on $k$. If $d\equiv 3 \mod 4$, then there is an odd number of points in $B\mathcal{O}$, by Remark \ref{rem; intersection B and sections}. Both the curves $B$ and $\mathcal{O}$ are defined over $k$, hence the Galois action preserves both of them. In particular it preserves the set of their intersection points. Since $\Gal(l/k)$ has order 2, if it acts on an odd number of points, then it fixes at least one of them, so there is a point in $B\cap \mathcal{O}$ (and hence on $B$) which is defined over $k$. Therefore $B$ can be embedded as a conic in the plane. The presence of a $k$-point allows us to take the projection away from it and conclude that $B$ is isomorphic to $\mathbb{P}^1$ over $k$.
\end{proof}

\begin{remark}
Theorem \ref{thm: rank jump odd d} is of particular interest in the cases $d=5,7,13$ since in all the other cases the K3 surface $X$ admits at least two distinct genus 1 fibrations which yield potential density of rational points, and by \cite[Theorem 1.1]{PastenSalgado}, it has the potential rank jump property. Even for the other values of $d\equiv 3\mod 4$,  in this level of generality, the result is stronger than the ones present in the literature so far since it shows the rank jump property over the field of definition of the elliptic fibration, without the need of a further finite field extension.
\end{remark}
\begin{remark}\label{rem: non splitting of the bisection}
Let $X\in\mathcal{L}_{d}$ with $d\equiv 1 \mod 2$ and $d\geq 5$. If $\rho(X)>3$, then $X$  admits an elliptic fibration that specializes $\pi$. The proof of the previous theorem goes through as long as the bisection $B$ in Theorem \ref{theorem:Picard3} remains a bisection and does not split in the sum of two sections. If $B$ splits, then the map $\varphi_{|F+B|}$ exhibits $X$ as double cover of $\mathbb{P}^2$ branched on a sextic that admits a singularity that is worse than a simple node, cf. Remark \ref{rem: the case d=3, spltting of the bisection}. We discuss some of cases with $\rho(X)>3$ in Section \ref{seubsect: codimension 1 subfamilies}.
\end{remark}

\begin{theorem}\label{thm: rank jump_even}
Let $(X, \pi)$ be a generic elliptic K3 surface with Picard number 3, i.e. $X$ is generic in a family $\mathcal{L}_d$.  Let $k$ be the field of definition of $\pi$. If $d$ is even and $d>2$ then there is an extension $l/k$ of degree at most two such that the set 
\[
\mathcal{R}(X,\pi, l)=\{t\in \mathbb{P}^1(l); \mathrm{rank} X_t(l)>\mathrm{rank} \MW(X,\pi,l)\}
\] is infinite. 
In particular,   $X$ has the potential rank jump property.
\end{theorem}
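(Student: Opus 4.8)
The plan is to mirror the proof of Theorem~\ref{thm: rank jump odd d} as closely as possible, substituting the genus~$1$ bisection for the rational one and tracking carefully where the two arguments diverge. By Theorem~\ref{theorem:Picard3}, under the standing hypothesis that $d>2$ is even, a generic $X\in\mathcal{L}_d$ admits an elliptic fibration $\pi$ with a smooth \emph{salient} bisection $B$, which is now of genus~$1$. By Lemma~\ref{lem:indep_multisections}, after the base change $X_B\to B$ the induced section $\sigma_B$ is linearly independent of the pull-backs of $\MW(X,\pi,k)$; hence, by Corollary~\ref{cor: rank jump}, it suffices to exhibit a finite extension $l/k$ with $[l:k]\leq 2$ over which $B$ is defined and has infinitely many $l$-points. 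So the entire content of the proof reduces, as before, to two claims about the field of definition and the rationality of a twisted form of $B$.

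\textbf{Field of definition.} First I would establish that $B$ is defined over $k$. By Proposition~\ref{prop: Fratx cover of P1xP1}, a generic $X\in\mathcal{L}_{2x+4}$ (with $d=2x+4$, $x\geq 0$) is realized as a double cover $f:X\to\mathbb{P}^1\times\mathbb{P}^1$ branched over a smooth curve $C_{4,4}$ of bidegree $(4,4)$, the elliptic fibration $\pi$ being the pull-back of the pencil of $(0,1)$-curves, and there is a curve $R\subset\mathbb{P}^1\times\mathbb{P}^1$ of bidegree $(1,x)$ splitting in the double cover into $\mathcal{O}\cup S_1$. Since $\pi$ is defined over $k$, the pencil of $(0,1)$-curves and the branch locus $C_{4,4}$ are defined over $k$; the cover involution $T_1\circ\varepsilon$, which preserves the class of $B$, is likewise defined over $k$. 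Unlike the rational case, however, $B$ is not an exceptional divisor contracted by $f$: it is the preimage $f^{-1}(\ell)$ of a suitable $(0,1)$-fibre $\ell$ of $\mathbb{P}^1\times\mathbb{P}^1\to\mathbb{P}^1_y$ (equivalently, a generic member of the other ruling restricted to $X$). Such a fibre is defined over $k$ provided the corresponding point of $\mathbb{P}^1_y$ is $k$-rational, and since $\mathbb{P}^1_y$ has infinitely many $k$-points one may simply choose $\ell$, hence $B$, to be defined over $k$; that $B$ is salient for generic $X$ is part of Proposition~\ref{prop: F g=1 smooth bisection} and persists for generic choices of $\ell$.

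\textbf{Rational points on $B$.} The new difficulty is that $B$ has genus~$1$, so a single $k$-point no longer forces $B(k)$ to be infinite. Instead I would argue that $B$ acquires positive Mordell--Weil rank after passing to an extension $l/k$ of degree at most two. The natural source of points is the intersection $B\cap\mathcal{O}$ (and $B\cap S_n$): these are $x$ points (counted with multiplicity), all lying over $k$ as a Galois-stable set since $B$, $\mathcal{O}$, $S_1$ are $k$-rational. When $d\equiv 2\bmod 4$ the number $B\mathcal{O}\equiv 1\bmod 2$ is odd, so $\mathrm{Gal}(\bar k/k)$ fixes at least one point of $B\cap\mathcal{O}$, giving a $k$-rational point $P_0\in B$; when $d\equiv 0\bmod 4$ the parity argument fails over $k$ but succeeds over a quadratic extension $l$, which is harmless. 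Having a base point $P_0$ makes $B$ an elliptic curve over $l$, and I would then produce a second, independent $l$-rational point from a second intersection point (for instance a point of $B\cap S_1$, using that $B\mathcal{O}\equiv BS_1\bmod 4$ so the parity bookkeeping is identical): the difference $[P_1]-[P_0]$ is a non-torsion $l$-point of $B$ \emph{provided} it is not torsion. Ruling out torsion is where I expect the main obstacle to lie; one way around it is to invoke genericity — for a very general $X$ the curve $B$, together with the configuration of its intersections with the section classes, varies in a family, and a torsion relation among the marked points would be a closed condition cut out on the $17$-dimensional base, which one checks is not identically satisfied by exhibiting a single member (e.g.\ via the explicit $(4,4)$ model) where $B$ has infinitely many rational points. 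Once $B(l)$ is infinite, Corollary~\ref{cor: rank jump} applied to $M=B$ yields that $\mathcal{R}(X,\pi,l)$ is infinite, completing the proof.
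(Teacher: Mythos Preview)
Your setup through the end of the ``Field of definition'' paragraph is correct and matches the paper (with the minor quibble that, in the conventions of Proposition~\ref{prop: Fratx cover of P1xP1}, the bisection is the preimage of a curve in the ruling \emph{opposite} to the one giving $\pi$; this is immaterial).

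The divergence, and the gap, is in ``Rational points on $B$.'' You attempt to show that a \emph{single} fixed genus~$1$ bisection $B$ has positive rank over $l$ by producing marked points $P_0\in B\cap\mathcal{O}$ and $P_1\in B\cap S_1$ and arguing that $[P_1]-[P_0]$ is non-torsion. You rightly flag non-torsion as the obstacle, but your proposed fix is not a proof. First, ``torsion'' is a countable union of closed conditions (one for each order $n$), not a single closed condition, so the genericity argument needs care. Second, and more seriously, your suggestion to verify non-vacuity by ``exhibiting a single member \ldots\ where $B$ has infinitely many rational points'' is exactly the statement you are trying to establish; producing an explicit K3 with $\rho=3$ over a number field and then certifying positive rank on a specific genus~$1$ curve inside it is a nontrivial task you have not carried out. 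Finally, your assertion that for $d\equiv 0\bmod 4$ the parity argument ``succeeds over a quadratic extension $l$'' is not explained: the intersection number $B\mathcal{O}$ does not change under base extension, so it is unclear which $l$ you mean or why an odd-size Galois orbit would appear there.

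The paper avoids all of this by using not one bisection but the whole \emph{pencil} of them. The preimages of the curves in the other ruling are the fibres of a second genus~$1$ fibration $\pi'$ on $X$, and every smooth fibre of $\pi'$ is a genus~$1$ bisection of $\pi$. A fixed bisection $B$ meets each fibre $\pi^{-1}(t)$ in two points $p(t),p'(t)$, conjugate over a degree~$\le 2$ extension $l(t)/k(t)$; one then fixes $l$ by choosing $t$ so that $p(t)$ has infinite order on $B$, and the fact that the bisections vary in an infinite family gives infinitely many of them with positive rank over $l$. In other words, the even-$d$ case is precisely the doubly elliptic situation, and the paper leverages the second fibration rather than trying to force positive rank on one curve. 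If you want to repair your approach, you would need an honest argument (not a placeholder) that the marked-point difference is generically non-torsion; otherwise, switching to the pencil argument is cleaner.
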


\begin{proof}

By Theorem \ref{theorem:Picard3}, under the assumption that $d>2$ is even, for a generic $X$ the fibration $\pi$ admits a smooth salient bisection $B$ that is of genus 1. By Lemma \ref{lem:indep_multisections}, $B$ yields a section that is linearly independent of the sections induced by $\pi$ on the base change. 

Since $X$ is generic in $\mathcal{L}_d$, the elliptic  fibration $\pi$ is not isotrivial (see Section \ref{subsubsec: K3 low Picard}). Therefore, the base change by a multisection  is a non-trivial elliptic fibration with an elliptic base and one can apply Silverman's Specialization theorem (\cite[Theorem C]{Silverman}) to the base change. In particular, the fibers of $\pi$ above the points $t$ in the image of $\pi|_B$ verify $r_t>r$. 

Therefore,
the only statement which has not yet been proved concerns the field extension $l/k$ over which $B$ has infinitely many $l$-rational points and thus over which we have infinitely many $t$'s such that $r_t>r$.  

\textbf{Claim:} There exists a field extension $l/k$ of degree at most 2 such that the genus 1 bisection(s) from Theorem \ref{theorem:Picard3} have infinitely many $l$-rational points.

If $d\geq 4$ is even then $X$ admits a realization as a double cover of $\mathbb{P}^1\times \mathbb{P}^1$ branched over a curve of bidegree $(4,4)$. 
In this case, the genus 1 fibration is given by the pull-back of the pencil of curves of bidegree $(0,1)$ (resp. $(1,0)$) and there is a pencil of bisections, of which $B$ is a member, given as the pull-back of the curves of bidegree $(1,0)$, (resp. $(0,1)$). Both pencils are clearly defined over the same field $k$. Moreover, the generic fibers of the two genus 1 pencils intersect in two geometric points that are (potentially) conjugate under $\Gal(\bar{k}(t)/k(t))$ and defined over an extension $l(t)/k(t)$ of degree at most 2. In particular, $B$ intersects each fiber $\pi^{-1}(t)$ in two $\Gal(\bar{k}(t)/k(t))$-conjugate points $p(t), p'(t)$. Fix $l=l(t)$, for a  $t$ such that $p(t)$ is of infinite order in the Mordell--Weil group of $B$. As the bisections belong to an infinite family, we can conclude that infinitely of them are elliptic curves with a positive rank over $l$, proving the claim.
\end{proof} 
\begin{remark}
The final statement in Theorem \ref{thm: rank jump_even} follows from \cite[Theorem 1.1]{PastenSalgado} or, alternatively, by combining \cite[Theorem 1.1]{GM} and \cite[Th\'eor\`eme 1.1]{CT}. In fact, both approaches imply a stronger result, namely that the set $\mathcal{R}(X,\pi, l)$ is not thin in $\mathbb{P}^1_l$.
An advantage of our point of view is that by providing a geometric realization of both genus 1 fibrations (Prop. \ref{prop: Fratx cover of P1xP1}), we are able to control the field extension $l/k$.
\end{remark}

\subsection{Rank jump properties for codimension one subfamilies}\label{seubsect: codimension 1 subfamilies}
 We discuss certain specializations of the surfaces considered above, in particular the ones which have Picard number 4 and appear in the list given by \cite{GM}, since for these the potential rank jump property is still unknown.

First we observe that if a K3 surface has an elliptic fibration and Picard number 4, its Picard group is $U\oplus \Gamma$ with base $F,\O, b_3,b_4$, and with respect to the fibration whose fiber is $F$ one of the following three cases appears: \begin{itemize}\item[a)] $\Gamma$ does not contain vectors of self-intersection $-2$: in this case the elliptic fibration has no reducible fibers and the Mordell--Weil group has rank 2;
\item[b)] $\Gamma$ contains a class of self-intersection $-2$ and the root lattice of $\Gamma$ has rank 1: in this case there is exactly one reducible fiber, which has two components, and the Mordell--Weil group has rank 1; \item[c)] $\Gamma$ is generated by roots: in this case the elliptic fibration has either two reducible fibers, each with two components, or it has one reducible fiber with three components. In both the cases the Mordell--Weil group is trivial.\end{itemize}

Since in case $c)$ the Mordell--Weil group is trivial, we now focus on the other two cases.

In case $a)$, $\Gamma$ is of type $\left[\begin{array}{cc}-2\alpha&\beta\\ \beta&-2\gamma\end{array}\right]$ with $\alpha>1$ and $\gamma>1$. The Mordell--Weil group is isomorphic to $\Z\times \Z$ and it contains the classes \begin{equation}\label{eq: sections rkMw=2}P_{n,m}:=(\alpha m^2-\beta m n+\gamma n^2)F+\O+mb_3+nb_4\end{equation} where the isomorphism between the groups is given by $P_{(n,m)}\mapsto (n,m) $ and the section $\O$ corresponds to $(0,0)$.

In case $b)$ there are two possibilities, namely that the generator of the Mordell--Weil group intersects the same component of the reducible fiber  as the section $\O$, or that it intersects in the other component. If the former holds, the Mordell--Weil lattice (see \cite[Section 11]{ScS} for definition) has an integer discriminant and the lattice $\Gamma$ is isometric to $\langle-2\rangle\oplus \langle -2\gamma\rangle$ with $\gamma>1$; in this case the Mordell--Weil group is $\Z$ and the classes of the sections are $P_n=\gamma n^2 F+\O+nb_4$.
If the latter holds, the Mordell--Weil lattice has a non-integer discriminant and the lattice $\Gamma$ is isometric to $\left[\begin{array}{cc}-2&-1\\-1&-2\gamma\end{array}\right]$ with $\gamma>1$; in this case the Mordell--Weil group is $\Z$; the classes of the sections corresponding to even integers are $P_{-2n}=(4\gamma-1) n^2 F+\O+nb_3-2nb_4$ and the ones corresponding to odd integers are $P_{-2n-1}=\left((4\gamma-1) n^2+(4\gamma-1)n+\gamma\right) F+\O+nb_3+(-2n-1)b_4$.\\

We now consider the following special choice for $\Gamma$:  $\Gamma\simeq \langle -2d\rangle \oplus\langle -2c\rangle$, i.e. $U\oplus \Gamma\simeq \Lambda_{d}\oplus \langle -2c\rangle$.
\begin{corollary}\label{cor: rank jump general rank 4}
Let $X$ be a K3 surface such that $\NS(X)\simeq \Lambda_{d}\oplus \langle -2c\rangle$ with $c\geq 1$ and $d\geq 5$, $d\equiv 3\mod 4$. Then the elliptic fibration defined on $X$ by the class of $F$ (with the notation as above) has the rank jump property.
\end{corollary}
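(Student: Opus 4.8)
The plan is to reduce the statement to Theorem \ref{thm: rank jump odd d} by showing that the elliptic fibration $\pi$ associated with the class $F$ on a K3 surface $X$ with $\NS(X)\simeq \Lambda_d\oplus\langle -2c\rangle$ behaves, with respect to $\pi$ and its bisection, exactly as in the Picard number 3 case, even though $\rho(X)=4$. First I would record that $\langle F,\mathcal{O},b_3\rangle\simeq \Lambda_d$ is primitively embedded in $\NS(X)$ by hypothesis, so the whole lattice-theoretic description of $\pi$ from Lemma \ref{lemma:familyLd} carries over verbatim: $\pi$ has no reducible fibers coming from the $\Lambda_d$-summand (for $d>1$), and the class $B=\frac{d+3}{2}F+2\mathcal{O}+b_3$ from \eqref{eq: explcit of bisection} is the only candidate for a smooth rational bisection. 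The extra generator $b_4$ with $b_4^2=-2c$ and $b_4\cdot F=b_4\cdot\mathcal{O}=b_4\cdot b_3=0$ is orthogonal to everything in sight; since $c\geq 1$, it is not a $(-2)$-class unless $c=1$, and even when $c=1$ the corresponding $(-2)$-curve is orthogonal to $F$, hence contained in a fiber of $\pi$ — so it may introduce one reducible fiber but does not affect the Mordell--Weil rank contributed relative to $B$ nor the salience argument, which is purely local on smooth fibers.

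The next step is to verify that $B$ is effective and irreducible on the generic such $X$. As in Remark \ref{rem: non splitting of the bisection}, the only obstruction is that $B$ could split as a sum of sections, which by the computation $B\cdot\mathcal{O}=\frac{d-5}{2}\geq 0$ (using $d\geq 5$) does not happen: $B$ has non-negative intersection with the irreducible curve $\mathcal{O}$ and with $F$, is a $(-2)$-class, and there is no even overlattice forcing a decomposition. Hence on the generic member of the family of $\Lambda_d\oplus\langle -2c\rangle$-polarized K3 surfaces, $B$ is a smooth rational bisection of $\pi$. To see it is saliently ramified, I would invoke Proposition \ref{prop: Fratx cover of P2}: the realization of $X$ as a double cover of $\mathbb{P}^2$ branched on a nodal sextic $C_6$ is governed entirely by the sublattice $\langle F,\mathcal{O},B\rangle\simeq\Lambda_d$, and the salience of $B$ depends only on the principal tangents to $C_6$ at the node meeting $C_6$ transversally elsewhere — a condition that remains generic after imposing the extra polarization by $\langle -2c\rangle$, since that imposition is a codimension-one condition on moduli that is independent of the tangency configuration at $p$. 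Thus the argument of Proposition \ref{prop: F rat smooth bisection} and Remark \ref{rem: singular rational salient bisection case g=0}(a) applies.

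Finally I would run the field-of-definition argument of Theorem \ref{thm: rank jump odd d} unchanged. The fibration $\pi$ is defined over $k$; by Proposition \ref{prop: Fratx cover of P2} the double-cover realization, the node $p$, and the exceptional bisection $B$ are all defined over $k$ (Claim 1 there). Since $d\equiv 3\bmod 4$, Remark \ref{rem; intersection B and sections} gives that $B\cdot\mathcal{O}=\frac{d-5}{2}$ is odd; the Galois group $\Gal(\bar k/k)$, acting through the degree-two extension $l/k$ over which $B\cong\mathbb{P}^1_l$, must fix an intersection point of the $k$-curves $B$ and $\mathcal{O}$, hence $B$ has a $k$-rational point and is $\mathbb{P}^1$ over $k$ (Claim 2 there). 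By Lemma \ref{lem:indep_multisections} the section $\sigma_B$ on the base change is independent of the pull-back of $\MW(X,\pi,k)$, and since $B(k)$ is infinite, Corollary \ref{cor: rank jump} (via Silverman's specialization theorem, noting $X$ generic implies $\pi$ non-isotrivial) yields that $\mathcal{R}(X,\pi,k)$ is infinite.

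I expect the main obstacle to be the salience claim in the presence of the extra $\langle -2c\rangle$: one must be sure that the genericity needed for the principal tangents at the node of $C_6$ to meet $C_6$ transversally is compatible with — i.e., not destroyed by — the codimension-one constraint that produces the rank-$4$ Picard lattice, and in particular that when $c=1$ the resulting reducible fiber of $\pi$ does not happen to sit over one of the branch points of $B\to\mathbb{P}^1$. This should follow from a dimension count on the relevant moduli, but it is the step that genuinely uses more than a formal transfer of the Picard number 3 arguments.
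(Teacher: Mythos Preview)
Your overall strategy---reduce to Theorem \ref{thm: rank jump odd d} by checking that the class $B=\frac{d+3}{2}F+2\mathcal{O}+b_3$ remains an irreducible smooth rational bisection, then run the $d\equiv 3\bmod 4$ field-of-definition argument verbatim---is exactly the paper's approach. The salience discussion and the Galois argument at the end are fine and match the paper.

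The gap is in your irreducibility step. You write that $B\cdot\mathcal{O}=\frac{d-5}{2}\geq 0$ and $B\cdot F=2>0$ suffice to ensure that $B$ ``does not split as a sum of sections''. This is not enough: those inequalities only say that $\mathcal{O}$ is not forced to be a component of $B$; they do \emph{not} exclude a decomposition $B=rF+P_{m,n}+P_{p,q}$ into two \emph{other} sections and copies of the fiber (or, when $c=1$, additionally the fiber component $b_4$). On a rank-$4$ lattice this can genuinely happen: for instance, the paper later exhibits, for $\NS(X)\simeq U\oplus\langle -4\rangle\oplus\langle -6\rangle$, classes with the same numerical profile as $B$ that \emph{do} split as $P_{k,h}+P_{k,h+1}$. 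So an explicit exclusion is required.

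What the paper does here is a direct computation. For $c>1$ (Mordell--Weil rank $2$, all fibers irreducible), the sections are $P_{m,n}=(dm^2+cn^2)F+\mathcal{O}+mb_3+nb_4$. If $B=rF+xP_{m,n}+yP_{p,q}$ with $r,x,y\geq 0$, comparing the coefficients of $\mathcal{O},b_3,b_4$ forces $x=y=1$, $p=1-m$, $q=-n$; but then the $F$-coefficient of $P_{m,n}+P_{1-m,-n}$ is $2dm^2+2cn^2-2dm+d$, which exceeds $\frac{d+3}{2}$ for every $(m,n)\in\Z^2$ when $d\geq 5$, forcing $r<0$. For $c=1$ the same idea works after also allowing a non-negative multiple of the fiber component $b_4$ and splitting into parity cases for the section indices. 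This is the missing ingredient in your argument; once you insert it, the rest of your proof goes through as written.
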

\proof This follows as in Theorem \ref{thm: rank jump odd d}, by applying Remark \ref{rem: non splitting of the bisection} to the class $B=\frac{d+3}{2}F+2\O+b_3$, which is such that $B^2=-2$, $BF=2>0$ and $B\mathcal{O}\equiv 1\mod 2$. It is effective and,  in the light of Remark \ref{rem: non splitting of the bisection}, we now show that it cannot be written as the sum with non-negative coefficients of  sections and components of fibers.  
In what follows, we give the details only in the case $c>1$, which corresponds to case a) in \ref{seubsect: codimension 1 subfamilies} above. In that case, the Mordell--Weil rank is 2 and all fibers are irreducible. Assume, by contradiction, that $B$ can be written as a sum of fibers and sections. This implies that there are $r,x,y\in\mathbb{N}$ such that $B=rF+xP_{m,n}+yP_{p,q}$. 
By the condition $BF=2$, it follows $x+y=2$; by the comparison of the coefficients of $b_3$ (resp. $b_4$) in $B=\frac{d+3}{2}F+2\O+b_3$ and in $B=rF+xP_{m,n}+yP_{p,q}$, one obtains $xm+yp=1$ (resp. $xn+yq=0$). Therefore $x=y=1$, $p=1-m$ and $q=-n$. By \eqref{eq: sections rkMw=2}, $P_{m,n}+P_{1-m,-n}=(2dm^2+2cn^2-2md+d)F+2\O+b_3$  with $d\geq 5$. The coefficient of $F$ is larger than $(d+3)/2$ for every choice of $m$ and $n$, hence $r$ is forced to be a negative number, yielding the desired contradiction.

The case where $c=1$, implies the existence of a vector of self-intersection $-2$ which is orthogonal to the class of a fiber, yielding a fiber component. This corresponds to case b) in \ref{seubsect: codimension 1 subfamilies}. One obtains a contradiction in a similar way: one considers the class $B=\frac{d+3}{2}F+2\O+b_3$ and one shows that it is not possible that $B$ can be also expressed as $rF+xP_{m}+yP_{n}+zb_3$ with $r,x,y,z\geq 0$. One has to be careful since the expressions of $P_m$ and $P_n$ in terms of the basis $\{F,\mathcal{O},b_3,b_4\}$, depends on the parity of $m$ and $n$, so one has to consider all the possible cases (i.e. $m$ and $n$ are both even, are both odd, one is odd and one is even). \endproof

We now restrict our attention to the families for which the potential rank jump property is not implied by the presence of a second genus 1 fibration, i.e. the ones in the list provided in \cite{GM}.

\begin{proposition}
The K3 surfaces whose Picard lattice is isometric to one of the lattices in Table \ref{table 1} admit a unique elliptic fibration which has the potential rank jump property. 
\begin{table}[h!]
$$\begin{array}{|c|c|c|c|}
\hline
&&&\\

U\oplus \left[\begin{array}{rr}-6&-1\\-1&-6\end{array}\right]
&
U\oplus \left[\begin{array}{rr}-4&0\\0&-10\end{array}\right]
&
 U\oplus \left[\begin{array}{rr}-4&0\\0&-6\end{array}\right]
&
U\oplus \left[\begin{array}{rr}-2&-1\\-1&-10\end{array}\right]
\\
&&&\\
U\oplus \left[\begin{array}{rr}-2&-1\\-1&-6\end{array}\right]
&
U\oplus \left[\begin{array}{rr}-2&-1\\-1&-4\end{array}\right]
&
U\oplus \left[\begin{array}{rr}-2&1\\1&-2\end{array}\right]
&
U\oplus \left[\begin{array}{rr}-2&0\\0&-2\end{array}\right]\\
&&&\\
\hline\end{array}
$$

\end{table}
\begin{table}[h!]
\caption{Intersection form for Picard lattices of rank 4 of K3s with a unique elliptic fibration known to have the potentially rank jump property}
\label{table 1}
\end{table}
\end{proposition}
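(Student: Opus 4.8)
\emph{Overview.} The plan is to combine the classification of \cite{GM} with the rank-jump results of Section \ref{section: rank jump}. That each of these eight K3 surfaces carries a unique elliptic fibration is part of the classification in the proof of \cite[Theorem 4.5]{GM}, where all eight lattices appear on the list of rank-$4$ Picard lattices of K3 surfaces with a unique genus $1$ fibration admitting a section; so it remains only to prove the potential rank jump property. Writing $\Pic(X)\simeq U\oplus\Gamma$ with $\Gamma:=U^{\perp}$ the rank-$2$ negative definite summand, and fixing $F,\mathcal{O}$ spanning $U$ (so that $\pi=\varphi_{|F|}$), I would split the eight lattices according to whether $\Gamma$ is generated by roots, i.e.\ according to the trichotomy of Subsection \ref{seubsect: codimension 1 subfamilies}.

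\emph{The two root-generated lattices} $U\oplus\left[\begin{smallmatrix}-2&1\\1&-2\end{smallmatrix}\right]$ \emph{and} $U\oplus\left[\begin{smallmatrix}-2&0\\0&-2\end{smallmatrix}\right]$. Here case c) of Subsection \ref{seubsect: codimension 1 subfamilies} applies: $\pi$ has an $I_{3}$ fibre, resp.\ two $I_{2}$ fibres, and trivial Mordell--Weil group over $\bar k$, hence over every field of definition (there being no torsion sections by Lemma \ref{lemma: e.f. with small rho}). A short computation with the intersection form on $U\oplus\Gamma$ shows moreover that $\pi$ admits no smooth rational or genus $1$ multisection of \emph{any} degree, so the constructions of Sections \ref{subsec:bisections lattice}--\ref{section: higher degree} do not apply. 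Instead I would argue exactly as in Theorem \ref{teorema:aut_finito}: since $X$ carries a genus $1$ fibration, \cite[Theorem 1.1]{BT} gives a finite extension $l/k$ over which $X(l)$ is Zariski dense; as the Mordell--Weil rank over $l$ is $0$ and the torsion points on the fibres lie on finitely many multisections, density forces infinitely many $t\in\mathbb{P}^{1}(l)$ with $\mathrm{rank}\,X_{t}(l)>0$, so $\mathcal{R}(X,\pi,l)$ is infinite.

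\emph{The remaining six lattices.} For these $\Gamma$ is either root-free (cases a), $\mathrm{rank}\,\MW=2$) or has root lattice $A_{1}$ (cases b), $\mathrm{rank}\,\MW=1$). For each I would pick a primitive $b_{3}\in\Gamma$ with $b_{3}^{2}=-2d$, $d$ odd and $d\geq 5$, so that $\Lambda_{d}=\langle F,\mathcal{O},b_{3}\rangle$ embeds primitively in $\Pic(X)$ and $X$ becomes a codimension-one specialization of the generic members of $\mathcal{L}_{d}$; one checks case by case that such a $b_{3}$ exists (for the three rank-$1$ lattices one must additionally take $b_{3}\cdot r\in\{0,1,2\}$, where $r$ is the root, so that the bisection below meets both components of the $I_{2}$ fibre non-negatively). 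By Theorem \ref{theorem:Picard3} the generic member of $\mathcal{L}_{d}$ then carries the salient rational bisection $B=\tfrac{d+3}{2}F+2\mathcal{O}+b_{3}$, and by Remark \ref{rem: non splitting of the bisection} together with the proof of Theorem \ref{thm: rank jump odd d} it suffices to verify that, for our choice of $b_{3}$, the class $B$ is still represented by an irreducible curve on $X$. I would establish this exactly as in the proof of Corollary \ref{cor: rank jump general rank 4}: assuming $B=rF+\sum(\text{sections})+\sum(\text{fibre components})$ with non-negative coefficients, read off $BF=2$ and the coefficients of $\mathcal{O}$ and of the generators of $\Gamma$, substitute the explicit section classes of Subsection \ref{seubsect: codimension 1 subfamilies} (cases a) and b)), and derive that the coefficient $r$ of $F$ is forced to be negative, a contradiction. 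Lemma \ref{lem:indep_multisections} and Corollary \ref{cor: rank jump} then yield the potential rank jump property. (When a primitive $b_{3}$ with even $d\geq 4$ is also available one may instead use the genus $1$ bisection and Theorem \ref{thm: rank jump_even}.)

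\emph{Main obstacle.} The delicate step is the non-splitting verification. Once the specialization produces reducible fibres, $\Pic(X)$ acquires infinitely many new effective $(-2)$-classes — the sections of positive Mordell--Weil rank together with the fibre components — and one must rule out all of them as components of $B$; this is what pins down the admissible $b_{3}$, in particular the constraint $b_{3}\cdot r\in\{0,1,2\}$ in cases b), and it is also what makes the two root-generated lattices qualitatively different: there no choice of $d$ of the required type is available, equivalently (by Corollary \ref{cor: no multisections for low d}) the relevant low-genus multisection simply does not exist, which is precisely why those two cases must be routed through \cite{BT} rather than through an explicit bisection.
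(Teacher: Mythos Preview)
Your proposal is correct and follows essentially the same approach as the paper: both split according to the trichotomy of Subsection \ref{seubsect: codimension 1 subfamilies}, dispose of the two root-generated cases via \cite{BT} as in Theorem \ref{teorema:aut_finito}, and for the remaining six lattices produce a rational bisection $B=\tfrac{d+3}{2}F+2\mathcal{O}+b_{3}$ from a suitable primitive $b_{3}\in\Gamma$ of square $-2d$ (odd $d\geq5$), checking irreducibility by the negative-fibre-coefficient contradiction of Corollary \ref{cor: rank jump general rank 4}. The paper merely makes the case-by-case verification explicit---in case a) it performs base changes such as $\left[\begin{smallmatrix}-6&-1\\-1&-6\end{smallmatrix}\right]\simeq\left[\begin{smallmatrix}-10&-5\\-5&-6\end{smallmatrix}\right]$ and $\left[\begin{smallmatrix}-4&0\\0&-6\end{smallmatrix}\right]\simeq\left[\begin{smallmatrix}-10&-4\\-4&-4\end{smallmatrix}\right]$ to reach $\alpha=5$, and in case b) it exhibits the vectors $b_{3}-4b_{4}$ (giving $d=29,45$) and $-b_{4}$ (giving $d=5$)---and notes the extra bookkeeping that in case b) one must treat even and odd section indices separately.

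One small slip: your parenthetical alternative of using a smooth genus $1$ bisection when an even $d\geq4$ is available is vacuous for these eight lattices, since such a bisection would define a second genus $1$ fibration on $X$, contradicting the uniqueness guaranteed by \cite{GM}.
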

\proof All the K3 surfaces in the statement admit a unique elliptic fibration by \cite{BM} and have Picard number 4. We now show that in all the cases but the last two, they are special members of the families $\mathcal{L}_d$ with $d\geq 5$ and $d\equiv 1\mod 2$ and that their elliptic fibration admits a bisection $B$ that satisfies Remark \ref{rem: non splitting of the bisection}.  This  proves the statement for all the cases but the last two, which will be considered at the end of the proof.

We consider three different situations, according to the rank of the Mordell--Weil group.

{\bf First case: the Mordell--Weil group has rank 2 (case $a)$)}. This assumption corresponds to the first three lattices in the Table \ref{table 1}.

We first identify an effective class $B$ such that $BF=2$ and $B^2=-2$, and then we prove that it is supported on  an irreducible curve (in particular it is not the sum of two sections and possibly some fibers). This guarantees that it indeed corresponds to an irreducible bisection and satisfies Remark \ref{rem: non splitting of the bisection}.

For all the lattices considered here, one can prove that
there exists a base $\{b_3, b_4\}$ of $\Gamma$ such that the bilinear form is $\left[\begin{array}{rr}-2\alpha&\beta\\ \beta&-2\gamma\end{array}\right]$, with $\gamma>0$, and $\alpha\geq 5$, $\alpha\equiv 1\mod 2$. In particular $\left[\begin{array}{cc}-6&-1\\ -1&-6\end{array}\right]\simeq \left[\begin{array}{cc}-10&-5\\ -5&-6\end{array}\right]$ and 
$\left[\begin{array}{rr}-4&0\\ 0&-6\end{array}\right]\simeq \left[\begin{array}{cc}-10&-4\\ -4&-4\end{array}\right]$.

Now one argues as in Corollary \ref{cor: rank jump general rank 4}, by considering the element $B:=\frac{\alpha+3}{2}F+2\O+b_3$:
if $B$ were a reducible curve which is the union of two sections, say $P_{m,n}$ and $P_{p,q}$, and possibly some components of fibers, then there would exist three non-negative integers $x,y,z$ such that $B=xF+yP_{m,n}+zP_{p,q}$ in $\Pic(X)$. Considering the previous equations of $B$ and $P_{m,n}$ (see \eqref{eq: sections rkMw=2}), one obtains $$xF+y\left((\alpha m^2-\beta m n+\gamma n^2)F+\O+mb_3+nb_4\right)+z\left((\alpha p^2-\beta p q+\gamma q^2)F+\O+pb_3+qb_4\right)=\frac{\alpha+3}{2}F+2\O+b_3$$
which implies $y+z=2$, $ym+zp=1$, $yn+zq=0$, by comparing the coefficients of $\O$, $b_3$ and $b_4$ respectively.
In particular, one obtains that $y=z=1$, $p=1-m$, $q=-n$ and one concludes since  $x=\frac{\alpha+3}{2}-\left(2\alpha m^2-2\beta m n+2\gamma n^2+\alpha-2m\alpha+\beta n\right)$ is a negative number for any possible choice of $(m,n)\in\Z\times \Z$. Therefore $B$ is irreducible.

{\bf Second case: the Mordell Weil group has rank 1 (case $b)$)}. The cases which appear in Table 1 are $\Gamma=\left[\begin{array}{cc}-2&-1\\-1&-2\gamma\end{array}\right]$ with $\gamma=2,3,5$. For $\gamma=2,3,5$, consider the class $B=16F+2\O+b_3-4b_4$, $B=24F+2\O+b_3-4b_4$ and $B=4F+2\O-b_4$, respectively. It has the required properties (i.e. $BF=2$, $B^2=-2$) and cannot be written as $rF+xb_3+yP_{a}+zP_b$ with $y+z=2$ and $r$ and $x$ non negative, where $b_3$ is the class of an irreducible component of the reducible fiber and $P_a$ and $P_b$ are sections. 

{\bf Third cases: the Mordell Weil group is trivial (case $c)$).} This corresponds to the last two cases in Table \ref{table 1}. The fibration of each of these surfaces  has no sections of infinite order, hence it is known to have the potential rank jump property, as mentioned in Theorem \ref{teorema:aut_finito}.
\endproof

\begin{remark}
We now consider the four rank 4 lattices listed in \cite{GM}, but not contained in Table 1.

If $\Gamma\simeq \left[\begin{array}{rr}-4&-2\\-2&-4\end{array}\right]$, then it is not possible to find in $\Gamma$ a class with self-intersection $-2d$ such that $d$ is odd, hence it is not possible to show the existence of a smooth rational bisection with the same method applied before. Moreover, since \cite{GM} guarantees that there are no more than one genus 1 fibration on the surface, the elliptic fibration does not admit a smooth genus 1 bisection. \\

If $\Gamma\simeq\left[\begin{array}{rr}-2&0\\0&-4\end{array}\right]$ in order to find a rational bisection, we need a class $B=xF+y\O+z b_3+wb_4$ such that $B^2=-2$, $BF=2$, $0\leq Bb_3\leq 2$ (since $b_3$ is identified with the class of an irreducible component of a reducible fiber). This forces $y=2$, $z=-1$ and hence $x=2+w^2$. So $B=(2+w^2)F+2\O+b_3+wb_4$, which is equal to $2F+b_3+2P_{w/2}$ if $w$ is even and to $b_3+P_{v}+P_{v-1}$ if $w=2v-1$. In both the cases $B$ corresponds to the union of sections and components of fibers, and so it does not correspond to a smooth rational bisection. So for this choice of $\Gamma$ the fibration does not admit a smooth bisection of genus either 0 or 1.

If $\Gamma\simeq\left[\begin{array}{rr}-4&\mu\\\mu&-2\lambda\end{array}\right]$ with $(\mu,\lambda)=(-2,3),(-1,2)$ we are not able to find a convenient $B$ which represents an irreducible smooth rational bisection of the fibration (but we are not claiming that such $B$ does not exist). 

Finally, we observe that at least for some of the lattices in the Table \ref{table 1}, it is not possible to find a smooth rational bisection $B$ such that $B\mathcal{O}\equiv 1\mod 2$, and hence to prove by this strategy the rank jump property, and not just the potential rank jump property. This is the case, for example, of the lattice $U\oplus \langle -4\rangle\oplus \langle -6\rangle$; indeed a class $B$ such that $B^2=-2$, $BF=2$ and $\O B\mathcal\equiv 1 \mod 2$ is necessarily of the type $B_{k,h}=(4k^2+6(h^2+h)+3)F+2\mathcal{O}+2kb_3+(2h+1)b_4$, $k,h\in\mathbb{Z}$, but then $B_{k,h}=P_{k,h}+P_{k,h+1}$, so it splits in the union of two sections.
\end{remark}

We observe that the similar strategies can be applied to consider subfamilies of $\mathcal{L}_d$ with higher codimension.

\section{Examples}\label{section: examples}

In this section, we describe several examples for which we prove the potential rank jump property and/or the rank jump property on very explicit K3 surfaces with Picard number 3, for which we give an equation. 
We give two examples of elliptic K3 surfaces with Picard rank 3 over $k=\mathbb{Q}(\sqrt{5})$, realized as a double cover of a nodal sextic with principal tangents defined only over a degree 2 extension. In Example \ref{ex:picard 3 over Q}, the bisection above the node is $\mathbb{P}^1_{\mathbb{Q}}$. In Example \ref{ex:picard3overquadratic}, it is a conic without $k$-points. Therefore, by the proof of Theorem \ref{thm: rank jump odd d}, the former admits the rank jump property over $k$, while the latter admits the property after a suitable degree 2 extension of $k$. 

In the last example we use a trisection to prove the  rank jump property on another K3 surface of Picard number 3.

In all these examples the geometric construction provides immediately the field of definition of the rational multisection and the field where it has rational points.

\begin{example}\label{ex:picard 3 over Q}
Let $C$ be the singular plane sextic curve given by the vanishing of the following polynomial:
\begin{align*}
f:=2x^4y^2  + 8x^4z^2 + x^3y^3 + x^3yz^2 + x^3z^3  + x^2y^4 + 5x^2y^2z^2 + 7x^2z^4 + xy^5+xy^4z +xy^3z^2 + 6xz^5 + 5y^4z^2.
\end{align*}

Then $C$ has a unique singularity, namely $p=(1:0:0)$, which is of type $A_1$. It admits a bitangent line $L: x=0$. As this line intersects the curve in only 2 points, namely $(0:1:0)$ and $(0:0:1)$, with intersection multiplicities 2 and 4, respectively, it can be considered as a degenerate tritangent line. In particular, it splits in the double cover $g:X\ra \mathbb{P}^2$ branched on $C$. Indeed, if a curve $D$ on a surface $Y$ intersects the branch locus of a double cover $g:X\ra Y$ with an with even multiplicity in a point $q$, then its inverse image $g^{-1}(D)\subset X$ is singular in $g^{-1}(q)$ but does not ramify in $g^{-1}(q)$ (because of the splitting of the local equation of the double cover). So, for the double cover $g:X\ra \mathbb{P}^2$ branched on $C$, the inverse image of the line $L$ does not contain ramification points but has two singular points (which are  $g^{-1}(0:1:0)$ and $g^{-1}(0:0:1)$). In particular, since $L$ is rational and so simply connected, the inverse image $g^{-1}(L)$ consists of two rational curves meeting in two points (one with multiplicity 2).

Let $X$ be the K3 double cover of $\mathbb{P}^2$ branched over $C$, then the geometric Picard number of $X$ is at least 3.  Indeed, its Picard lattice contains $B$, the rational curve above the exceptional divisor given by the desingularization of $p$, a genus 1 class $F$, given by the pull-back of any line through $p$, and the class $\O$ given by one of the two irreducible rational curves above the line $L$. Since $p\not\in L$, their intersection matrix is as follows
$$
\begin{pmatrix}-2&2&0\\
	2&0&1\\
	0&1&-2
	\end{pmatrix}
$$
and, with the same notation of Section \ref{subsec: Frat x}, we have $x=B\O=0$, which implies $d=5$, i.e., the Picard group contains a lattice isometric to $\Lambda_{5}=U\oplus \langle -10\rangle$. 

We now prove that $\rho(X)=3$, which will imply that $\Pic(X)\simeq \Lambda_{5}$ and that $L$ is the unique line in $\mathbb{P}^2$ which splits in the double cover.

 Our approach to determine the Picard number of $X$ is based on Tate's conjecture (\cite{Tate}). More precisely, we bound the geometric Picard number from above by the number of eigenvalues of the characteristic polynomial of Frobenius, for primes of good reduction, that are roots of unity (see \cite[Corollary 2.3]{RvL} for the statement and \cite[Sec. 4]{EJ} for detailed examples of the approach in a similar setting).

In our case, a computation of the characteristic polynomial of the Frobenius for ordinary primes such as $7, 11, 23, 31$ and 37, done with the help of Magma (\cite{Magma}), yields a factor of degree 18 that is not cyclotomic.  Since the characteristic polynomial of Frobenius has degree $b_2=22$, we obtain $\rho\leq 22-18= 4$.

We claim that $\rho=3$ and verify it by exhibiting a class of a smooth rational curve that does not lift to characteristic 0. 

For the prime $7$, the sextic $C$ has two singularities of type $A_1$, namely $p=(1:0:0)$ and $q=(1:5:1)$. The exceptional divisor above $q$ is a smooth rational curve. So the Picard group for $p=7$ is the lattice $\Lambda_{5}\oplus \langle -2\rangle$, where the last summand is spanned by the exceptional divisor over $q$. If this curve lifted over $\mathbb{Q}$, then the Picard group of $X$ over $\Q$ would be $\Lambda_{5}\oplus \langle -2\rangle$ and in particular the elliptic fibration induced by the pencil of lines through $p$ would have a fiber of type $I_2$. But one can directly check on the equation of the fibration (see, \ref{eq: elliptic fib}), that it does not admit any reducible fiber over $\Q$. Therefore the class which is defined for $p=7$ as exceptional divisor over $q$ does not lift over $\Q$ and this implies that $\rho=3$ over $\Q$.

Finally, we verify that the smooth bisection $B$ is saliently ramified. Recall that the unique elliptic fibration on $X$ is given by the pull-back of lines through $p$. Moreover, the restriction map $\pi_B: B\rightarrow \mathbb{P}^1$ is branched over the fibers that are pull-backs of the two lines that are the principal tangents of $C$ at the node $p$, namely $y=2iz$ and $y=-2iz$. Since these intersect $C$ transversally outside $p$, they pull-back to smooth fibers of $\pi$. 

To be more explicit, the equation of the K3 surface $X$ is $w^2=f$ and the pencil of lines through $p$ is given by $y=tz$, so the equation of the genus 1 fibration with fiber $F$ is 
\begin{equation}\label{eq: elliptic fib}
w^2=x^4(2t^2  + 8) + x^3z(t^3 + 3 + 1)  + x^2z^2(t^4 + 5t^2 + 7) + xz^3(t^5+t^4 +t^3 + 6)+ 5t^4z^4.\end{equation}
It intersects the inverse image of the line $x=0$ in the curve $w^2=5t^4z^4$ and if $\sqrt{5}$ exists in the field of definition, this curve splits in the two curves $w\pm\sqrt{5}z^2t^2=0$ and thus corresponds to the two sections $(x(t),z(t),w(t))=(0,1,\pm\sqrt{5}t^2)$. Therefore, the field of definition of the elliptic fibration (with section) is $k=\Q(\sqrt{5})$. 

The elliptic fibration has no reducible fibers, as can be checked by the computation of its discriminant.

The bisection corresponds to the exceptional divisor over $p$, i.e., to $z=0$ and thus its equation is  $w^2=x^2(2t^2+8)$ which in particular describes a rational curve defined over $\Q$. It intersects the fiber above $t=2$ at the $\mathbb{Q}$-points $(x,z,w)=(1,0,\pm 4)$, so it is isomorphic to $\mathbb{P}^1_{\mathbb{Q}}$ and has infinitely many $\mathbb{Q}$-points, and, therefore, infinitely many $k$-rational points.   

We observe that since the elliptic fibration has infinitely many sections, there are infinitely many irreducible curves in $\mathbb{P}^2$ that split in the double cover. The section $S_n$ comes from the splitting of a curve of degree $10n^2-10n+1$ with a singular point of multiplicity $10n^2+10n$ in $p$.
\end{example}

\begin{example}\label{ex:picard3overquadratic}
We consider a slight modification of the coefficients of $x^4y^2$ and $x^4z^2$ in the sextic in Example \ref{ex:picard 3 over Q}. Let $C$ be the singular plane sextic curve given by the vanishing of the following polynomial:
\begin{align*}
f:=-5x^4y^2  -6x^4z^2 + x^3y^3 + x^3yz^2 + x^3z^3  + x^2y^4 + 5x^2y^2z^2 + 7x^2z^4 + xy^5+xy^4z +xy^3z^2 + 6xz^5 + 5y^4z^2.
\end{align*}

Then, as before, $C$ has a unique singularity, namely $p=(1:0:0)$, which is of type $A_1$. It admits a degenerate tritangent line, namely $L: x=0$, which turns out to be unique, once one proved as above that $\rho=3$. 

The same calculations as in Example \ref{ex:picard 3 over Q} can be used to show that the Picard group is isometric to $\Lambda_5$. The upper bound on the geometric Picard number can be found by inspecting ordinary primes such as $p=7,13,61$. In fact, this K3 surface has the same reduction modulo 7 as in Example \ref{ex:picard 3 over Q}, so the same argument applies to show that the Picard group has indeed rank 3. 

The bisection over $k$ which corresponds to the exceptional divisor above $p$ has equation $w^2=x^2(-5t^2-6)$, which clearly admits no $\mathbb{R}$-points and, in particular, it has not $k$-points (but it has $l$-points where $l$ is a field extension of degree 2 of $k$).
\end{example}

The next example is concerned with a $\Lambda_d$-polarized K3 surface, with an even $d$. In it, the rank jump property is realized over the field of definition of the elliptic fibration and is obtained by using a trisection (and not as above a bisection) as the ones described in Section \ref{section: higher degree}. It is inspired by \cite[Theorem 3.3]{NijghvL}.

\begin{example}\label{ex:even_d}
Let $X$ be the quartic K3 surface given by 
\begin{equation}
q_1f_1+q_2f_2+q_3f_3=0
\end{equation}
\end{example}
where $f_1, f_2, f_3$ are polynomials of degree 2 in $\mathbb{Z}[x,y,z,w]$ given as follows:
\[
f_1:=x^2-xy;\, f_2:=z^2-zw; \, f_3:= yz-xw; \
\]

and $q_1, q_2, q_3$ are also of degree 2  in $\mathbb{Z}[x,y,z,w]$ and satisfy:
$$q_1 \equiv y^2 +xw+xy +yw \mod 2$$
$$q_2 \equiv xy +yz +yw+zw \mod 2,$$
$$q_3 \equiv xz +xw+z^2 +w^2 +y^2 \mod 2,$$
and,

$$q_1 \equiv y^2 +xw+2xz +2yz \mod 3,$$
$$q_2 \equiv xy +yz +yw+x^2 +2xw \mod 3,$$
$$q_3 \equiv xz +xw+z^2 +w^2 +2xy +2yw \mod 3.$$
Then, by \cite[Thm. 3.3]{NijghvL}, $X$ is a K3 surface with Picard number 3.  Its Picard lattice is generated by the classes of $H, L_0, L_1$, where $H$ corresponds to a hyperplane section, $L_0$ is the class of the line given by  $x=z=0$ and $L_1$, the class of $x-y=z-w=0$. 
Moreover, it is $U\oplus \langle -20 \rangle$-polarized. In fact, the divisor $D=6H-7L_0+3L_1$ is perpendicular to a copy of $U$ and has self-intersection $-20$. 

The surface $X$ admits two evident elliptic fibrations. Indeed, the residual intersection of a hyperplane through any line is a genus 1 curve in $X$. Therefore, the  pencil of hyperplanes through one of the lines $L_0$ or $L_1$ yields an elliptic fibration, which we denote by $\pi_0, \pi_1$, in reference to taking hyperplanes through $L_0$, or $L_1$, respectively, in what follows.

As a general hyperplane through $L_0$ (resp. $L_1$) does not contain $L_1$ (resp. $L_0$), the line $L_1$ (resp. $L_0$) corresponds to a section of the elliptic fibration $\pi_0$ (resp. $\pi_1$). On the other hand, the line $L_0$ (resp. $L_1$) is a trisection of $\pi_0$ (resp. $\pi_1$). From the construction, the fibers of $\pi_0$ are smooth genus 1 bisections of $\pi_1$ and vice versa.

We therefore have an example of elliptic fibrations that admit a rational trisection and a (family of) genus 1 bisection(s). 

We claim that $\pi_0$ and $\pi_1$ have the rank jump property over $\mathbb{Q}$. To verify this claim, it suffices to show that the rational trisection of each fibration is saliently ramified. Alternatively, one can show that at least one of the infinitely many saliently ramified bisections of genus 1 has infinitely many $\mathbb{Q}$-points.

Let us choose $q_i$ as follows
 $$q_1 = y^2 +xw+3xy +3yw +2xz+2yz,$$
$$q_2 = xy +yz +yw+4x^2+2xw+3zw,$$
$$q_3= xz +xw+z^2 +w^2 +3y^2 +2xy+2yw.$$
Consider the line $L_0$ and the pencil of planes $x=tz$ through that line. 
The intersection of the quartic with $x=tz$ contains the line $L_0$; the residual cubic is the fiber of the elliptic fibration $\pi_0$ over the point $t$. One can directly check that for $t=0$ the intersection between the line and the residual cubic consists of the points $(0:0:0:1)$ and $(0:1:0:0)$ and the first one has multiplicity two. In particular, the trisection (which corresponds to $L_0$) branches on the fiber over $t=0$. Nevertheless, one can also check that the intersection $X\cap \{x=0\}$ consists of the line $L_0$ and of a smooth cubic, so the trisection $L_0$ branches over a smooth fiber of the fibration, i.e. it is saliently ramified.

\section*{Acknowledgements}
It is our pleasure to thank the referee for their helpful comments and for pointing out a mistake in the previous version of the paper, Bert van Geemen for reading and commenting on our manuscript, and Wim Nijgh for pointing out the quartic K3 that appears in Example \ref{ex:even_d} and discussing details about it.
The second author also would like to thank Brendan Hassett for a discussion on the ramification of trisections on elliptic K3 surfaces and the \textit{Institute for Advanced Study} for their hospitality and stimulating atmosphere. Part of her work on this material is supported by the NWO XL grant \textit{Rational points: new dimensions}, the James D. Wolfensohn Fund and the National Science Foundation under Grant No. DMS-1926686. The first author is member of the INDAM-GNSAGA.

\end{document}